\begin{document}

\newcommand{\C}{{\mathbb{C}}}
\newcommand{\R}{{\mathbb{R}}}
\newcommand{\Z}{{\mathbb{Z}}}
\renewcommand{\l}{\left}
\renewcommand{\r}{\right}
\newcommand{\lap}{\bigtriangleup}
\newcommand{\cptsubset}{\subset\subset}
\newcommand{\boxb}{\square_b}
\newcommand{\dbarb}{\overline{\partial}_b}
\newcommand{\Lbar}{\overline{L}}
\newcommand{\sL}{\mathcal{L}}
\newcommand{\sLt}{\widetilde{\sL}}
\newcommand{\LpN}[3]{\l\| #2 \r\|_{L^{#1}\l( #3 \r)}}
\newcommand{\LpOpN}[3]{\l\| #2 \r\|_{L^{#1}\l(M\r)\circlearrowleft}}
\newcommand{\LpsN}[4]{\l\| #3 \r\|_{L_{#2}^{#1}\l( #4 \r)}}
\newcommand{\LtaN}[2]{\l\| #2\r\|_{L^2_{#1}\l( \R\r)}}
\newcommand{\LtaNloc}[2]{\l\| #2\r\|_{L^2_{#1,\mathrm{sloc}}}}
\newcommand{\rp}[2]{\rho\l( #1,#2 \r)}
\newcommand{\dip}[2]{d\l( #1,#2 \r)}
\newcommand{\boxbt}{\widetilde{\square}_b}
\newcommand{\bt}{\widetilde{b}}
\newcommand{\psit}{\widetilde{\psi}}
\newcommand{\Lambdat}{\widetilde{\Lambda}}
\newcommand{\supp}{\mathrm{supp}}
\renewcommand{\Re}{\mathrm{Re}}
\renewcommand{\Im}{\mathrm{Im}}
\newcommand{\ip}[2]{\l< #1, #2 \r>}
\newcommand{\sM}{\mathcal{M}}
\newcommand{\loc}{\mathrm{loc}}

\newtheorem{thm}{Theorem}[section]
\newtheorem{cor}[thm]{Corollary}
\newtheorem{prop}[thm]{Proposition}
\newtheorem{lemma}[thm]{Lemma}
\newtheorem{conj}[thm]{Conjecture}

\theoremstyle{remark}
\newtheorem{rmk}[thm]{Remark}

\theoremstyle{definition}
\newtheorem{defn}[thm]{Definition}

\title{The $\boxb$ Heat Equation and Multipliers via the Wave Equation}
\author{Brian Street}
\date{}

\maketitle
\begin{abstract}
Recently, Nagel and Stein studied the $\boxb$-heat equation,
where $\boxb$ is the Kohn Laplacian on the boundary of a
weakly-pseudoconvex domain of finite type in $\C^2$.  They showed
that the Schwartz kernel of $e^{-t\boxb}$ satisfies good
``off-diagonal'' estimates, while that of $e^{-t\boxb}-\pi$
satisfies good ``on-diagonal'' estimates, where $\pi$ denotes
the Szeg\"o projection.  We offer a simple
proof of these results, which easily generalizes to other,
similar situations.  Our methods involve adapting the well-known
relationship between the heat equation and the finite propagation speed of the wave equation
to this situation.  In addition, we apply these methods
to study multipliers of the form $m\l( \boxb\r)$.  In particular,
we show that $m\l( \boxb\r)$ is an NIS operator, where
$m$ satisfies an appropriate Mihlin-H\"ormander condition.

\end{abstract}

\section{Introduction}
In \cite{NagelSteinTheBoxbHeatEquationOnPseudoconvexManifoldsOfFiniteType},
Nagel and Stein study the heat operator $e^{-s\boxb}$, where $\boxb$
is the Kohn Laplacian (acting on functions) on the boundary $M$,
of a weakly pseudoconvex domain of finite type in $\C^2$ (or with
$\boxb$ on a polynomial model domain in $\C^2$).  Let $\pi$
be the Szeg\"o projection, $e^{-s\boxbt}=\l(1-\pi\r) e^{-s\boxb}$, and
for any operator $T$, let $K_T$ denote the Schwartz kernel of $T$.
The bounds in \cite{NagelSteinTheBoxbHeatEquationOnPseudoconvexManifoldsOfFiniteType} were in terms of a Carnot-Carath\'eodory distance $\rho$ on $M$
(see Section \ref{SectionGeomOfM}).  In \cite{NagelSteinTheBoxbHeatEquationOnPseudoconvexManifoldsOfFiniteType} it is shown that:
\begin{equation}\label{EqnIntroOffDiag}
\l|K_{e^{-s\boxb}}\l(x,y\r) \r|\lesssim \frac{1}{V\l(x,\rp{x}{y}\r)} \l( \frac{s^N}{s^N+\rp{x}{y}^{2N}} \r)
\end{equation}
for every $N>0$, and,
\begin{equation}\label{EqnIntroOnDiag}
\l|K_{e^{-s\boxbt}}\l(x,y\r) \r| \lesssim \frac{1}{V\l(x,\rp{x}{y}\vee \sqrt{s}\r)}
\end{equation}
with appropriate estimates for the derivatives in each variable as well (see Theorem \ref{ThmMainBounds}).  Here, $V\l(x,\delta\r)$ denotes the volume
of the ball of radius $\delta$ in the $\rho$ metric, centered at $x$.
In an unpublished result of Nagel and M\"uller, using the methods of \cite{JerisonSanchezCalleEstimatesForTheHeatKernelForASumeOfSquares}
the bounds in (\ref{EqnIntroOffDiag}) are improved to:
\begin{equation}\label{EqnIntroOffDiagGaus}
\l|K_{e^{-s\boxb}}\l(x,y\r) \r|\lesssim \frac{1}{V\l(x,\rp{x}{y}\r)} e^{-c\frac{\rp{x}{y}^2}{s}}
\end{equation}
for some $c>0$.  The main insight of \cite{NagelSteinTheBoxbHeatEquationOnPseudoconvexManifoldsOfFiniteType} was that one needs
to prove off diagonal estimates (ie, (\ref{EqnIntroOffDiag}),(\ref{EqnIntroOffDiagGaus}))
for $e^{-s\boxb}$ and on diagonal estimates (ie, (\ref{EqnIntroOnDiag}))
for $e^{-s\boxbt}$.  The main goal of this paper is to reprove these
results, keeping this insight in mind, using well-known methods
for the classical heat equation as can be found, for instance,
in \cite{SikoraRieszTransformGaussianBoundsAndTheMethodOfWaveEquation}.

The novelty of our approach is that we shall use only two estimates
specific to $\boxb$.  Namely,
\begin{itemize}
\item There is a relative fundamental solution $\boxbt^{-1}$ (ie, $\boxb\boxbt^{-1}=\boxbt^{-1}\boxb = 1-\pi$, $\pi\boxbt^{-1}=0=\boxbt^{-1}\pi$) 
which is an NIS operator of order $2$ (see Definition \ref{DefnNisOps}, and Theorem \ref{ThmScaledEst} for the related estimate).
\item The $\boxb$ wave equation has finite propagation speed (see Theorem \ref{ThmFiniteProp}).
\end{itemize}
and the rest of the proofs follows completely formally, from a modified version
of the proofs in \cite{SikoraRieszTransformGaussianBoundsAndTheMethodOfWaveEquation}.  In particular, we do not need any of the new bounds that
were used in \cite{NagelSteinTheBoxbHeatEquationOnPseudoconvexManifoldsOfFiniteType}.

That $\boxbt^{-1}$ is an NIS operator of order $2$ is well known 
(see \cite{NagelRosaySteinWaingerEstimatesForTheBergmanAndSzegoKernels,ChangNagelSteinEstimatesForTheDbarNeumannProblemInPseudoconvexDomainsOfFiniteType}), 
while
the finite propagation speed is a result of Melrose \cite{MelrosePropagationForTheWaveGroupOfAPositiveSubellipticSecondOrderDifferentialOperator}.
%will follow quite easily from
%the smooth metrics of \cite{NagelSteinDifferentiableControlMetricsAndScaledBumpFunctions}.  
Because of this, essentially no new estimates need to be
proven to achieve the main results of this paper:  all of the work is completely formal use
of the spectral theorem.

In fact, one may consider the methods of this paper as a (quite simple) generalization
of the methods in \cite{SikoraRieszTransformGaussianBoundsAndTheMethodOfWaveEquation} where the heat equation $e^{-t\sL}$ is studied for some
positive semi-definite operator $\sL$, whose wave equation
has finite speed of propagation.  The methods in this paper
allow one to consider the case when the $L^2$ kernel of $\sL$
is non-trivial,\footnote{Here, ``trivial'' does not necessarily mean $0$ dimensional.  It could, for instance when working on a compact manifold, mean a finite dimensional space of smooth functions.}
and the Schwartz kernel of the orthogonal projection onto the 
$L^2$ kernel of $\sL$ satisfies appropriate estimates (see Example
\ref{ExampleSik}).

After we study the $\boxb$ heat equation, in Section \ref{SectionMultipliers}
we turn to studying multipliers $m\l( \boxb\r)$.  We show (using
essentially the same methods that we use for the heat equation)
that $m\l(\boxb\r)$ is an NIS operator of order $0$, provided $m$
satisfies an appropriate Mihlin-H\"ormander condition (see
Theorem \ref{ThmMultipliers}).  In addition, we prove $m\l( \boxb\r)$
is bounded on $L^p$ for $m$s that satisfy only a finite
level of smoothness (see Theorem \ref{ThmMultipliersLessSmooth}).

Finally, since we use only the two above basic assumptions
on $\boxb$, in Section \ref{SectionOtherExamples} we present
a few other examples where the same methods yield analogous results.
We hope this will convince the reader that these methods
are easily adapted to other situations.

All of the results in this paper are well known when $\boxb$ is
replaced by, for instance, the sublaplacian on a compact manifold,
defined in terms of vector fields $X_1,\ldots, X_m$ which
satisfy H\"ormander's condition, or the sublaplacian on
a stratified group.  See Examples \ref{ExampleCptMfld} and \ref{ExampleQuasiHomog}, and the references there.
%The only real difference here is that we deal with the $L^2$ kernel of $\boxb$, which
%amounts to dealing with one extra term in all of our computations.

\section{Setup, Notation, and Statement of Results}
%Attempting to axiomitize the assumptions necessary for our proof in such
%a way as to accomodate all of the examples we are interested in (see Section ??)
%seems to be impossible without needlessly complicating matters.
%For this reason, we will exhibit the proof in the simpliest non-trivial case
%(discussed in this section; it is one of the cases covered in \cite{NagelSteinTheBoxbHeatEquationOnPseudoconvexManifoldsOfFiniteType}) which will contain all the essential ideas.  
%In Section ?? we will dicuss how to adapt this proof
%to other situations.  
Despite the fact that the methods of this paper work in more general
situations (see Section \ref{SectionOtherExamples}; in particular,
Examples \ref{ExampleSik} and \ref{ExamplePseudoConvex})
it seems difficult to devise an appropriate abstract setting
in which the entirety of this paper will go through, without
needlessly complicating matters (in particular,
the obvious generalization (Example \ref{ExampleCptMfld}) doesn't contain Example \ref{ExamplePseudoConvex}).
Because of this, we prove our results, in detail, in the simplest setting (as discussed
in the introduction) and mention a few other settings in which these
methods work, with only minor modifications, in Section \ref{SectionOtherExamples}.
Throughout the paper we will use $A\lesssim B$ to
denote $A\leq CB$ where $C$ is a constant independent of any
relevant parameters.

Let $M\subset \C^2$ be a $C^\infty$ pseudoconvex hypersurface 
and assume that $M$ is the boundary of a bounded
pseudoconvex domain $\Omega\subset \C^2$ of finite type $m$.  Let $\boxb$ denote
the Kohn-Laplacian acting on functions in $C^\infty\l(M\r)$.  Let
$\pi:L^2\l( M\r)\rightarrow L^2\l( M\r)$ be the orthogonal
projection on to the $L^2$ kernel of $\boxb$ (ie, the Szeg\"o
projection).  

We write $L^p_s\l(M\r)$ for the usual (isotropic) $L^p$ Sobolev spaces
of order $s$.  Also, if $\zeta,\zeta'\in C_0^\infty\l( M\r)$ we write
$\zeta\prec \zeta'$ if $\zeta'\equiv 1$ on the support of $\zeta$.
Finally, for an operator $T:C_0^\infty\l( M\r) \rightarrow C_0^\infty\l(M\r)'$, we write $K_T\l(x,y\r)\in C_0^\infty\l( M\times M\r)'$ for
the Schwartz kernel of the operator $T$.

	\subsection{Geometry of M}\label{SectionGeomOfM}
		Choose real vector fields $X_1,X_2$ so that we can identify
$\dbarb f$ with $$\l( X_1+iX_2\r) f$$ by identifying functions and
$(0,1)$ forms, see \cite{NagelSteinTheBoxbHeatEquationOnPseudoconvexManifoldsOfFiniteType} for details.
That $\Omega$ is of finite type $m$ means that $X_1$, $X_2$ and their
commutators up to order $m$ span the tangent space $TM$ at each point (ie,
$X_1$ and $X_2$ satisfy H\"ormander's condition).  There
is a natural metric defined in terms of these vector fields, called
the control metric, or Carnot-Carath\'eodory metric, and is defined by:
\begin{equation*}
\begin{split}
\rp{x}{y} = \inf\bigg\{ T>0 : &\exists \gamma:\l[0,T\r]\rightarrow M, \quad\gamma \: \mathrm{piecewise}\: C^1,\\
& \gamma\l(0\r) =x, \gamma\l( T\r) =y,  \\
& \gamma'\l(t\r)= c_1\l(t\r) X_1 + c_2\l(t\r) X_2\text{ for a.e. $t$,}\\
& \text{and } \l| c_1\l(t\r) \r|^2 + \l|c_2\l(t\r) \r|^2\leq 1 \bigg\}
\end{split}
\end{equation*}
We define $B\l( x, \delta\r)=\l\{ y\in M: \rp{x}{y}<\delta \r\}$ and
let $V\l( x,\delta\r)$ denote the volume of $B\l( x, \delta\r)$.
The following result is contained in \cite{NagelSteinWaingerBallsAndMetricsDefinedByVectorFields}:
\begin{prop}\label{PropHomogType}
There is a $Q$ such that $V\l( x, \gamma \delta\r) \lesssim \gamma^Q V\l( x, \delta\r)$ for all $\gamma\geq 1$.  For the rest of the paper $Q$ will denote
this number.  In addition, there is a $q>0$, and a $\delta_0>0$ such that
for all $\delta\leq \delta_0$ and all $\gamma<1$, we have
$V\l(x,\gamma\delta\r) \lesssim \gamma^q V\l(x,\delta\r)$.
%Moreover, there is a $q>0$ and $\delta_0, \delta_1>0$ such that for all $\gamma\leq \delta_0$,
%$$V\l( x, \gamma\r) \gtrsim \gamma^q V\l( x, \delta_1\r)$$
%There exists $Q,q>0$ such that $\epsilon^q V\l( x, \delta\r) \gtrsim V\l( x, \epsilon\delta\r) \gtrsim \epsilon^Q V\l( x, \delta\r)$ for all $1\geq \epsilon \geq 0$.
%, and $\epsilon^{-q} V\l( x, \delta\r) \lesssim V\l( x, \frac{1}{\epsilon}\delta\r) \lesssim \epsilon^{-Q} \delta$.
\end{prop}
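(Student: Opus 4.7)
The plan is to invoke the explicit formula of Nagel, Stein, and Wainger for the volume of a Carnot--Carath\'eodory ball and then read the two inequalities off of it. In the present setting of two vector fields $X_1, X_2$ on the $3$-manifold $M$ satisfying H\"ormander's condition with type $m$, that formula writes
\[
V(x, \delta) \approx \sum_{k = d_0}^{d_1} \Lambda_k(x)\, \delta^k,
\]
a finite sum over integers $d_0 \leq k \leq d_1$ with nonnegative coefficients $\Lambda_k(x)$ expressed in terms of determinants of iterated commutators of $X_1, X_2$ at $x$. The integers $d_0, d_1$ are independent of $x$ (with $d_1$ controlled in terms of the type $m$), and since $M$ is compact the implied constants are uniform in $x$.

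Given this formula, I would set $Q = d_1$ and observe that for $\gamma \geq 1$ and every $k \leq Q$ one has $\gamma^k \leq \gamma^Q$, so
\[
V(x, \gamma \delta) \approx \sum_{k} \Lambda_k(x) \gamma^k \delta^k \leq \gamma^Q \sum_{k} \Lambda_k(x) \delta^k \approx \gamma^Q V(x, \delta).
\]
For the reverse-doubling statement I would set $q = d_0$; for $\gamma < 1$ and every $k \geq q$ one has $\gamma^k \leq \gamma^q$, so the same computation yields $V(x, \gamma \delta) \lesssim \gamma^q V(x, \delta)$. The restriction $\delta \leq \delta_0$ is forced because the polynomial description saturates once $B(x, \delta)$ fills all of $M$: beyond that scale the volume stops growing in $\delta$ and no reverse estimate with positive $q$ can hold. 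Taking $\delta_0$ below the diameter of $M$, which is a fixed finite constant by compactness, keeps the formula in force uniformly in $x$.

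The real content is the volume formula itself, which is the substantive contribution of the cited paper and which I would invoke rather than reprove. Once granted, the proposition is simply the observation that a polynomial with nonnegative coefficients over a finite range of exponents is controlled from above by its highest-degree term on $\gamma \geq 1$ and by its lowest-degree term on $\gamma \leq 1$.
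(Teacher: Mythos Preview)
Your proposal is correct and matches the paper's approach: the paper gives no argument at all, simply citing Nagel--Stein--Wainger, and your elaboration via their polynomial volume formula $V(x,\delta)\approx\sum_k\Lambda_k(x)\delta^k$ is exactly how one extracts the two inequalities from that reference. The only minor omission is that applying the formula to $V(x,\gamma\delta)$ in the doubling estimate tacitly assumes $\gamma\delta\le\delta_0$ as well; the remaining range is handled trivially by compactness of $M$, since then $V(x,\gamma\delta)\le |M|\approx V(x,\delta_0)\lesssim(\delta_0/\delta)^Q V(x,\delta)\le\gamma^Q V(x,\delta)$.
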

\begin{rmk}
It is not hard to see, from the results of \cite{NagelSteinWaingerBallsAndMetricsDefinedByVectorFields}, that $Q=m+2$, where $M$ was of finite type $m$.
\end{rmk}

We write $D=\l( X_1,X_2\r)$ and use order multi-index notation:  $D^\alpha$
where $\alpha$ is a sequences of $1$s and $2$s, and $\l|\alpha\r|$ denotes
the length of that sequence.  So that, for instance, $D^{\l( 1,2,1,1\r)}=X_1X_2X_1X_1$ and $\l|\l( 1,2,1,1\r)\r|=4$.

	\subsection{The Operator $\boxb$}
		As in \cite{NagelSteinTheBoxbHeatEquationOnPseudoconvexManifoldsOfFiniteType},
we have identified $\dbarb$ with a linear first order partial differential
operator (by identifying functions and $(0,1)$ forms), and $\dbarb^{*}$ with its
 adjoint (also a linear first
order partial differential operator).  Then, as in \cite{NagelSteinTheBoxbHeatEquationOnPseudoconvexManifoldsOfFiniteType}
we may define $\boxb=\dbarb^{*}\dbarb$, and see that with an appropriate definition of domain, $\boxb$ is a self-adjoint
operator (we refer the reader to \cite{NagelSteinTheBoxbHeatEquationOnPseudoconvexManifoldsOfFiniteType} for the details of the Hilbert space theory).

Since $\boxb$ is a self-adjoint operator, it admits a spectral decomposition
$E\l( \lambda\r)$; so that, in particular, $\pi=E\l( 0\r)$.  Hence, for any bounded Borel measurable function $F:\l[0,\infty\r)\rightarrow \C$, we may define:
$$F\l( \boxb\r) = \int_{\l[ 0,\infty\r)} F\l( \lambda\r) dE\l( \lambda\r)$$
and with an abuse of notation, we define:
$$F\l( \boxbt\r) = \int_{\l( 0,\infty\r)} F\l( \lambda\r) dE\l( \lambda\r)$$
So that
$$F\l( \boxbt\r) = \l( 1-\pi\r) F\l( \boxb\r) = F\l( \boxb\r) - F\l( 0\r) \pi$$

	\subsection{Statement of Results}\label{SectionResults}
		\begin{thm}[\cite{MelrosePropagationForTheWaveGroupOfAPositiveSubellipticSecondOrderDifferentialOperator}]\label{ThmFiniteProp}
There exists a constant $\kappa>0$ such that:
$$\supp\l( K_{\cos\l( t\sqrt{\boxb}\r)}\r)\subseteq \l\{\l( x,y\r): \rp{x}{y}\leq \kappa t \r\}$$
See Section \ref{SectionFiniteSpeed} for a discussion of this result and
for another proof.
\end{thm}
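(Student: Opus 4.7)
The plan is to adapt the classical energy (domain of dependence) method for wave equations to the subelliptic setting, using that the principal symbol of $\boxb$ at a covector $\xi$ is $(X_1\xi)^2+(X_2\xi)^2$. Given $f\in C_0^\infty(M)$ with $\supp f\subset\overline{B(x_0,R)}$, set $u(t,x)=(\cos(t\sqrt{\boxb})f)(x)$, so that $u\in C^\infty$ solves $u_{tt}+\boxb u=0$ with $u(0,\cdot)=f$ and $u_t(0,\cdot)=0$. By a density argument, Theorem \ref{ThmFiniteProp} reduces to: $u(T,x_1)=0$ whenever $\rp{x_0}{x_1}>R+\kappa T$, for some $\kappa$ independent of $f$, $R$, $x_0$.

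The key geometric input is the dual characterization of $\rho$: for any $\eta>0$, there exists a smooth $\phi$ on $M$ uniformly approximating $\rp{\cdot}{x_1}$ with $(X_1\phi)^2+(X_2\phi)^2\leq 1+\eta$ pointwise. Fix a buffer $\epsilon>0$ and form the backward cone $\Omega_t:=\{x:\phi(x)<\kappa(T-t)+\epsilon\}$ for $0\leq t\leq T$, together with the energy
\[
E(t):=\int_{\Omega_t}\bigl(|u_t(t,x)|^2+|\dbarb u(t,x)|^2\bigr)\,d\mu(x).
\]
If $\rp{x_0}{x_1}>R+\kappa T+2\epsilon$, then on $\Omega_0$ one has $\rp{\cdot}{x_0}>R$, so both $f$ and $\dbarb f$ vanish there; combined with $u_t(0,\cdot)=0$, this gives $E(0)=0$.

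Differentiating $E$, the bulk contribution $\int_{\Omega_t}\partial_t(|u_t|^2+|\dbarb u|^2)\,d\mu$ rewrites, after substituting $u_{tt}=-\dbarb^*\dbarb u$ and integrating $\dbarb$ once by parts on $\Omega_t$, as a cancellation of the two principal bulk integrals, leaving a boundary integral with integrand $2\Re\bigl(\bar u_t\,\dbarb u\,(X_1\phi-iX_2\phi)/|\nabla\phi|\bigr)$ plus zeroth-order bulk remainders (coming from $X_j^*=-X_j+c_j$) bounded by $C\,E(t)$ via Cauchy--Schwarz. The moving-boundary contribution, computed by the coarea formula, is $-\kappa\int_{\partial\Omega_t}(|u_t|^2+|\dbarb u|^2)/|\nabla\phi|\,d\sigma$. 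The pointwise bound $|X_1\phi-iX_2\phi|\leq\sqrt{1+\eta}$ together with AM--GM yield
\[
E'(t)\leq\bigl(\sqrt{1+\eta}-\kappa\bigr)\int_{\partial\Omega_t}\frac{|u_t|^2+|\dbarb u|^2}{|\nabla\phi|}\,d\sigma+C\,E(t).
\]
Choosing $\kappa\geq\sqrt{1+\eta}$ makes the boundary integral non-positive, so $E'(t)\leq C\,E(t)$; Gronwall with $E(0)=0$ forces $E(t)\equiv 0$ on $[0,T]$. Thus $u_t=\dbarb u=0$ throughout the space-time cone, and since $u(0,\cdot)=f=0$ on the base slice, integrating $u_t=0$ in $t$ along vertical lines (which remain in the cone, as $\Omega_s\supset\Omega_t$ for $s\leq t$) yields $u\equiv 0$ on the cone; in particular $u(T,x_1)=0$. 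Letting $\epsilon\to 0$ and then $\eta\to 0$ shows the theorem holds for any $\kappa>1$.

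\textbf{Main obstacle.} The principal difficulty is constructing the smooth approximation $\phi$ with the sharp horizontal-gradient bound $(X_1\phi)^2+(X_2\phi)^2\leq 1+\eta$: standard Euclidean mollification does not preserve the subunit bound, and one needs a CC-adapted regularization (e.g.\ inf-convolutions in the CC metric). A milder point is justifying the boundary integration by parts on $\partial\Omega_t=\{\phi=\kappa(T-t)+\epsilon\}$, which is handled by Sard's theorem for a.e.~$t$. An alternative which avoids the CC regularization entirely, but uses significantly heavier microlocal machinery, is Melrose's original argument via propagation of wave-front sets for subelliptic wave operators.
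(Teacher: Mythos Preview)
Your strategy is essentially the same as the paper's: an energy (domain of dependence) argument for the $\boxb$-wave equation on backward cones over a smooth approximation to the Carnot--Carath\'eodory distance. The differences are in execution. First, the ``main obstacle'' you identify is exactly what the paper resolves by citing the Nagel--Stein smooth-metric result (Proposition~\ref{PropSmoothMetric}): there is a $C^\infty$ function $d(x,y)\approx\rho(x,y)$ with $\sum_{|\alpha|=1}|D_y^\alpha d(x,y)|\leq 1$; since the theorem only asks for \emph{some} $\kappa$, you do not need the sharp bound $(X_1\phi)^2+(X_2\phi)^2\leq 1+\eta$, and your CC-adapted regularization can be replaced by this citation. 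Second, instead of sharp cones $\{\phi<\kappa(T-t)+\epsilon\}$, coarea, and Sard, the paper multiplies the energy density by a cutoff $\chi_\delta\bigl(d(x_0,y)/(t_0-t)\bigr)$ and lets $\delta\to 0$; this sidesteps all boundary-regularity issues and makes the integration by parts entirely on $M$. Third, because $\boxb=\dbarb^*\dbarb$ exactly, the bulk terms cancel on the nose and one gets $E_\delta'(t)\leq 0$ with no zeroth-order remainder and no Gronwall step; your $C\,E(t)$ term is unnecessary. Finally, you assert $u=\cos(t\sqrt{\boxb})f\in C^\infty$ at the outset, which requires the short argument the paper supplies (split $u=\pi f+\cos(t\sqrt{\boxbt})f$ and use Theorem~\ref{ThmScaledEst}). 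Net effect: same idea, but the paper's packaging is shorter and avoids the regularization problem you flag.
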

We will fix the constant $\kappa$ as in Theorem \ref{ThmFiniteProp},
and all of our other results will be in terms of this $\kappa$.  Our
main results are now as follows:

As in \cite{NagelSteinTheBoxbHeatEquationOnPseudoconvexManifoldsOfFiniteType},
we study the operators $e^{-t\boxb}$ and $e^{-t\boxbt}$ which satisfy:
\begin{equation}\label{EqnRelBetweenHeats}
e^{-t\boxb}=e^{-t\boxbt}+\pi
\end{equation}
We have:
\begin{thm}\label{ThmMainBounds}
$K_{e^{-t\boxbt}}\in C^{\infty}\l( \l(0,\infty\r)\times M\times M \r)$.
Moreover, for every integer $j$ and ordered multi-indices $\alpha$ and
$\beta$, there is a constant $C=C\l(\alpha,\beta,j\r)$ such that:
\begin{equation*}
\l| D_x^\alpha D_y^\beta \partial_t^j K_{e^{-t\boxbt}}\l( x,y\r) \r|\leq
C \frac{\l(\rp{x}{y}\vee \sqrt{t}\r)^{-2j-\l|\alpha\r|-\l|\beta\r|}}{V\l(x,\rp{x}{y}\vee \sqrt{t}\r)}
\end{equation*}
and
\begin{equation*}
\begin{split}
&\bigg| D_x^\alpha D_y^\beta \partial_t^j K_{e^{-t\boxb}}\l( x,y\r) \bigg|\leq\\
&
\begin{cases}
%C V\l( x, \frac{t}{\rp{x}{y}}\r)^{-\frac{1}{2}} V\l( y, \frac{t}{\rp{x}{y}}\r)^{-\frac{1}{2}} 
%\l( \frac{\rp{x}{y}}{t} \r)^{\l|\alpha\r|+\l|\beta\r|+2j} 
%\frac{e^{-\frac{\rp{x}{y}^2}{4\kappa^2 t}}}{\rp{x}{y}t^{-\frac{1}{2}}} & \text{if $t< \frac{\rp{x}{y}^2}{\kappa^2}$,}\\
C V\l( x, \rp{x}{y}\r)^{-1} 
\l( \frac{\rp{x}{y}}{t} \r)^{\l|\alpha\r|+\l|\beta\r|+2j} 
\l(\frac{\rp{x}{y}^2}{t}\r)^{Q-\frac{1}{2}}
e^{-\frac{\rp{x}{y}^2}{4\kappa^2 t}} & \text{if $t< \frac{\rp{x}{y}^2}{\kappa^2}$,}\\
%\frac{e^{-\frac{\rp{x}{y}^2}{4\kappa^2 t}}}{\rp{x}{y}t^{-\frac{1}{2}}} & \text{if $t< \frac{\rp{x}{y}^2}{\kappa^2}$,}\\
C  V\l( x, \rp{x}{y} \r)^{-1}\rp{x}{y}^{-2j-\l|\alpha\r|-\l|\beta\r|} & \text{if $t\geq \frac{\rp{x}{y}^2}{\kappa^2}$}
\end{cases}  
\end{split}
\end{equation*}
\end{thm}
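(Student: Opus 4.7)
The plan is to execute the Sikora-style scheme outlined in the introduction, using only Theorem \ref{ThmFiniteProp} and the NIS regularity of $\boxbt^{-1}$. The starting point is the classical identity
\begin{equation*}
e^{-t\lambda^2}=\frac{1}{\sqrt{4\pi t}}\int_{\R} e^{-s^2/(4t)}\cos(s\lambda)\,ds,
\end{equation*}
which, combined with the spectral theorem applied to $\sqrt{\boxbt}$, gives
\begin{equation*}
e^{-t\boxbt}=\frac{1}{\sqrt{4\pi t}}\int_{\R} e^{-s^2/(4t)}\cos(s\sqrt{\boxbt})\,ds.
\end{equation*}
For cutoffs $\zeta,\zeta'\in C_0^\infty(M)$ whose supports are $\rho$-separated by at least $d$, Theorem \ref{ThmFiniteProp} forces $\zeta\cos(s\sqrt{\boxbt})\zeta'=0$ when $|s|<d/\kappa$. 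Since $\cos(s\sqrt{\boxbt})$ is a contraction on $L^2$, truncating the integral yields
\begin{equation*}
\|\zeta\, e^{-t\boxbt}\,\zeta'\|_{L^2\to L^2}\lesssim e^{-cd^2/t}
\end{equation*}
for a suitable $c>0$ depending on $\kappa$. Inserting factors of $\boxbt^j$ and absorbing them via $\sup_{\lambda>0}\lambda^j e^{-t\lambda/2}\lesssim t^{-j}$ on a residual $e^{-t\boxbt/2}$ produces the analogous off-diagonal bound with prefactor $t^{-j}$, corresponding to $\partial_t^j$ on the kernel side.

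To convert these $L^2$-operator bounds into pointwise kernel bounds I would write
\begin{equation*}
e^{-t\boxbt}=\boxbt^{-N}\bigl(\boxbt^N e^{-t\boxbt}\bigr)
\end{equation*}
for a large integer $N$, and exploit the NIS hypothesis. The inner factor $\boxbt^N e^{-t\boxbt}$ inherits the above off-diagonal $L^2$ bound with an extra $t^{-N}$, while $\boxbt^{-N}$, as an $N$-fold iterate of an NIS operator of order $2$, gains $2N$ derivatives in the scale-invariant Sobolev sense dictated by the $\rho$-metric. Applying $\boxbt^{-N}$ on both sides of the kernel and invoking Sobolev embedding (once $2N>Q$) translates the $L^2$ bound into an $L^\infty$ bound. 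Localizing with cutoffs supported in $B(x,r)$ and $B(y,r)$ for $r\sim\rp{x}{y}\vee\sqrt{t}$ (and $d\sim\rp{x}{y}$) produces the volume factor $V(x,r)^{-1}$. Spatial derivatives $D_x^\alpha D_y^\beta$ are accommodated in the same way, using that each $X_j$ is controlled by $\sqrt{\boxbt}$ in $L^2$-operator norm and that NIS scaling charges extra derivatives only powers of $(\rp{x}{y}\vee\sqrt{t})^{-1}$.

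The estimate for $K_{e^{-t\boxb}}$ then follows from $e^{-t\boxb}=e^{-t\boxbt}+\pi$. The Szeg\"o projection $\pi=1-\boxbt^{-1}\boxb$ is itself an NIS operator of order $0$, which supplies $|D_x^\alpha D_y^\beta K_\pi(x,y)|\lesssim V(x,\rp{x}{y})^{-1}\rp{x}{y}^{-|\alpha|-|\beta|}$; this is precisely the large-$t$ branch ($t\geq \rp{x}{y}^2/\kappa^2$) of the theorem. In the small-$t$ regime, the same cosine-plus-NIS argument applied directly to $\boxb$ retains the Gaussian factor $e^{-\rp{x}{y}^2/(4\kappa^2 t)}$; the powers $(\rp{x}{y}/t)^{|\alpha|+|\beta|+2j}$ and $(\rp{x}{y}^2/t)^{Q-1/2}$ arise from converting $V(x,\sqrt{t})^{-1}$ to $V(x,\rp{x}{y})^{-1}$ through the homogeneous-type exponent $Q$ of Proposition \ref{PropHomogType} together with the scaling of the Sobolev embedding. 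The main obstacle I anticipate is the careful bookkeeping in this last step: maintaining the correct volume factor $V(x,\rp{x}{y}\vee\sqrt{t})^{-1}$ (rather than the weaker $V(x,\sqrt{t})^{-1}$) through all cutoff localizations, while preserving the sharp Gaussian constant $1/(4\kappa^2)$ and the right number of derivatives gained from iterated NIS regularity.
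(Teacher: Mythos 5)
Your proposal contains a genuine error at its foundation: you apply finite propagation speed to $\cos(s\sqrt{\boxbt})$, but this operator does \emph{not} have finite propagation speed. Theorem \ref{ThmFiniteProp} concerns $\cos(t\sqrt{\boxb})$, and since $\cos(s\sqrt{\boxbt}) = \cos(s\sqrt{\boxb}) - \pi$, the kernel of $\cos(s\sqrt{\boxbt})$ picks up the Szeg\"o kernel $K_\pi$, which is supported on all of $M\times M$. The paper flags precisely this point in the remark following the corollary of Proposition \ref{PropFiniteSpeed}: ``Note that $\cos(t\sqrt{\boxbt})$ does not have finite propagation speed. This is essential in understanding why we do not get off-diagonal Gaussian bounds for $e^{-t\boxbt}$.''

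Because of this misapplication, your intermediate claim
\begin{equation*}
\|\zeta\, e^{-t\boxbt}\,\zeta'\|_{L^2\to L^2}\lesssim e^{-cd^2/t}
\end{equation*}
is in fact false. As $t\to 0$ with the supports of $\zeta,\zeta'$ disjoint and held fixed, $\zeta e^{-t\boxbt}\zeta' \to \zeta(1-\pi)\zeta' = -\zeta\pi\zeta'$ strongly, so the left-hand side tends to $\|\zeta\pi\zeta'\|$, which is generically nonzero, whereas the right-hand side tends to $0$. This is consistent with the theorem you are trying to prove: the bound for $K_{e^{-t\boxbt}}$ involves $V(x,\rp{x}{y}\vee\sqrt{t})^{-1}$ and carries no Gaussian decay in the regime $\sqrt{t}\ll\rp{x}{y}$. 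The paper's scheme is organized precisely to avoid invoking any propagation property of $\boxbt$: the on-diagonal bound for $K_{e^{-t\boxbt}}$ (for $\sqrt{t}\gtrsim \rp{x}{y}$) is proved by the $T=T^{1/2}T^{1/2}$ factorization together with the scaled estimate of Theorem \ref{ThmScaledEst}, while the off-diagonal Gaussian bound is proved for $K_{e^{-t\boxb}}$ by truncating the even function $\lambda\mapsto e^{-\lambda^2}$ into a compactly-supported-Fourier-transform piece (discarded by finite propagation of $\cos(s\sqrt{\boxb})$) plus a small tail, with the tail itself then re-split into its $\boxbt$ and $\pi$ components. Your closing sentence suggests applying the cosine argument ``directly to $\boxb$'' in the small-$t$ regime, which is the correct instinct, but that step itself requires exactly the further splitting of the non-propagating piece into $\boxbt$ and $\pi$ contributions (and cannot rely on the false $e^{-t\boxbt}$ estimate above). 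So the first half of the proposal would need to be discarded and replaced by the paper's factorization-and-scaled-estimate argument.
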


\begin{cor}\label{CorCleanerMainBound}
Fix $c<\frac{1}{4\kappa^2}$.  Then, for $0<t<\frac{\rp{x}{y}^2}{\kappa^2}$, we have:
\begin{equation*}
\bigg| D_x^\alpha D_y^\beta \partial_t^j K_{e^{-t\boxb}}\l( x,y\r) \bigg|\lesssim V\l( x, \rp{x}{y}\r)^{-1} \rp{x}{y}^{-\l|\alpha\r|-\l|\beta\r|-2j}e^{-c\frac{\rp{x}{y}^2}{t}}
\end{equation*}
\end{cor}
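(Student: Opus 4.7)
The plan is to derive this corollary directly from the $t < \rp{x}{y}^2/\kappa^2$ case of Theorem \ref{ThmMainBounds} by trading the polynomial prefactor there for a small loss in the Gaussian exponent. Write $r = \rp{x}{y}$ and set $s = r^2/t$; the bound provided by Theorem \ref{ThmMainBounds} has the form
$$V\l(x,r\r)^{-1} \l(\frac{r}{t}\r)^{\l|\alpha\r|+\l|\beta\r|+2j} \l(\frac{r^2}{t}\r)^{Q-\frac{1}{2}} e^{-r^2/\l(4\kappa^2 t\r)}.$$
Multiplying and dividing by $r^{\l|\alpha\r|+\l|\beta\r|+2j}$ converts the first polynomial factor into $r^{-\l|\alpha\r|-\l|\beta\r|-2j}\, s^{\l|\alpha\r|+\l|\beta\r|+2j}$, and the second factor is already $s^{Q-1/2}$, so the claim reduces to proving
$$s^N e^{-s/\l(4\kappa^2\r)} \lesssim e^{-cs},$$
with $N = \l|\alpha\r|+\l|\beta\r|+2j + Q - \tfrac{1}{2}$, uniformly in the range $s > \kappa^2$ that is forced by the hypothesis $t < r^2/\kappa^2$.

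This remaining inequality is equivalent to $s^N \lesssim e^{\l(1/\l(4\kappa^2\r)-c\r)s}$. Since the assumption $c < 1/\l(4\kappa^2\r)$ makes the exponent on the right a fixed positive constant, and since $s$ is bounded away from zero over the admissible range, the polynomial factor is uniformly dominated by the exponential. This is the only substantive step, and it is entirely elementary.

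There is really no obstacle to speak of: the corollary is a cosmetic reformulation of the off-diagonal Gaussian bound of Theorem \ref{ThmMainBounds} into a form more directly comparable to the classical heat kernel estimate \eqref{EqnIntroOffDiagGaus}, and the small margin $1/\l(4\kappa^2\r) - c > 0$ in the Gaussian constant is exactly what is needed to absorb all the polynomial prefactors (which depend on $\alpha,\beta,j$ through $N$, with implicit constants depending on these parameters as in Theorem \ref{ThmMainBounds}).
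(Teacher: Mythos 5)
Your proposal is correct, and it is exactly the intended argument: the paper dismisses this corollary in a single line (``a simple corollary of Theorem~\ref{ThmMainBounds}''), and the computation you supply — rewriting the theorem's bound in terms of $s=\rp{x}{y}^2/t$ and absorbing the polynomial factor $s^{|\alpha|+|\beta|+2j+Q-1/2}$ into the margin $e^{-(1/(4\kappa^2)-c)s}$ — is the only substantive step. Your observation that $s>\kappa^2$ in the admissible range is not even needed, since $s^N e^{-\epsilon s}$ is already bounded on all of $[0,\infty)$ for $N\geq 0$, but it does no harm.
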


\begin{thm}[\cite{NagelSteinTheBoxbHeatEquationOnPseudoconvexManifoldsOfFiniteType}]\label{ThmHeatOpsAreNIS}
$e^{-t\boxb}$ and $e^{-t\boxbt}$ are NIS operators of order $0$ uniformly in $t>0$.  
We offer a new proof of this result.
See
Definition \ref{DefnNisOps} for the definition of NIS operators.
\end{thm}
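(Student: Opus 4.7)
The plan is to deduce the NIS conditions of order zero directly from Theorem \ref{ThmMainBounds}, using the spectral theorem to handle cancellation. An NIS operator of order zero requires, besides $C^\infty$ smoothness off the diagonal, the differential inequality
$$|D_x^\alpha D_y^\beta K(x,y)| \lesssim \frac{\rp{x}{y}^{-|\alpha|-|\beta|}}{V(x,\rp{x}{y})}, \quad x\neq y,$$
together with appropriate cancellation conditions on normalized bumps; the task is to verify both with constants uniform in $t>0$.

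For $K_{e^{-t\boxbt}}$ the differential inequality is immediate from Theorem \ref{ThmMainBounds} with $j=0$: since $\rp{x}{y}\vee \sqrt{t}\geq \rp{x}{y}$, both $(\rp{x}{y}\vee\sqrt{t})^{-|\alpha|-|\beta|}$ and $V(x,\rp{x}{y}\vee\sqrt{t})^{-1}$ are dominated by their $t$-free counterparts, with no $t$ appearing in the constant. For $K_{e^{-t\boxb}}$, the regime $t\geq \rp{x}{y}^2/\kappa^2$ of Theorem \ref{ThmMainBounds} is already in NIS form. In the complementary regime $t<\rp{x}{y}^2/\kappa^2$, setting $s=\rp{x}{y}^2/t\geq \kappa^2$ rewrites the Gaussian estimate as
$$C\, V(x,\rp{x}{y})^{-1}\,\rp{x}{y}^{-|\alpha|-|\beta|}\, s^{|\alpha|+|\beta|+Q-\frac{1}{2}}\, e^{-s/(4\kappa^2)},$$
and the exponential absorbs the polynomial in $s$ uniformly in $s\geq \kappa^2$, yielding the NIS bound.

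What remains, and what I expect to be the main obstacle, is verifying the cancellation part of the NIS definition uniformly in $t$; this is essentially bookkeeping built on two ingredients. First, the spectral theorem supplies $\|e^{-t\boxb}\|_{L^2\to L^2}\leq 1$ and $\|e^{-t\boxbt}\|_{L^2\to L^2}\leq 1$ for all $t\geq 0$. Second, the pointwise bounds already established control the off-diagonal tail. The standard reduction proceeds as follows: given a normalized bump $\varphi$ adapted to $B(x_0,\delta)$, one estimates $e^{-t\boxb}\varphi(x)$ by splitting into $x$ near $x_0$ and $x$ far from $x_0$; near $x_0$ one uses Cauchy--Schwarz together with the $L^2$-contractivity, while far from $x_0$ one integrates against the off-diagonal kernel bound. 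Uniformity in $t$ is handled by separating the cases $t\leq \delta^2$ (in which the $\sqrt{t}$ appearing in the on-diagonal bound of Theorem \ref{ThmMainBounds} is subsumed by $\delta$) and $t>\delta^2$ (in which the $V(x,\sqrt{t})^{-1}$ factor is used directly against the bump's size). The analogous estimates for the adjoint follow by symmetry, and the statement for $e^{-t\boxbt}$ reduces to that for $e^{-t\boxb}$ via \eqref{EqnRelBetweenHeats} and the fact that the Szeg\"o projection $\pi$ is itself an NIS operator of order zero.
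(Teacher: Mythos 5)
Your verification of the size estimate (property~2 of Definition~\ref{DefnNisOps}) from Theorem~\ref{ThmMainBounds} is correct and matches the paper's intent; the observation that the Gaussian absorbs the polynomial is the right (and easy) manipulation. But your treatment of the cancellation condition (property~3) has a real gap, and it is precisely the gap that the paper's proof is designed to fill.

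The cancellation condition demands, for $\phi\in C_0^{\infty}(B(x,\delta))$ and any $l$, a bound
$\sum_{|\alpha|=l}|D^\alpha e^{-t\boxbt}\phi(x)|\lesssim \delta^{-l}\sup_y\sum_{|\beta|\leq N}\delta^{|\beta|}|D^\beta\phi(y)|$
with a constant uniform in $t$. In the regime $t>\delta^2$ your Cauchy--Schwarz argument does work: $|e^{-t\boxbt}\phi(x)|\leq \|K_{e^{-t\boxbt}}(x,\cdot)\|_{L^2}\|\phi\|_{L^2}\lesssim V(x,\sqrt t)^{-1/2}V(x,\delta)^{1/2}\sup|\phi|\lesssim\sup|\phi|$. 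But in the complementary regime $t\leq\delta^2$ the argument fails. If you simply integrate the on-diagonal kernel bound against $\phi$ supported in $B(x,\delta)$ you get (already for $l=0$)
$\int_{\rho(x,y)\leq\delta}\frac{dy}{V(x,\rho(x,y)\vee\sqrt t)}\approx 1+\log(\delta/\sqrt t)$,
which is unbounded as $t\to0^+$; for $l\geq 1$ the corresponding integral is $\approx t^{-l/2}$, far worse than the target $\delta^{-l}$. Cauchy--Schwarz in the inner region fares no better, and ``subsuming $\sqrt t$ by $\delta$'' only makes the volume factor smaller, i.e.\ the bound worse. So the size estimate alone genuinely does not imply the cancellation estimate here; actual cancellation in the kernel (which tends to $\delta_{x=y}-K_\pi$ as $t\to 0^+$) must be used.

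The paper's proof sidesteps all of this with the scaled subelliptic estimate, Theorem~\ref{ThmScaledEst}: applying it to $f=e^{-t\boxbt}\phi$ with $R=\delta$ gives
$|D^\alpha e^{-t\boxbt}\phi(x)|\lesssim V(x,\delta)^{-1/2}\sum_{j\leq L}\delta^{2j-|\alpha|}\|\boxb^j e^{-t\boxbt}\phi\|_{L^2}$,
and then one commutes $\boxb^j$ through the semigroup, uses $\|\boxb^j e^{-t\boxbt}\phi\|_{L^2}\leq\|\boxb^j\phi\|_{L^2}$ by contractivity, and bounds $\|\boxb^j\phi\|_{L^2}\lesssim V(x,\delta)^{1/2}\delta^{-2j}\sup\sum_{|\beta|\leq 2j}\delta^{|\beta|}|D^\beta\phi|$. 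The essential structural inputs are (i) the commutativity of $\boxb$ with $e^{-t\boxbt}$ and (ii) the pointwise subelliptic estimate Theorem~\ref{ThmScaledEst}, neither of which appears in your outline. You should replace your ``Cauchy--Schwarz near $x_0$'' step by exactly this argument; the far-from-$x_0$ discussion is not even needed, since property~3 is an estimate at the center $x$ of the ball supporting $\phi$.
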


%Finally, we have our main result on multipliers:
\begin{thm}\label{ThmMultipliers}
Suppose $m:\l[0,\infty  \r)\rightarrow \C$  and $m\big|_{\l(0,\infty\r)}$ satisfies a Mihlin-H\"ormander
condition of the form:
$$\l| \l( \lambda \partial_\lambda\r)^a m\l( \lambda\r) \r|\leq C_a$$
for every $a>0$, then $m\l( \boxb\r)$ is an NIS operator of order $0$.
\end{thm}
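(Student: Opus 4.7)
Write $m\l(\boxb\r) = m\l(0\r)\pi + m\l(\boxbt\r)$; since the Szeg\"o projection $\pi$ is already known to be NIS of order $0$, it suffices to treat $m\l(\boxbt\r)$, equivalently, to assume $m\l(0\r)=0$. Fix $\phi\in C_c^\infty\l(\l(1/2,2\r)\r)$ with $\sum_{j\in\Z}\phi\l(2^{-j}\lambda\r)=1$ on $\l(0,\infty\r)$, and set $m_j\l(\lambda\r)=m\l(\lambda\r)\phi\l(2^{-j}\lambda\r)$. Passing to the square-root variable, let $n_j\l(\tau\r)=m_j\l(\tau^2\r)$ extended evenly, so that $m_j\l(\boxb\r)=n_j\l(\sqrt{\boxb}\r)$ by the spectral theorem. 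The Mihlin hypothesis gives $\supp n_j\subseteq\{\tau:|\tau|\sim 2^{j/2}\}$ and $\l|\partial_\tau^a n_j\r|\lesssim 2^{-ja/2}$ uniformly in $j$, so $\hat n_j$ is Schwartz at scale $2^{-j/2}$: $\l|\hat n_j\l(t\r)\r|\lesssim 2^{j/2}\l(1+2^{j/2}|t|\r)^{-N}$ for every $N$. Fourier inversion yields
$$K_{m_j\l(\boxb\r)}\l(x,y\r) = \frac{1}{2\pi}\int_\R \hat n_j\l(t\r)\, K_{\cos\l(t\sqrt{\boxb}\r)}\l(x,y\r)\, dt,$$
and Theorem \ref{ThmFiniteProp} forces the integrand to vanish for $|t|<\rp{x}{y}/\kappa$.

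The remainder is structurally identical to the argument for the heat kernel bounds (Theorem \ref{ThmMainBounds}): the heat symbol $e^{-t\lambda}$ is replaced by $m_j\l(\lambda\r)$, and the two standing inputs—finite propagation speed and the NIS order-$2$ property of $\boxbt^{-1}$—do all the work. To convert the wave-equation representation into a pointwise bound on $K_{m_j\l(\boxb\r)}\l(x,y\r)$, the NIS property of $\boxbt^{-1}$ is invoked as a subelliptic Sobolev embedding, giving a pointwise bound of the form $V\l(x,2^{-j/2}\r)^{-1}\l(1+2^{j/2}\rp{x}{y}\r)^{-N}$ for every $N$. Summing over $j\in\Z$, split at $2^{-j_0/2}\sim\rp{x}{y}$, produces the off-diagonal NIS size estimate $\l|K_{m\l(\boxbt\r)}\l(x,y\r)\r|\lesssim V\l(x,\rp{x}{y}\r)^{-1}$. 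The analogous estimate for $D_x^\alpha D_y^\beta$ is obtained by noting that $\lambda^{\l(|\alpha|+|\beta|\r)/2}m\l(\lambda\r)$ again satisfies the Mihlin hypothesis; the resulting extra factors $2^{j\l(|\alpha|+|\beta|\r)/2}$ at scale $j$ sum into $\rp{x}{y}^{-|\alpha|-|\beta|}$.

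The cancellation part of the NIS definition is handled in the same spirit. The $L^2$-boundedness of $m\l(\boxb\r)$ is immediate from $\l\|m\r\|_\infty<\infty$ and the spectral theorem; combined with the subelliptic Sobolev embedding it gives $\l\|D^\alpha m\l(\boxb\r)\eta\r\|_\infty\lesssim r^{-|\alpha|}$ for any normalized bump $\eta$ adapted to a ball of radius $r$, and the adjoint $\overline m\l(\boxb\r)$ satisfies the same estimate. The main obstacle is the bookkeeping needed to commute anisotropic derivatives $X_i$ past $m_j\l(\boxb\r)$: each commutator costs a factor of $\boxb^{1/2}$, equivalently an extra $2^{j/2}$ in the Mihlin symbol at scale $j$, but this bookkeeping is the same one already carried out for the heat operator in Theorem \ref{ThmHeatOpsAreNIS}. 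Once the heat case is in place, the extension to general Mihlin symbols is purely formal spectral calculus combined with the two inputs advertised in the introduction.
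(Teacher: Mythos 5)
Your proposal follows the same overall architecture as the paper's proof: reduce to $m\l(\boxbt\r)$, decompose $m$ dyadically, use finite propagation speed of $\cos\l( t\sqrt{\boxb}\r)$ together with the NIS order-$2$ property of $\boxbt^{-1}$ to show each dyadic piece is a ``pre-$r$-elementary kernel'' at the appropriate scale, and sum. Two points deserve attention.

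First, there is a genuine gap in the summation step. You claim ``summing over $j\in\Z$, split at $2^{-j_0/2}\sim\rp{x}{y}$, produces the off-diagonal NIS size estimate.'' On the compact manifold $M$, $V\l(x,\delta\r)$ saturates at $V\l(M\r)$ for $\delta$ large, so the large-scale part of your sum, with each term of size $\approx V\l(x,2^{-j/2}\r)^{-1}\gtrsim V\l(M\r)^{-1}$ for $j\to -\infty$, diverges. The paper escapes this by noting that $0$ is an isolated point of the spectrum of $\boxb$ (equivalently, that $\boxbt^{-1}$ is bounded, in fact compact, on $L^2$), so that $m_j\l(\boxbt\r)=0$ for all scales beyond a fixed large one, making the effective sum finite at the large-scale end (this is the role of $N_0$ in Lemma \ref{LemmaSumElemOps}). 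Without this observation, the argument does not close. (In Examples \ref{ExamplePolyModel} and \ref{ExampleQuasiHomog}, where the spectrum is not discrete, the paper instead uses that $V\l(x,\gamma\delta\r)\lesssim\gamma^q V\l(x,\delta\r)$ for $\gamma<1$ holds for all $\delta$, i.e.\ $\delta_0=\infty$ in Proposition \ref{PropHomogType}; you have supplied neither ingredient.)

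Second, the step ``the NIS property of $\boxbt^{-1}$ is invoked as a subelliptic Sobolev embedding, giving a pointwise bound of the form $V\l(x,2^{-j/2}\r)^{-1}\l(1+2^{j/2}\rp{x}{y}\r)^{-N}$'' elides the actual mechanism. The wave kernel $K_{\cos\l(t\sqrt{\boxb}\r)}$ is not a function, so one cannot directly estimate the Fourier-inversion integral pointwise after restricting to $\l|t\r|\geq\rp{x}{y}/\kappa$. The paper instead (i) splits the Fourier transform of the symbol as $\widehat F_s+\widehat R_s$ with $R_s$ killed by finite propagation speed, (ii) takes a measurable square root $J$ of $F_s$, and (iii) uses the $L^2$-pairing inequality $\l|K_{J^2}\l(x,y\r)\r|\leq\LpN{2}{K_J\l(x,\cdot\r)}{M}\LpN{2}{K_J\l(\cdot,y\r)}{M}$ from Lemma \ref{LemmaSikIneqs}, with the $L^2$ slice norms controlled via Proposition \ref{PropOnDiagPolyBound}, which is where Theorem \ref{ThmScaledEst} (the subelliptic estimate) actually enters. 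Your outline correctly names the inputs but the ``structurally identical to the heat kernel argument'' claim, while true, should be cashed out in terms of this pairing/square-root structure; similarly, your derivative bookkeeping via $\lambda^{\l(|\alpha|+|\beta|\r)/2}m$ is a scaling heuristic — $D^\alpha$ is not a function of $\boxb$ — and the paper handles derivatives by routing them through the $L^2$ slice norms in Proposition \ref{PropOnDiagPolyBound} and Proposition \ref{PropMultCptSupp}, not by modifying the symbol.
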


In light of Theorem \ref{ThmMultipliers}, we see that $m\l( \boxb\r)$
is bounded on $L^p$ ($1<p<\infty$).  One expects that we do not
need an infinite amount of smoothness for $m$ to achieve this $L^p$
boundedness.  Indeed, fix $\eta\in C_0^{\infty}\l( \l(\frac{1}{2},2\r)\r)$,
with $\eta=1$ on a neighborhood of $1$, and define:
\begin{equation*}
\LtaNloc{a}{m} = \sup_{t>0} \LtaN{a}{\eta\l( \cdot\r) m\l( t\cdot\r)}
\end{equation*}
Where $\LtaN{a}{\cdot}$ denotes the usual $a$ $L^2$ Sobolev space.
(One gets essentially the same norm with any non-trivial choice of
$\eta\in C_0^\infty\l(\l( 0,\infty\r)\r)$.  See \cite{ChristLpBoundsForSpectralMultipliersOnNilpotentGroups}.)

\begin{thm}\label{ThmMultipliersLessSmooth}
Suppose $a>\frac{Q+1}{2}$, and that $\LtaNloc{a}{m}<\infty$.  Then,
$m\l( \boxb\r)$ is bounded on $L^p$, $1<p<\infty$.
\end{thm}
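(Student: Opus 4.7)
The plan is to reduce the $L^p$-boundedness to a Hörmander-type integral condition on $K_{m(\boxb)}$, within the Calderón-Zygmund framework on the space of homogeneous type $(M,\rho,d\mu)$.  The $L^2$-boundedness of $m(\boxb)$ is immediate from the spectral theorem, since the hypothesis $\LtaNloc{a}{m}<\infty$ with $a>1/2$ forces $m\in L^\infty$ via Sobolev embedding on $\R$.  Once the Hörmander integral condition is established, one obtains weak-type $(1,1)$ by a standard Calderón-Zygmund decomposition, and then $L^p$-boundedness for $1<p<\infty$ follows by Marcinkiewicz interpolation and duality (noting that $m(\boxb)^{*}=\bar m(\boxb)$ satisfies the same hypothesis).

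To verify the Hörmander condition, I would dyadically decompose $m=m_0+\sum_{k\geq 1}m_k$ using a fixed $\phi\in C_0^\infty((1/2,2))$ with $\sum_{k\geq 1}\phi(2^{-k}\lambda)=1$ for $\lambda\geq 1$ and $m_k(\lambda)=m(\lambda)\phi(2^{-k}\lambda)$.  The scale-invariant hypothesis bounds $\LtaN{a}{m_k(2^k\cdot)}$ uniformly in $k$.  Setting $\tilde m_k(\lambda)=m_k(\lambda^2)$, an even function supported in $|\lambda|\sim 2^{k/2}$, Fourier inversion together with the spectral theorem for $\sqrt{\boxb}$ yields
\begin{equation*}
m_k(\boxb)=\frac{1}{2\pi}\int_\R \widehat{\tilde m_k}(t)\,\cos(t\sqrt{\boxb})\,dt,
\end{equation*}
so by Theorem~\ref{ThmFiniteProp} the kernel $K_{m_k(\boxb)}(x,y)$ picks up contributions only from $|t|\geq \rp{x}{y}/\kappa$.

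The technical heart is a \emph{Plancherel-type estimate}: for $0\leq s\leq a$, one shows
\begin{equation*}
\int_M\bigl(1+2^{k/2}\rp{x}{y}\bigr)^{2s}\l|K_{m_k(\boxb)}(x,y)\r|^2\,d\mu(y)\lesssim \frac{1}{V(x,2^{-k/2})},
\end{equation*}
with an analogous bound for the derivative kernels $X_{j,y}K_{m_k(\boxb)}$ in which the right-hand side gains a factor of $2^k$ (by replacing $\tilde m_k(\lambda)$ with $\lambda\tilde m_k(\lambda)$ in the symbol, using $\|X_ju\|^2\sim\ip{\boxb u}{u}$).  Two inputs go into proving this estimate: the on-diagonal bound $K_{\chi_{[0,2^k]}(\boxb)}(x,x)\lesssim V(x,2^{-k/2})^{-1}$, obtained by spectral comparison with $e^{-2^{-k}\boxb}$ and the on-diagonal heat-kernel bound from Theorem~\ref{ThmMainBounds}; and finite propagation speed, which lets one trade the spatial weight $(1+2^{k/2}\rp{x}{y})^s$ for $s$ Fourier derivatives of $\widehat{\tilde m_k}$ via a dyadic decomposition on the time side, with fractional $s$ recovered by analytic interpolation between integer endpoints.

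The Hörmander condition is then assembled by weighted Cauchy-Schwarz against dyadic annuli of radius $2^{-k/2}$ (with $V(x,2^j\cdot 2^{-k/2})\lesssim 2^{jQ}V(x,2^{-k/2})$): when $\rp{y}{y'}\geq 2^{-k/2}$ the triangle inequality plus the Plancherel bound gives $\int_{\rp{x}{y}\geq 2\rp{y}{y'}}\!|K_{m_k(\boxb)}(x,y)|\,d\mu(x)\lesssim (2^{k/2}\rp{y}{y'})^{-(a-Q/2)}$, while when $\rp{y}{y'}<2^{-k/2}$ the mean-value theorem along a Carnot-Carathéodory geodesic combined with the derivative Plancherel estimate gives $\int|K_{m_k(\boxb)}(x,y)-K_{m_k(\boxb)}(x,y')|\,d\mu(x)\lesssim 2^{k/2}\rp{y}{y'}$.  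The hypothesis $a>(Q+1)/2$ provides just enough decay margin in both regimes for the geometric series over $k$ to sum to a quantity uniform in $y,y'$.  The main obstacle is the Plancherel-type estimate at fractional $s$: the integer-$s$ cases follow by iterated commutator manipulations between $\cos(t\sqrt{\boxb})$ and multiplication by powers of $\rp{x}{\cdot}$, but obtaining the fractional case requires a careful interpolation argument; everything else is a formal consequence of finite propagation speed, volume doubling, and the on-diagonal heat-kernel bound.
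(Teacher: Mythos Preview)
Your proposal is correct in outline and follows a genuinely different route from the paper.  You take the Calder\'on--Zygmund path: verify the H\"ormander integral condition for $K_{m(\boxb)}$ via a weighted $L^2$ kernel estimate, get weak-type $(1,1)$, and interpolate.  The paper instead goes through Littlewood--Paley theory.  It first proves a \emph{pointwise} kernel bound for compactly supported multipliers (Proposition~\ref{PropMultCptSupp}): if $\supp m\subset[\frac14,4]$ and $\|m\|_{L^2_a}<\infty$, then $|K_{m(r^2\boxb)}(x,y)|\lesssim (1+\rho(x,y)/r)^{-a+\frac12+b}\,V(x,\rho(x,y)+r)^{-1}$.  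From this and Theorem~\ref{ThmMultipliers} it builds a square function $\Lambda f=(\sum_j|\psi_j(\boxb)f|^2)^{1/2}$ with $\|\Lambda f\|_p\approx\|f\|_p$ for $\pi f=0$, bounds each piece $|\psi_j(\boxb)m(\boxbt)\psit_{j+k}(\boxb)g|\lesssim\mathcal{M}g$ pointwise by the maximal function (Lemma~\ref{LemmaBoundElemByMax}), and finishes with the Fefferman--Stein vector-valued maximal inequality.  Your approach is more self-contained for the bare $L^p$ statement and delivers weak-$(1,1)$ as a byproduct; the paper's approach yields the stronger pointwise kernel estimates and the square-function characterization, which feed back into the NIS theory.

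One technical caution on your sketch: the ``commutator manipulations between $\cos(t\sqrt{\boxb})$ and multiplication by powers of $\rho(x,\cdot)$'' are problematic as stated, because the Carnot--Carath\'eodory distance $\rho$ is not smooth.  You would either need to substitute the smooth equivalent metric $d$ of Proposition~\ref{PropSmoothMetric}, or---cleaner---drop commutators entirely and bound $\int_{\rho(x,y)>r}|K_{m_k(\boxb)}(x,y)|^2\,dy$ directly by exploiting Corollary~\ref{CorFourierTransform0}: only the tail $|t|>r/\kappa$ of $\widehat{\tilde m_k}$ contributes on that region, and its $L^2$-mass is controlled by $\|\tilde m_k\|_{L^2_a}$, while the unweighted $L^2$ kernel norm is handled by Proposition~\ref{PropOnDiagPolyBound}.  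This is essentially how the paper derives Proposition~\ref{PropMultCptSupp}, and it sidesteps the interpolation-in-$s$ step you flagged as the main obstacle.
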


\begin{rmk}\label{RmkUseContVersion}
Note that, by the Sobolev embedding theorem, $m|_{\l(0,\infty\r)}$ in the statement
of Theorem \ref{ThmMultipliersLessSmooth} is equal to a continuous function.
$m\l(\boxb\r)$ is defined in terms of this continuous version of $m$.  That is,
we are not allowed to change $m$ on a set of measure $0$.
\end{rmk}

For results similar to Theorem \ref{ThmMultipliersLessSmooth}
see \cite{AlexopoulosSpectralMultipliersOnLigeGroupsOfPolynomialGrowth}
and references therein.  Indeed, the methods in that reference
are related to the methods in this paper.

\begin{rmk}
Due to the results in \cite{ChristLpBoundsForSpectralMultipliersOnNilpotentGroups}, one expects a stronger version of Theorem \ref{ThmMultipliersLessSmooth},
with $a>\frac{Q}{2}$.  This does not seem to follow directly from our
methods.  This is due to the fact that all of the proofs we know of
for multipliers that yield this sharper result take place on groups,
and use a Plancheral type theorem, for which we do not seem
to have a convenient analog.
\end{rmk}

\section{Background}
	In this section, we review the theory of NIS operators.  In addition,
we discuss the main inequality that we will use throughout the paper.

%	\subsection{NIS Operators}\label{SectionNISOps}
		NIS operators were first studied in \cite{NagelRosaySteinWaingerEstimatesForTheBergmanAndSzegoKernels} and the definition
we use is from \cite{KoenigOnMaixmalSobolevAndHolderEstimates}.

\begin{defn}\label{DefnNisOps}
Let $T:C_0^{\infty}\l( M\r) \rightarrow C^{\infty}\l(M\r)$ be a linear
operator, with Schwartz kernel $K_T\l( x,y\r)$.  We say $T$ is an NIS operator,
smoothing of order $r$, if $K_T$ is $C^\infty$ away from the diagonal of
$M\times M$ and the following conditions are satisfied:
\begin{enumerate}
\item For $s\geq 0$, there exist parameters $a(s)<\infty$ and $b<\infty$
such that if $\zeta,\zeta'\in C_0^\infty\l( M\r)$ with $\zeta\prec \zeta'$,
then there exists $C=C\l( s, \zeta, \zeta'\r)$ such that for all $f\in C_0^\infty\l(M\r)$,
$$\LpsN{2}{s}{\zeta T f}{M} \leq C\l( \LpsN{2}{a(s)}{\zeta' f}{M} + \LpsN{2}{b}{f}{M} \r)$$
\item There exist constants $C_{\alpha, \beta}$ such that for $x\ne y$,
$$\l|D_x^{\alpha} D_y^{\beta} K_T\l( x,y\r)\r| \leq C_{\alpha,\beta} \frac{\rp{x}{y}^{r-\l|\alpha\r|-\l|\beta\r|}}{V\l( x, \rp{x}{y}\r)}$$
\item For each integer $l\geq 0$, there is an integer $N=N\l(l\r)\geq 0$
and a constant $C=C\l(l\r)$ such that if $\phi\in C_0^{\infty}\l(B\l(x,\delta\r)\r)$, then,
\begin{equation*}
\sum_{\l|\alpha\r|=l} \l| D^{\alpha} T\l( \phi\r)\l( x\r) \r|\leq C \delta^{r-l}\sup_{y\in M}\sum_{\l| \beta\r| \leq N} \delta^{\l|\beta\r|} \l| D^\beta \phi \l
(y\r) \r|                                                                       \end{equation*}
\item The above conditions also hold for the adjoint operator $T^{*}$.          \end{enumerate}
                                                                                \end{defn}

The following results about NIS operators are well-known (see \cite{KoenigOnMaixmalSobolevAndHolderEstimates,NagelSteinTheBoxbHeatEquationOnPseudoconvexManifoldsOfFiniteType} and references therein):
\begin{itemize}                                                                 \item $\pi$ is an NIS operator of order $0$.
\item There is a self-adjoint NIS operator $\boxbt^{-1}$ of order $2$ such that $\boxb \boxbt^{-1} = 1-\pi = \boxbt^{-1}\boxb$ and $\pi\boxbt^{-1}=0=\boxbt^{-1}\pi$.
\item If $T$ is an NIS operator of order $m$, then $D^\alpha T$ and $T D^\alpha$are NIS operators of order $m-\alpha$.
\item NIS operators of order $\geq 0$ are bounded on $L^p$ ($1<p<\infty$).      For NIS operators of order $>0$ this is related to the fact that $M$ is compact.
\item NIS operators of order $0$ form an algebra.
\item If $T$ is an NIS operator of order $0$, and $\alpha$ is a fixed ordered
multi-index, then
there exist NIS operators $T_\beta$ of order $0$ such that:
$$D^\alpha T = \sum_{\l| \beta\r|\leq \l|\alpha\r|} T_{\beta} D^\beta$$
\item If $S$ is an NIS operator of order $0$ and $\alpha$ is a fixed ordered
multi-index, then there exist NIS operators $S_\beta$ of order $0$ such that:
$$S D^{\alpha} = \sum_{\l|\beta\r|=\l| \alpha\r|} D^{\beta} S_{\beta}$$
\end{itemize}

\begin{rmk}\label{RmkHomogNISCommute}
In Examples \ref{ExamplePolyModel} and \ref{ExampleQuasiHomog} below, we actually have that:
$$D^\alpha T = \sum_{\l| \beta\r|= \l|\alpha\r|} T_{\beta} D^\beta$$
For NIS operators $T$ of order $0$.
\end{rmk}

\begin{rmk}\label{RmkNisDefnsEquiv}
Property 1 of Definition \ref{DefnNisOps} was originally
(in \cite{NagelRosaySteinWaingerEstimatesForTheBergmanAndSzegoKernels})
replaced with that there existed functions $K_T^j\in C^\infty\l(M\times M\r)$
satisfying properties 2-4 uniformly such that $K_T^j\rightarrow K_T$
in $C_0^\infty\l( M\times M\r)'$.  These two definitions (at least when, say, $r=0$) turn out
to be equivalent, as was remarked to us by Ken Koenig.  Indeed, it was shown \cite{NagelSteinTheBoxbHeatEquationOnPseudoconvexManifoldsOfFiniteType}
that the identity satisfied both definitions.  Let $K^j\in C^\infty\l(M\times M\r)$
be an approximation of $\delta_{x=y}$ satisfying properties 2-4 uniformly
in $j$.  Let $T_j$ be the operator with Schwartz kernel $K_j$.  Then,
if $S$ is an operator of order $0$ in the sense of Definition
\ref{DefnNisOps}, $ST_j$ will be an appropriate smooth approximation.
The other direction is easy.
\end{rmk}

Closely related to NIS operators is the main inequality that we shall use
(it is essentially contained in Theorem 3.4.2 of \cite{NagelSteinTheBoxbHeatEquationOnPseudoconvexManifoldsOfFiniteType}):
\begin{thm}\label{ThmScaledEst}
There is a constant $R_0>0$ such that for all $R\leq R_0$ and all $f\in C^\infty
\l( M\r)$ such that $\pi f= 0$, we have that for every $\alpha$,
there exists an $L=L\l( \alpha\r)$ such that:
$$\sup_{B\l( x, R\r)} \l| D^\alpha f \r|\lesssim V\l( x, R\r)^{-\frac{1}{2}} \sum_{j=0}^L R^{2j-\l|\alpha\r|} \LpN{2}{\boxb^j f}{M}$$
\end{thm}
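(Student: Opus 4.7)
The plan is to combine the identity $f = (\boxbt^{-1})^L \boxb^L f$, valid for every $L \geq 0$ because $\pi f = 0$, with a scaled sub-Riemannian Sobolev embedding on the ball $B(x,R)$ and with the NIS kernel bounds for $(\boxbt^{-1})^L$. The mechanism is that $(\boxbt^{-1})^L$ is an NIS operator of order $2L$, so it converts derivatives of $f$ into powers of $\boxb$ applied to $f$ with the correct $R$-scaling.

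First I would record the scaled Sobolev embedding: there is a fixed $N_0 = N_0(Q)$ such that, for all $R \leq R_0$ and all $g \in C^\infty(M)$,
\[
\sup_{B(x,R)} |g| \lesssim V(x,R)^{-1/2} \sum_{|\gamma| \leq N_0} R^{|\gamma|}\, \LpN{2}{D^\gamma g}{B(x,2R)},
\]
obtained by iterating the non-isotropic Sobolev inequality of \cite{NagelSteinWaingerBallsAndMetricsDefinedByVectorFields} and using the lower volume bound from Proposition \ref{PropHomogType} to keep the implicit constants uniform in $x$. Applied with $g = D^\alpha f$, this reduces the theorem to proving, for each $|\gamma| \leq N_0$ and $L$ large enough depending on $|\alpha|$,
\[
\LpN{2}{D^{\alpha+\gamma} f}{B(x,2R)} \lesssim R^{2L - |\alpha| - |\gamma|}\, \LpN{2}{\boxb^L f}{M}.
\]

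For this reduction I would write $D^{\alpha+\gamma} f = D^{\alpha+\gamma}(\boxbt^{-1})^L (\boxb^L f)$ with $L$ chosen so $2L \geq |\alpha| + N_0$, making $D^{\alpha+\gamma}(\boxbt^{-1})^L$ an NIS operator of nonnegative order $2L - |\alpha| - |\gamma|$. Split $\boxb^L f = \chi\, \boxb^L f + (1-\chi)\, \boxb^L f$ with $\chi \in C_0^\infty(B(x,4R))$ equal to $1$ on $B(x,3R)$ and $|D^\beta \chi| \lesssim R^{-|\beta|}$. The near piece is handled by a Schur-test argument using the NIS kernel bound $|K(y,z)| \lesssim \rp{y}{z}^{2L - |\alpha| - |\gamma|}/V(y,\rp{y}{z})$ summed dyadically over $\rp{y}{z} \sim 2^{-k} R$, which produces exactly the factor $R^{2L - |\alpha| - |\gamma|}$ via the doubling property from Proposition \ref{PropHomogType}. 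For the far piece one has $\rp{y}{z} \gtrsim R$, and the same kernel bound, integrated against an $L^2$ input and using the compactness of $M$ to control the tail, yields at least the same $R$-power.

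The main obstacle is the bookkeeping: tracking the $R$-powers and the volume factor $V(x,R)^{-1/2}$ through the dyadic splitting, and confirming that all constants remain uniform in $x$, which is where the restriction $R \leq R_0$ and the $q$-power lower bound in Proposition \ref{PropHomogType} are used. Once the bound is established for a single sufficiently large $L = L(\alpha)$, the form of the conclusion with a sum over $j = 0, \ldots, L$ follows immediately, since the stated right-hand side is only larger than the single-term bound.
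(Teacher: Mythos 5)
Your approach is genuinely different from the paper's, but it has a gap in the ``far piece.'' The paper simply cites the known scaled estimate for the sublaplacian $\sL = X_1^*X_1+X_2^*X_2$ (which produces $V(x,R)^{-\frac12}\sum_j R^{2j-|\alpha|}\LpN{2}{\sL^j f}{M}$ on the right), expands $\LpN{2}{\sL^j f}{M}$ as a combination of $\LpN{2}{D^\beta f}{M}$ with $|\beta|\le 2j$, and then converts $\LpN{2}{D^\beta f}{M}$ to $\sum_{j'\le j}\LpN{2}{\boxb^{j'} f}{M}$ by writing $D^\beta\boxbt^{-1}=\sum A_\gamma D^\gamma$ and using only the global $L^2(M)$-boundedness of non-negative-order NIS operators; the $R$-power matching works out because $R^{2j}\le R^{2j'}$ for $j'\le j$ and $R\le 1$. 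Crucially, no off-diagonal localization of the kernel of $\boxbt^{-1}$ is ever needed.

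Your argument instead tries to localize $D^{\alpha+\gamma}(\boxbt^{-1})^L$ directly, and that is where it breaks down. For the near piece the Schur-test computation does give the stated $R^{m}$ with $m=2L-|\alpha|-|\gamma|$ (one must take $m>0$; at $m=0$ the row/column integrals diverge and you must fall back on $L^2$-boundedness). But for the far piece the claim ``yields at least the same $R$-power'' is false. The NIS kernel bound for a positive-order operator is $|K(y,z)|\lesssim \rp{y}{z}^m/V(y,\rp{y}{z})$, which \emph{grows} with $\rp{y}{z}$; integrating over $\rp{y}{z}\gtrsim R$ on a compact manifold therefore produces a constant of size $\mathrm{diam}(M)^m$, not $R^m$. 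Concretely, the best you get from the far piece is $\LpN{2}{D^{\alpha+\gamma}(\boxbt^{-1})^L(1-\chi)\boxb^L f}{M}\lesssim\LpN{2}{\boxb^L f}{M}$ with no factor of $R$. After the Sobolev step this contributes $V(x,R)^{-\frac12}R^{|\gamma|}\LpN{2}{\boxb^L f}{M}$, and for $|\gamma|=0$ and $2L>|\alpha|$ this is not dominated by $V(x,R)^{-\frac12}\sum_{j\le L}R^{2j-|\alpha|}\LpN{2}{\boxb^j f}{M}$ as $R\to 0$ (the only term with the correct norm is $R^{2L-|\alpha|}\LpN{2}{\boxb^L f}{M}$, which is smaller). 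Even choosing $L$ optimally per $\gamma$ does not fix this, since the Sobolev step pairs $D^{\alpha+\gamma}f$ with only $R^{|\gamma|}$, which is short by a factor of $R$ whenever $|\alpha|+|\gamma|$ is odd. The remedy is the paper's route: use the scaled $\sL$-estimate as the sole localization input, and do the $\sL\to\boxb$ conversion purely at the level of global $L^2(M)$ norms.
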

\begin{proof}
This theorem is closely tied to the fact that $\boxbt^{-1}$ is an NIS operator
of order $2$.  In fact, one way of showing that $\boxbt^{-1}$ is an
NIS operator of order $2$ is
to prove something like the above theorem.  Conversely, assuming $\boxbt^{-1}$
is an NIS operator of order $2$, the above theorem follows.  Indeed,
the theorem is well known for all $f\in C^{\infty}$ if $\boxb$ is replaced
by the sublaplacian $\sL:= X_1^{*}X_1+ X_2^{*}X_2$, and is proven with standard
scaling
arguments.  Now the theorem follows easily:
\begin{equation*}
\begin{split}
\sup_{B\l( x, R\r)} \l| D^\alpha f \r|\lesssim V\l( x, R\r)^{-\frac{1}{2}} \sum_{j=0}^L R^{2j-\l|\alpha\r|} \LpN{2}{\sL^j f}{M}
\end{split}
\end{equation*}
But, $\LpN{2}{\sL^j f}{M}$ is a linear combination of terms of the form
$\LpN{2}{D^\beta f}{M}$ where $\l| \beta\r|\leq 2j$.  And we see, for $f\in C^\infty$ such that $\pi f=0$:
\begin{equation*}
\begin{split}
\LpN{2}{D^{\beta} f}{M} = \LpN{2}{D^{\beta} \boxbt^{-1}\boxb f}{M}
\end{split}
\end{equation*}
But it is easy to see $D^{\beta} \boxbt^{-1} = \sum_{\l| \gamma \r| \leq \l|\beta\r| -2} A_{\gamma} D^\gamma$ where $A_\gamma$ is an NIS operator of order $\geq 0$.  Using
$L^2$ boundedness of NIS operators of order $\geq 0$, we see that:
\begin{equation*}
\LpN{2}{D^{\beta} \boxbt^{-1}\boxb f}{M} \lesssim \sum_{\l| \gamma\r|\leq \l|\beta\r|-2} \LpN{2}{D^{\beta} \boxb f}{M}
 \end{equation*}
The result now follows by induction.
\end{proof}
\begin{rmk}
In Examples \ref{ExamplePolyModel} and \ref{ExampleQuasiHomog} the same proof works, since (due to Remark \ref{RmkHomogNISCommute}) only the $L^2$ boundedness of NIS operators of order $0$ is needed.
Moreover, in these cases we may take $R_0=\infty$.
\end{rmk}

\section{On Diagonal Bounds}\label{SectionOnDiag}
	In this section we present the bounds for $K_{e^{-t\boxbt}}$, which
we call ``on diagonal bounds,'' due to the fact that 
they are analogous to the on diagonal
bounds for the classical heat operator.  This is all essentially
contained in \cite{NagelSteinTheBoxbHeatEquationOnPseudoconvexManifoldsOfFiniteType}, however we include it here as we will need some of the
side results later on, and we wish to emphasize a particular approach,
so as to make it clear how to generalize these results.
We close the section with the main part of the proof
of Theorem \ref{ThmHeatOpsAreNIS}.

First, we note that since $\partial_t e^{-t\boxb} = -\boxb e^{-t\boxb}$
and $\partial_t e^{-t\boxbt} = -\boxb e^{-t\boxbt}$ and since $\boxb$
is a polynomial of degree $2$ in $X_1,X_2$ it follows that the results
of Theorem \ref{ThmMainBounds} when $j\ne 0$ follow from the case when
$j=0$. 
For this reason, we focus only on the case $j=0$.

Second, we note that the bounds in Theorem \ref{ThmMainBounds} for $K_{e^{-t\boxb}}$ when $t\geq\frac{\rp{x}{y}^2}{\kappa^2}$
follow from those for $K_{e^{-t\boxbt}}$, the fact that $\pi$ is an NIS
operator of order $0$, and (\ref{EqnRelBetweenHeats}).  Similarly,
the bounds in Theorem \ref{ThmMainBounds} for $K_{e^{-t\boxbt}}$ when $t<\frac{\rp{x}{y}^2}{\kappa^2}$
follow from those for $K_{e^{-t\boxb}}$, the fact that $\pi$ is an NIS operator
of order $0$, and (\ref{EqnRelBetweenHeats}).
Hence, in this section, we are only concerned with the bounds for
$K_{e^{-t\boxbt}}$ when $t\geq\frac{\rp{x}{y}^2}{\kappa^2}$.

In this section, and in the rest of the paper, we will need some
elementary inequalities that are essentially contained in \cite{SikoraRieszTransformGaussianBoundsAndTheMethodOfWaveEquation} (see Equations (2.7) and
(2.11) of \cite{SikoraRieszTransformGaussianBoundsAndTheMethodOfWaveEquation}
for the below results without any derivatives).
We state these without proof.
%\begin{lemma}\label{LemmaSikIneqs}
%Suppose $S_1,S_2:L^2\l(M\r)\rightarrow L^2\l(M\r)$.  Fix an
%ordered multi-index $\alpha$, and suppose that $K_{S_1}\l( x,y\r)\in C^{\l|\alpha\r|}\l( M; L^2\l(M\r)\r)$ (thought of as a function of $x$; and where $L^2\l(M\r)$
%is given the weak-topology).  Then,
%\begin{equation}\label{EqnPullOutL2Bound}
%\LpN{2}{D_x^{\alpha} K_{S_1 S_2}\l( x, \cdot\r)}{M} \leq \LpOpN{2}{S_2}{M} \LpN{2}{D_x^{\alpha} K_{S_1}\l( x,\cdot\r)}{M}
%\end{equation}
%Furthermore, if $K_{S_2^{*}}\l( x,y\r)\in C^{\l|\beta\r|}\l( M; L^{2}\l(M\r)\r)$ (again thought of as a function of $x$), then,
%\begin{equation}\label{EqnPutIntoTwo}
%\l| D_x^{\alpha} D_y^{\beta} K_{S_1S_2}\l( x,y\r) \r|\leq \LpN{2}{D_x^{\alpha}K_{S_1}\l( x,\cdot\r)}{M} \LpN{2}{D_y^{\beta} K_{S_2}\l(\cdot,y\r)}{M}
%\end{equation}
%\end{lemma}
\begin{lemma}\label{LemmaSikIneqs}
Suppose $S_1,S_2:L^2\l(M\r)\rightarrow L^2\l(M\r)$.  Fix an ordered multi-index
$\alpha$.  Suppose that for some open set $U$, $D_x^\alpha K_{S_1S_2}\l(x,y\r)\in L^1_{\loc}\l(U\times M\r)$, and that $\sup_{x\in U} \LpN{2}{D_x^{\alpha} K_{S_1}\l( x,\cdot\r)}{M}<\infty$.  Then, for $x\in U$,
\begin{equation}\label{EqnPullOutL2Bound}
\LpN{2}{D_x^{\alpha} K_{S_1 S_2}\l( x, \cdot\r)}{M} \leq \LpOpN{2}{S_2}{M} \LpN{2}{D_x^{\alpha} K_{S_1}\l( x,\cdot\r)}{M}
\end{equation}
%where the LHS is finite if the RHS is finite.
Furthermore, if instead we have two neighborhoods, $U,V\subseteq M$, and if
$$D_x^\alpha K_{S_1}\l( x,y\r), D_y^{\beta}K_{S_2}\l( x,y\r)\in L^1_{\loc}\l(M\times M\r)$$
 with $$\sup_{x\in U}\LpN{2}{D_x^{\alpha}K_{S_1}\l( x,\cdot\r)}{M}+ \sup_{y\in V}\LpN{2}{D_y^{\beta} K_{S_2}\l(\cdot,y\r)}{M} <\infty$$
 then for $x\in U, y\in V$,
\begin{equation}\label{EqnPutIntoTwo}
\l| D_x^{\alpha} D_y^{\beta} K_{S_1S_2}\l( x,y\r) \r|\leq \LpN{2}{D_x^{\alpha}K_{S_1}\l( x,\cdot\r)}{M} \LpN{2}{D_y^{\beta} K_{S_2}\l(\cdot,y\r)}{M}
\end{equation}
%where again, if the RHS is finite, then the inequality holds.
\end{lemma}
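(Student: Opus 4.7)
The plan is to interpret $K_{S_1 S_2}$ via the formal composition identity
\begin{equation*}
K_{S_1 S_2}\l( x,y\r) = \int_M K_{S_1}\l( x,z\r)\, K_{S_2}\l( z,y \r)\, dz
\end{equation*}
and pull the derivatives through it. For (\ref{EqnPullOutL2Bound}), fix $x\in U$ and set $f_x(z) = D_x^\alpha K_{S_1}(x,z)$; by hypothesis $f_x\in L^2(M)$ with norm bounded uniformly in $x\in U$. Applying $D_x^\alpha$ to the composition formula yields, distributionally,
\begin{equation*}
D_x^\alpha K_{S_1 S_2}\l( x,\cdot\r) = S_2^T f_x,
\end{equation*}
where $S_2^T$ denotes the transpose of $S_2$, i.e.\ $(S_2^T g)(y) = \int K_{S_2}(z,y)\, g(z)\, dz$. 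Since $S_2^T$ has the same $L^2(M)\to L^2(M)$ operator norm as $S_2$, applying the $L^2$-boundedness of $S_2^T$ to $f_x$ gives (\ref{EqnPullOutL2Bound}) at once.

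For (\ref{EqnPutIntoTwo}), I would apply $D_x^\alpha$ in the first variable and $D_y^\beta$ in the second to the composition formula to obtain the pointwise representation
\begin{equation*}
D_x^\alpha D_y^\beta K_{S_1 S_2}\l( x,y\r) = \int_M D_x^\alpha K_{S_1}\l( x,z\r)\cdot D_y^\beta K_{S_2}\l( z,y\r)\, dz,
\end{equation*}
and then apply the Cauchy--Schwarz inequality in $z$. One could alternatively derive (\ref{EqnPutIntoTwo}) by iterating (\ref{EqnPullOutL2Bound}), but the direct route is cleaner once the pointwise identity is in hand.

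The main obstacle is rigorously justifying these distributional identities under the weak $L^1_{\loc}$ hypotheses of the lemma. I would proceed by pairing both sides against a test function $\phi\in C_0^\infty(U)$ (respectively $\phi(x)\psi(y)\in C_0^\infty(U\times V)$), integrating by parts to move the vector-field derivatives off the kernels and onto the test functions, using Fubini (justified by the local integrability assumptions together with the uniform $L^2$ bound on $f_x$) to interchange the $z$-integration with the integrations in $x$ and $y$, and finally integrating by parts back to recover the derivatives on the relevant kernels. The $L^1_{\loc}$ hypothesis on $D_x^\alpha K_{S_1 S_2}$ (respectively on $D_x^\alpha K_{S_1}$ and $D_y^\beta K_{S_2}$) exists precisely to make this manipulation valid and to yield an a.e.\ identity on $U$ (resp.\ $U\times V$). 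Once the identities are established, the remaining analytic content is the standard Hilbert-space fact $\|S_2^T\|_{L^2\to L^2}=\|S_2\|_{L^2\to L^2}$ together with Cauchy--Schwarz, and no additional estimates specific to $\boxb$ are needed.
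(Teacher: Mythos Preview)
Your proposal is correct and matches the paper's approach: the paper states this lemma without proof (citing Sikora), and the accompanying remark singles out exactly the composition identity $K_{S_1S_2}(x,y)=\int_M K_{S_1}(x,z)K_{S_2}(z,y)\,dz$ that you use, followed by the operator bound for (\ref{EqnPullOutL2Bound}) and Cauchy--Schwarz for (\ref{EqnPutIntoTwo}). Your discussion of the distributional justification under the $L^1_{\loc}$ hypotheses is more than the paper provides.
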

\begin{rmk}
The main point of (\ref{EqnPutIntoTwo}) is that:
$$K_{S_1S_2}\l( x,y\r) = \int_{M} K_{S_1}\l( x,z\r) K_{S_2} \l( z,y\r) dz$$
\end{rmk}

Note that, by (\ref{EqnPutIntoTwo}), and using the fact that $e^{-t\boxbt}$
is self-adjoint:
\begin{equation}\label{EqnFactorBoxbt}
\begin{split}
\l| D_x^{\alpha} D_y^{\beta}K_{e^{-t\boxbt}}\l( x,y\r) \r| &\leq \LpN{2}{D_x^{\alpha} K_{e^{-\frac{t}{2}\boxbt}}\l( x,\cdot\r)}{M} \LpN{2}{D_y^{\beta} K_{e^{-\frac{t}{2}\boxbt}}\l( \cdot, y\r)}{M}\\
&= \LpN{2}{D_x^{\alpha} K_{e^{-\frac{t}{2}\boxbt}}\l( x,\cdot\r)}{M} \LpN{2}{D_y^{\beta} K_{e^{-\frac{t}{2}\boxbt}}\l( y,\cdot\r)}{M}
\end{split}
\end{equation}
Here, we have implicitly used that $K_{e^{-t\boxbt}}\in C^{\infty}\l(M\times M\r)$, which follows easily from Theorem \ref{ThmScaledEst}, since $\boxb^j e^{-t\boxbt} \boxb^k$ is bounded on $L^2\l( M\r)$ for every $j,k$, and $\pi e^{-t\boxbt} = 0 = e^{-t\boxbt}\pi$.  (\ref{EqnFactorBoxbt}) shows that to
prove the on diagonal estimates for $e^{-t\boxbt}$, the following
proposition will be sufficient:
\begin{prop}\label{PropOnDiagL2Bound}
For $t>0$, 
$$\LpN{2}{D_x^{\alpha}K_{e^{-t\boxbt}}\l( x, \cdot\r)}{M}^2\lesssim \frac{\sqrt{t}^{-2\l|\alpha\r|}}{V\l( x, \sqrt{t}\r)}$$
\end{prop}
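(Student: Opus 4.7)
The strategy is to convert the $L^2$ bound on the kernel into a pointwise estimate via duality, and then apply Theorem \ref{ThmScaledEst} together with the trivial functional-calculus bound on $\boxb^j e^{-t\boxbt}$. Fix $x$ and set $h(y) := D_x^\alpha K_{e^{-t\boxbt}}(x, y)$; by smoothness of $K_{e^{-t\boxbt}}$, the function $h$ represents in $L^2$ the linear functional $f \mapsto D^\alpha\l( e^{-t\boxbt} f\r)\l(x\r)$, so by Cauchy--Schwarz
$$\LpN{2}{h}{M} \;=\; \sup\l\{ \l| D^\alpha\l( e^{-t\boxbt} f\r)\l(x\r) \r| : \LpN{2}{f}{M} = 1 \r\}.$$
It therefore suffices to estimate the right-hand side uniformly over unit-norm $f$.

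Given such an $f$, set $g := e^{-t\boxbt} f$; since $\pi e^{-t\boxbt} = 0$, we have $\pi g = 0$, so Theorem \ref{ThmScaledEst} applies. In the regime $\sqrt{t} \leq R_0$, take $R := \sqrt{t}$ to obtain
$$\l| D^\alpha g\l(x\r) \r| \;\leq\; \sup_{B\l(x, \sqrt{t}\r)} \l| D^\alpha g\r| \;\lesssim\; V\l(x, \sqrt{t}\r)^{-\frac{1}{2}} \sum_{j=0}^{L} t^{j - \frac{\l|\alpha\r|}{2}} \LpN{2}{\boxb^j g}{M}.$$
The spectral theorem yields $\LpN{2}{\boxb^j g}{M} \leq \l( \sup_{\lambda \geq 0} \lambda^j e^{-t\lambda} \r) \LpN{2}{f}{M} \lesssim t^{-j}$, so each summand contributes exactly $t^{j - \l|\alpha\r|/2} \cdot t^{-j} = t^{-\l|\alpha\r|/2}$. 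Hence $\l| D^\alpha g\l(x\r) \r| \lesssim V\l(x, \sqrt{t}\r)^{-1/2} t^{-\l|\alpha\r|/2}$; taking the supremum over $f$ and squaring gives the stated estimate. The complementary regime $\sqrt{t} > R_0$ is easy: $M$ compact forces $V(x, \sqrt{t})^{-1}$ to be uniformly bounded, while the spectral gap of $\boxb$ on $(1-\pi)L^2\l( M\r)$ produces exponential decay of $\LpN{2}{\boxb^j e^{-t\boxbt} f}{M}$ that dominates any polynomial factor in $t$.

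\textbf{Main obstacle.} The critical conceptual step is the duality reduction, which reframes an $L^2$ kernel bound as a pointwise estimate to which Theorem \ref{ThmScaledEst} can be applied. Once that is in place, the scale $R = \sqrt{t}$ is forced by parabolic scaling and is precisely what makes every term in the sum over $j$ contribute equally after cancellation with the $t^{-j}$ operator-norm bound. No estimate specific to $\boxb$ is needed beyond Theorem \ref{ThmScaledEst} itself.
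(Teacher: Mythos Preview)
Your proof is correct and follows essentially the same route as the paper: duality to reduce to a pointwise bound on $D^\alpha e^{-t\boxbt}f$ at $x$, then Theorem \ref{ThmScaledEst} with $R=\sqrt{t}$ together with the spectral bound $\l\|\l(t\boxb\r)^j e^{-t\boxbt}\r\|_{L^2\to L^2}\lesssim 1$. The only cosmetic difference is in the large-$t$ regime: the paper uses the $L^2$ boundedness of $\boxbt^{-1}$ to iterate $\l\|\boxb^j e^{-t\boxbt}f\r\|\lesssim\l\|\boxb^{j+N}e^{-t\boxbt}f\r\|\lesssim t^{-N}$, whereas you invoke the spectral gap directly; these are equivalent formulations of the same fact (and the paper's subsequent remark even notes the resulting extra decay you allude to).
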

\begin{rmk}
To see that Proposition \ref{PropOnDiagL2Bound} is sufficient,
we must use that, in this case we are concerned with, $V\l( x, \sqrt{t}\r)\approx V\l( y, \sqrt{t}\r)$,
which follows from Proposition \ref{PropHomogType} and the fact that
$\sqrt{t}\gtrsim \rp{x}{y}$.
\end{rmk}
\begin{proof}[Proof of Proposition \ref{PropOnDiagL2Bound}]
Fix $x$ and $\alpha$, and recall the number $R_0$ from Theorem \ref{ThmScaledEst}.  There are two cases:
$\sqrt{t}\leq R_0$ and $\sqrt{t}>R_0$.  We first investigate the
case $\sqrt{t}\leq R_0$.  Let $\phi\in C^\infty\l(M\r)$.  We apply
Theorem \ref{ThmScaledEst} to see that there exists an $L$ such that:
\begin{equation*}
\begin{split}
\l| \l(D_x^{\alpha} e^{-t\boxbt} \phi\r)\l( x\r)\r| & \lesssim V\l( x, \sqrt{t}\r)^{-\frac{1}{2}} \sum_{j=0}^L \sqrt{t}^{2j-\l| \alpha\r|} \LpN{2}{\boxb^j e^{-t\boxbt}\phi}{M}\\
&= V\l( x, \sqrt{t}\r)^{-\frac{1}{2}} \sum_{j=0}^L \sqrt{t}^{-\l| \alpha\r|} \LpN{2}{\l(t\boxb\r)^j e^{-t\boxbt}\phi}{M}\\
&\lesssim V\l( x, \sqrt{t}\r)^{-\frac{1}{2}} \sqrt{t}^{-\l| \alpha\r|} \LpN{2}{\phi}{M}
\end{split}
\end{equation*}
where in the last line, we have used that $\l( t\boxb\r) e^{-t\boxbt}$ is bounded on $L^2$ uniformly in $t$.  Taking the supremum over all $\LpN{2}{\phi}{M}=1$,
we see that the statement of the proposition follows in this case.

If $R_0=\infty$, we would be done.  Since in this case $R_0$ may not
equal $\infty$, we use the fact the $\boxbt^{-1}:L^2\l(M\r)\rightarrow L^2\l(M\r)$ (a fact we do not have in all of the examples we consider in Section \ref{SectionOtherExamples},
however, in those examples where we do not have it, we instead have $R_0=\infty$).

We now assume $\sqrt{t}>R_0$.  We use that $V\l(x,R_0\r)\approx 1\approx V\l( x, \sqrt{t}\r)$ for all $x$.  We apply the above proof with $R_0$
in place of $\sqrt{t}$ to see:
\begin{equation*}
\begin{split}
\l| \l(D_x^{\alpha} e^{-t\boxbt} \phi\r)\l( x\r)\r| & \lesssim V\l( x, R_0\r)^{-\frac{1}{2}} \sum_{j=0}^L R_0^{2j-\l| \alpha\r|} \LpN{2}{\boxb^j e^{-t\boxbt}\phi}{M}\\
& \approx \sum_{j=0}^L \LpN{2}{\boxb^j e^{-t\boxbt}\phi}{M}\\
&=\sum_{j=0}^L \LpN{2}{\boxbt^{-1} \boxb^{j+1} e^{-t\boxbt}\phi}{M}\\
&\lesssim \sum_{j=0}^L \LpN{2}{\boxb^{j+1} e^{-t\boxbt}\phi}{M}\\
&\lesssim \cdots \lesssim \sum_{j=0}^L \LpN{2}{\boxb^{j+N} e^{-t\boxbt}\phi}{M}\\
&\lesssim t^{-N} \LpN{2}{\phi}{M}\\
&\lesssim V\l( x, \sqrt{t}\r)^{-\frac{1}{2}} \sqrt{t}^{-\l|\alpha\r|} \LpN{2}{\phi}{M}
\end{split}
\end{equation*}
provided $2N\geq \l|\alpha\r|$, which completes the proof.
\end{proof}
\begin{rmk}
One might note that following the same method we used when $\sqrt{t}>R_0$,
one could use the $L^2\l( M\r)$ boundedness of $\boxbt^{-1}$ to show that:
$$\LpN{2}{D_x^{\alpha}K_{e^{-t\boxbt}}\l( x, \cdot\r)}{M}^2\lesssim \frac{\sqrt{
t}^{-2N}}{V\l( x, \sqrt{t}\r)}$$
for all $N\geq \l|\alpha\r|$.  Indeed, this is not surprising given
that the spectrum of $\boxb$ is discrete, and we actually expect
the above bound to fall off exponentially in $t$.  This will follow
from our study of multipliers in Section \ref{SectionMultipliers}.  However, we do not
expect this sort of decay in (say) Examples \ref{ExamplePolyModel} and \ref{ExampleQuasiHomog} below, where
$0$ is not an isolated point of the spectrum of generator of the
heat semigroup.
\end{rmk}

From Proposition \ref{PropOnDiagL2Bound} we obtain
the following:
\begin{prop}\label{PropOnDiagPolyBound}
For every $\alpha$ an ordered multi-index, and every $m>Q+2\l|\alpha\r|$
we have:
$$\LpN{2}{D_x^\alpha K_{\l(1+t\boxbt\r)^{-\frac{m}{4}}}\l( x,\cdot\r)}{M}\lesssim \frac{\sqrt{t}^{-\l|\alpha\r|}}{V\l( x, \sqrt{t}\r)^{\frac{1}{2}}}$$
\end{prop}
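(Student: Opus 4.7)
My plan is to reduce this bound on $(1+t\boxbt)^{-m/4}$ to the heat-kernel bound of Proposition \ref{PropOnDiagL2Bound} via the Gamma-function subordination formula
\begin{equation*}
(1+\lambda)^{-m/4} = \frac{1}{\Gamma(m/4)} \int_0^\infty u^{m/4-1} e^{-u} e^{-u\lambda}\,du, \qquad \lambda\geq 0.
\end{equation*}
Applying this in the spectral calculus with $\lambda=t\boxbt$ gives, at the level of Schwartz kernels,
\begin{equation*}
D_x^\alpha K_{(1+t\boxbt)^{-m/4}}(x,y) = \frac{1}{\Gamma(m/4)} \int_0^\infty u^{m/4-1} e^{-u}\, D_x^\alpha K_{e^{-ut\boxbt}}(x,y)\,du,
\end{equation*}
where differentiation and integration may be exchanged because the kernel of $e^{-ut\boxbt}$ is smooth in $y$ (by the remark right before Proposition \ref{PropOnDiagL2Bound}) with uniform-in-$u$ estimates from Proposition \ref{PropOnDiagL2Bound}. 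Taking $L^2(M,dy)$-norms and applying Minkowski's integral inequality, then using Proposition \ref{PropOnDiagL2Bound} with $ut$ in place of $t$, I get
\begin{equation*}
\LpN{2}{D_x^\alpha K_{(1+t\boxbt)^{-m/4}}(x,\cdot)}{M} \lesssim t^{-|\alpha|/2} \int_0^\infty u^{m/4-1-|\alpha|/2} e^{-u}\, V(x,\sqrt{ut})^{-1/2}\,du.
\end{equation*}

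The remaining task is to bound the $u$-integral by $V(x,\sqrt{t})^{-1/2}$, and this is where I need to use the volume-growth from Proposition \ref{PropHomogType}. I split at $u=1$. For $u\geq 1$ the ball $B(x,\sqrt{t})$ is contained in $B(x,\sqrt{ut})$, so $V(x,\sqrt{ut})^{-1/2}\leq V(x,\sqrt{t})^{-1/2}$, and the remaining factor $u^{m/4-1-|\alpha|/2}e^{-u}$ is integrable on $[1,\infty)$ regardless of $m$. For $u<1$ I apply Proposition \ref{PropHomogType} with $\gamma = u^{-1/2}\geq 1$ and $\delta=\sqrt{ut}$ to get
\begin{equation*}
V(x,\sqrt{t}) = V(x, u^{-1/2}\sqrt{ut}) \lesssim u^{-Q/2} V(x,\sqrt{ut}),
\end{equation*}
so that $V(x,\sqrt{ut})^{-1/2}\lesssim u^{-Q/4} V(x,\sqrt{t})^{-1/2}$. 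The small-$u$ contribution is then controlled by $V(x,\sqrt{t})^{-1/2}\int_0^1 u^{m/4-1-|\alpha|/2-Q/4}\,du$, which converges precisely when $m/4 - |\alpha|/2 - Q/4 > 0$, i.e.\ $m > Q + 2|\alpha|$ — exactly the hypothesis.

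The step I expect to be the main nuisance — though not a serious obstacle — is the small-$u$ regime. The exponential $e^{-u}$ is useless there, so integrability hinges entirely on the power $u^{m/4-1-|\alpha|/2}$ from the subordination formula beating the volume blow-up $u^{-Q/4}$, which is what fixes the sharp threshold $m>Q+2|\alpha|$. The large-$u$ regime is trivial by the exponential decay, and the exchange of $D_x^\alpha$, Minkowski, and $\int du$ is routine once one notes that Proposition \ref{PropOnDiagL2Bound} already provides uniform-in-$u$ (locally in $x$) integrable majorants.
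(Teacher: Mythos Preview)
Your proof is correct and is essentially the argument the paper has in mind: the paper's proof simply cites Theorem~1 of \cite{SikoraRieszTransformGaussianBoundsAndTheMethodOfWaveEquation}, whose proof is precisely this Gamma-subordination of $(1+t\boxbt)^{-m/4}$ to $e^{-ut\boxbt}$ followed by the split at $u=1$ and the doubling estimate of Proposition~\ref{PropHomogType}. You have written out exactly that argument with the derivatives $D_x^\alpha$ carried along, and the threshold $m>Q+2|\alpha|$ emerges for the right reason.
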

\begin{proof}
This follows just as in Theorem 1 of \cite{SikoraRieszTransformGaussianBoundsAndTheMethodOfWaveEquation}, by using Proposition \ref{PropOnDiagL2Bound}.
\end{proof}

\if 0
\begin{rmk}
Actually, Propositions \ref{PropOnDiagL2Bound} and \ref{PropOnDiagPolyBound}
formally imply one another.  This follows as in Theorem 1 of \cite{SikoraRieszTransformGaussianBoundsAndTheMethodOfWaveEquation}.
\end{rmk}
\fi

We now close this section with the main part of the proof of Theorem \ref{ThmHeatOpsAreNIS}.
Indeed we will show that property 3 of Definition \ref{DefnNisOps} holds
uniformly in $t> 0$ for $e^{-t\boxbt}$.  (\ref{EqnRelBetweenHeats}),
along with the fact that $\pi$ is an NIS operator of order $0$,
then shows that the same is true for $e^{-t\boxb}$.
Fix an ordered multi-index $\alpha$, and apply Theorem \ref{ThmScaledEst}
to see, for $\phi\in C_0^{\infty}\l( B\l( x, \delta\r)\r)$ (with $\delta\leq R_0$):
\begin{equation*}
\begin{split}
\l| D^{\alpha} e^{-t\boxbt} \phi \l( x\r)\r| &\lesssim V\l(x, \delta\r)^{-\frac{1}{2}} \sum_{j=0}^L \delta^{2j-\l|\alpha\r|} \LpN{2}{\boxb^j e^{-t\boxbt} \phi}{M}\\
&\lesssim V\l(x, \delta\r)^{-\frac{1}{2}} \sum_{j=0}^L \delta^{2j-\l|\alpha\r|}\LpN{2}{\boxb^j \phi}{M}\\
&\lesssim \sum_{\l|\beta\r|\leq 2L} \delta^{\l|\beta\r|-\l|\alpha\r|} \sup_{B\l(x,\delta\r)} \l| D^\beta \phi \r|
\end{split}
\end{equation*}
The case when $\delta>R_0$ follows just as above, but by using $R_0$ in place
of $\delta$.

Once we have established the off diagonal bounds for $e^{-t\boxb}$,
the remainder of Theorem \ref{ThmHeatOpsAreNIS} will follow immediately.  We
leave the details to the reader.

\section{Finite Speed of Propagation}\label{SectionFiniteSpeed}
	In this section, we discuss Theorem \ref{ThmFiniteProp}, which will
be one of our main tools in proving the off diagonal bounds for
$e^{-t\boxb}$.  The results in \cite{MelrosePropagationForTheWaveGroupOfAPositiveSubellipticSecondOrderDifferentialOperator} are stated
in terms of a metric which (in this case) is defined in terms of
$\boxb$.  It is easy to see that $\boxb$ and
the sublaplacian $X_1^{*}X_1+X_2^{*}X_2$ give rise to the
same metric.  It is well known that the metric in \cite{MelrosePropagationForTheWaveGroupOfAPositiveSubellipticSecondOrderDifferentialOperator}
induced by the sublaplacian is equivalent to the metric $\rho$.
Then Theorem \ref{ThmFiniteProp} follows from \cite{MelrosePropagationForTheWaveGroupOfAPositiveSubellipticSecondOrderDifferentialOperator}.

The above outline gives rise to a somewhat round-about proof.
Indeed, the result in \cite{MelrosePropagationForTheWaveGroupOfAPositiveSubellipticSecondOrderDifferentialOperator}
follows by approximating $\boxb$ by elliptic operators, and proving
a corresponding result for the approximating elliptic operators.
Then one must note the above equivalence of metrics.  In
the special case of $\boxb$, however, a more direct proof
will suffice.  Indeed, we need only adapt the 
proof on page 162 of \cite{FollandIntroductionToPartialDifferentialEquations} and the one in the appendix of \cite{MullerMarcinkiewiczMultipliersAndMultiParameterStructureOnHeisenbergGroupsLectureNotes} to this situation, and we present this argument below.  
One benefit of this argument is that it requires essentially no
work to adapt it to Examples \ref{ExamplePseudoConvex} and \ref{ExamplePolyModel}, below;
though these cases can also be covered by the methods of
\cite{MelrosePropagationForTheWaveGroupOfAPositiveSubellipticSecondOrderDifferentialOperator}, with a little more work.
%We justify the inclusion of this proof, as it generalizes
%with no essential change to cover Example \ref{ExamplePseudoConvex}, below.
To proceed, we need a result from \cite{NagelSteinDifferentiableControlMetricsAndScaledBumpFunctions}:
\begin{prop}[\cite{NagelSteinDifferentiableControlMetricsAndScaledBumpFunctions}]\label{PropSmoothMetric}
There exists a function $d:M\times M\rightarrow \R^{+}$ such that:
$$d\l(x,y\r) \approx \rp{x}{y}$$
and for $x\ne y$,
$$\l| D_x^{\alpha}D_y^{\beta} d\l( x,y\r) \r|\lesssim d\l( x, y\r)^{1-\l|\alpha\r|-\l|\beta\r|}$$
By replacing $d\l( x,y\r)$ with $d\l( x,y\r)+d\l(y,x\r)$ we 
may assume that $d\l(x,y\r)=d\l(y,x\r)$.
By multiplying $d$ by a fixed constant, we may also assume:
$$\sup_{x\ne y}\sum_{\l|\alpha\r|=1} \l| D_y^{\alpha} d\l( x,y\r)\r| \leq 1 $$
\end{prop}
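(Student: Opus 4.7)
The plan has three stages: construct a smooth companion to $\rho$ using the adapted coordinate geometry, then symmetrize, then rescale.

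For the construction, I would exploit the exponential-type coordinate maps $\Phi_x:U\subset\R^2 \to M$ from the Nagel-Stein-Wainger ball theory: these depend jointly smoothly on $(x,u)$ and pull back each CC-ball $B\l(x,\delta\r)$ to a set comparable to an anisotropic Euclidean box whose weights reflect the commutator degrees of $X_1,X_2$. A smooth companion $\dip{x}{y}$ can then be defined as a $C^\infty$ regularization of the implicit scale $\inf\l\{\delta : y\in \Phi_x\l(\text{box of scale }\delta\r)\r\}$, most cleanly by averaging a fixed bump $\chi$ of the normalized argument at scale $\delta$ against $\delta^{-N-1}\,d\delta$ for a suitably large $N$. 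The resulting $d$ is smooth off the diagonal and comparable to $\rho$ by the containment properties of the boxes. The derivative estimate $|D_x^\alpha D_y^\beta \dip{x}{y}|\lesssim \dip{x}{y}^{1-|\alpha|-|\beta|}$ then follows from the scaling covariance: each $X_i$ is homogeneous of degree $-1$ under the anisotropic dilations of the adapted box, so every $D^\alpha$ or $D^\beta$ hitting the regularized integrand contributes one factor of the relevant scale, and the $\delta$-integral concentrates at $\delta\approx \dip{x}{y}$.

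For the symmetrization, replacing $d$ by $\l(\dip{x}{y}+\dip{y}{x}\r)/2$ immediately gives a symmetric function; comparability $d\approx\rho$ survives because $\rho$ is itself symmetric, and the derivative bound is preserved summand by summand (the argument for $\dip{y}{x}$ is identical after swapping the roles of the two arguments). For the normalization, the derivative estimate with $|\alpha|=0,|\beta|=1$ yields a uniform constant $A$ such that $\sum_{|\alpha|=1}\l|D_y^\alpha \dip{x}{y}\r|\leq A$ everywhere; replacing $d$ by $d/A$ preserves $d\approx\rho$ and the full derivative bound while enforcing the stated supremum bound of $1$.

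The principal obstacle is the first stage: because $\rho$ itself is typically only Lipschitz, a $C^\infty$ replacement with the non-isotropic derivative scaling $d^{1-|\alpha|-|\beta|}$ cannot be obtained by ordinary mollification and requires exploiting the covariance of the Nagel-Stein-Wainger charts under the family of anisotropic dilations adapted to the control geometry. Once this is in hand, symmetrization and normalization are essentially cosmetic.
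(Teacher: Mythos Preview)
The paper does not prove this proposition at all: it is stated with a citation to Nagel--Stein, \emph{Differentiable control metrics and scaled bump functions}, and no argument is given beyond the two cosmetic remarks about symmetrization and rescaling that are already built into the statement itself. So there is no proof in the paper to compare against.

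Your sketch is in the spirit of the cited Nagel--Stein construction: they also build $d$ out of the Nagel--Stein--Wainger scaled coordinate maps and scaled bump functions, and the mechanism by which the derivative estimate $|D_x^\alpha D_y^\beta d|\lesssim d^{1-|\alpha|-|\beta|}$ emerges is exactly the homogeneity of the $X_i$ under the anisotropic dilations in those charts. The regularization you describe---integrating a bump of the rescaled coordinate against $\delta^{-N-1}\,d\delta$---is one clean way to produce a smooth function of the correct homogeneity; the original paper organizes the construction somewhat differently (via families of scaled bump functions), but the underlying geometry is the same. Your identification of the main obstacle is accurate: ordinary isotropic mollification of $\rho$ cannot produce the non-isotropic bound $d^{1-|\alpha|-|\beta|}$, and the whole content of the cited result is precisely the careful use of the adapted charts. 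The symmetrization and normalization steps are, as you say, trivial once the first stage is done, and those are the only parts the present paper even comments on.
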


\begin{prop}\label{PropFiniteSpeed}
Suppose $u\l( x,t\r)\in C^2\l( M\times\l[ 0,T\r] \r)$
such that $\partial_t^2 u + \boxb u =0$, and $u=\partial_t u=0$ on the ball
$$\l\{ \l(y,0\r) :  d\l( x_0,y \r)\leq t_0 \r\}$$
where $x_0\in M$ and $0<t_0\leq T$.  Then $u$ vanishes in the region:
$$\Omega = \l\{ \l(y,t\r) : 0\leq t \leq t_0, d\l( x_0,y\r)\leq \l( t_0-t\r)  \r\}$$
\end{prop}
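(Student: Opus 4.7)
The plan is to carry out the classical energy argument for the wave equation (as on page~162 of \cite{FollandIntroductionToPartialDifferentialEquations} and in the appendix of \cite{MullerMarcinkiewiczMultipliersAndMultiParameterStructureOnHeisenbergGroupsLectureNotes}), now with the smooth sub-Riemannian surrogate $d$ from Proposition~\ref{PropSmoothMetric} in place of the Euclidean distance. The key geometric input is the ``eikonal inequality'' $\l(X_1 d\r)^2 + \l(X_2 d\r)^2 \leq 1$, which is immediate from $\l|X_1 d\r|+\l|X_2 d\r|\leq 1$ and plays the role of $|\nabla d|\leq 1$ in the classical proof. Set $B_\tau = \l\{y\in M: d(x_0,y) < t_0-\tau\r\}$ and define the local energy
$$E(\tau) = \int_{B_\tau} \l(|\partial_t u|^2 + |\dbarb u|^2\r)(y,\tau)\, dy,$$
so that $E(0) = 0$ by hypothesis. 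It suffices to prove the differential inequality $E'(\tau)\leq C E(\tau)$ on $[0,t_0]$, after which Gronwall forces $E \equiv 0$.

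Differentiating $E$ produces an interior piece $\int_{B_\tau}\partial_t\l(|\partial_t u|^2+|\dbarb u|^2\r)\, dy$ plus a moving-boundary piece of the form $-\int_{\partial B_\tau}\l(|\partial_t u|^2+|\dbarb u|^2\r)|\nabla d|^{-1}\, d\sigma$, coming from the shrinking of $B_\tau$ at unit $d$-speed. In the interior piece I substitute $\partial_t^2 u = -\dbarb^{*}\dbarb u$ and integrate by parts on $B_\tau$: the bulk contributions from $-\Re\l(\overline{\partial_t u}\cdot \dbarb^{*}\dbarb u\r)$ and $\Re\l(\overline{\dbarb u}\cdot \dbarb\partial_t u\r)$ cancel, leaving only a boundary flux on $\partial B_\tau$ bounded in modulus by $\int_{\partial B_\tau}|\partial_t u|\cdot|\dbarb u|\cdot\l[(X_1 d)^2+(X_2 d)^2\r]^{1/2}|\nabla d|^{-1}\, d\sigma$. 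Invoking the eikonal inequality together with AM-GM shows that this flux is dominated by the moving-boundary term, so the net boundary contribution to $E'(\tau)$ is $\leq 0$; this is the sub-Riemannian analogue of the classical cone computation.

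The main obstacle is that $\boxb$ is not literally $-\l(X_1^2+X_2^2\r)$: the commutator $[X_1,X_2]$ and the divergences of $X_1, X_2$ hidden in the formal adjoint $\dbarb^{*}$ give first- and zeroth-order corrections. After the integration by parts these produce bulk remainders of the form $|\partial_t u|\l(|u|+|\dbarb u|\r)$, which Cauchy-Schwarz bounds by $C\int_{B_\tau}\l(|u|^2+|\partial_t u|^2+|\dbarb u|^2\r) dy$. I absorb the $|u|^2$ term by enlarging the energy by $\int_{B_\tau}|u|^2\, dy$ and using $u(y,\tau)=\int_0^\tau\partial_t u(y,s)\, ds$ to bound this quantity by $\tau\int_0^\tau E(s)\, ds$. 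The resulting Gronwall inequality, together with $E(0)=0$, forces $E \equiv 0$ on $[0,t_0]$. Hence $\partial_t u \equiv \dbarb u \equiv 0$ on $\Omega$; combined with the initial condition $u(\cdot,0)\equiv 0$ on $B(x_0,t_0)$, integrating $\partial_t u = 0$ in $t$ gives $u \equiv 0$ on $\Omega$.
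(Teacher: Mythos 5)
Your overall strategy---an energy estimate over a shrinking $d$-ball, with an eikonal-type bound for $d$ letting the favorable moving-boundary term absorb the flux from integration by parts---is the same as the paper's, and your second paragraph is essentially correct. But your third paragraph contradicts the second and introduces a difficulty that does not exist. You correctly observe in paragraph two that after substituting $\partial_t^2 u=-\dbarb^{*}\dbarb u$ the bulk contributions from $-\Re\l(\overline{\partial_t u}\,\dbarb^{*}\dbarb u\r)$ and $\Re\l(\overline{\dbarb u}\,\dbarb\partial_t u\r)$ cancel \emph{exactly}: for any real vector field $X$ and bounded domain $\Omega$, $\int_\Omega a\l(Xb\r)-\l(X^{*}a\r)b\,dy = \int_{\partial\Omega} ab\,\l(X\cdot\nu\r)\,d\sigma$ is a pure boundary term, so the divergences hidden in $X_j^{*}=-X_j+g_j$ contribute nothing to the interior. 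One never expands $\boxb$ as $-\l(X_1^2+X_2^2\r)+\text{corrections}$; one uses the identity $u_{tt}+\dbarb^{*}\dbarb u=0$ as it stands. Consequently your ``bulk remainders of the form $\l|\partial_t u\r|\l(\l|u\r|+\l|\dbarb u\r|\r)$,'' the enlargement of the energy by $\int\l|u\r|^2$, the bound $u=\int_0^\tau\partial_t u$, and Gronwall's lemma are all unnecessary: you should get $E'\l(\tau\r)\leq 0$ directly, hence $E\equiv 0$ with no differential inequality to close. This is exactly what the paper shows, namely $\frac{d}{dt}E_\delta\leq 0$.

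The remaining, more cosmetic, divergence from the paper is the hard cutoff. You differentiate $E\l(\tau\r)=\int_{B_\tau}\l(\l|\partial_t u\r|^2+\l|\dbarb u\r|^2\r)dy$ and split the interior piece from a moving-boundary surface integral; making this rigorous requires coarea considerations and some regularity of the level sets of $d\l(x_0,\cdot\r)$, which you do not address. The paper instead defines
\begin{equation*}
E_\delta\l(t\r)=\frac{1}{2}\int\l(\l|\partial_t u\r|^2+\l|\dbarb u\r|^2\r)\chi_\delta\l(\frac{d\l(x_0,y\r)}{t_0-t}\r)dy,
\end{equation*}
with $\chi_\delta\in C_0^\infty$, $\chi_\delta'\leq 0$, approximating $\mathbf{1}_{(-\infty,1]}$. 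Then $\frac{d}{dt}E_\delta=I+II$ with $II\leq 0$, and after integration by parts $I$ reduces to $\Re\int\ip{u_{tt}+\boxb u}{u_t}\chi_\delta=0$ plus a volume term in which a single derivative falls on $\chi_\delta$; the normalization $\sup_{x\ne y}\sum_{\l|\alpha\r|=1}\l|D_y^\alpha d\l(x,y\r)\r|\leq 1$ from Proposition \ref{PropSmoothMetric} (the $\ell^1$ version, which is what the paper actually invokes rather than your $\l(X_1d\r)^2+\l(X_2d\r)^2\leq 1$) together with $2\l|ab\r|\leq\l|a\r|^2+\l|b\r|^2$ dominates that term by $\l|II\r|$. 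Taking $\delta\to 0$ and using dominated convergence gives $E_0\l(t\r)\leq E_0\l(0\r)=0$, with all integrals being volume integrals throughout.
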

\begin{proof}
Given $\delta>0$, small, let $\chi_\delta\in C_0^{\infty}\l( \R\r)$ be such that
$\chi_\delta\l( s\r)$ is equal to $1$ when $s\leq 1$, $\chi_\delta\l( s\r) = 0$ for $s\geq 1+\delta$, and $\chi_\delta'\leq 0$, and let $\chi_0$ be the characteristic function
of $\l(-\infty,1\r]$.  
Note that $\lim_{\delta\rightarrow 0} \chi_\delta = \chi_0$, pointwise.
Define, for $\delta\geq 0$:
$$E_{\delta}\l( t\r) = \frac{1}{2} \int \l(\l| \partial_t u\l( y,t\r)\r|^2 + \l| \l( X_1+iX_2\r)u(y,t)\r|^2\r) \chi_\delta\l( \frac{d\l(x_0,y\r)}{t_0-t}\r) dy$$
where $X_1$ and $X_2$ are acting in the $y$ variable.

Consider, writing $\ip{a}{b}=a\overline{b}$, we have for $\delta>0$:
\begin{equation*}
\begin{split}
\frac{dE_\delta}{dt}\l( t\r) &= \Re \int \l( \ip{u_{tt}}{u_t} + \ip{\l(X_1+iX_2\r) u}{\l( X_1+iX_2\r) u_t}  \r) \chi_\delta\l( \frac{d\l( x_0,y\r)}{t_0-t} \r) dy
\\ &\qquad+  \frac{1}{2}\int \l(\l| u_t\r|^2 + \l|\l(X_1+iX_2\r) u \r|^2\r) \chi_\delta'\l( \frac{d\l( x_0,y\r)}{t_0-t} \r)\frac{d\l( x_0,y\r)}{\l(t_0-t\r)^2} dy\\
&=: I+ II
\end{split}
\end{equation*}
Since $\chi_\delta'\leq 0$, $II$ is clearly negative.  We will show
that (for $t< t_0$), $\l| I\r| \leq \l| II\r|$, and then it will follow that
$\frac{dE_\delta}{dt}\l(t\r)\leq 0$.

We use the fact that $X_1^{*}=-X_1+g$, where $g\in C^\infty\l( M\r)$
and similarly for $X_2$ to see that:
\begin{equation*}
\begin{split}
\l| I\r| &\leq \l| \int \l( \ip{u_{tt}}{u_t} + \ip{\l( -X_1^{*} +i X_2^{*}\r) \l( X_1+iX_2\r) u}{u_t} \r) \chi_\delta\l( \frac{d\l(x_0,y\r)}{t_0-t} \r)  dy \r|\\
&\quad + \sum_{\l|\alpha\r|=1}\l| \int \l( \ip{\l( X_1+iX_2\r) u}{u_t} \r)\l(  D_y^{\alpha} \l(\chi_\delta\l(\frac{d\l( x_0,y\r)}{t_0-t}\r)\r)\r) dy  \r|\\
&=: III+IV
\end{split}
\end{equation*}
Now the integrand of $III$ contains the term:
\begin{equation*}
\ip{u_{tt}}{u_t} + \ip{\l( -X_1^{*} +i X_2^{*}\r) \l( X_1+iX_2\r) u}{u_t} = \ip{u_{tt} + \boxb u}{u_t} =0
\end{equation*}
and it follows that $III=0$.  To bound $IV$, note that:
\begin{equation*}
\begin{split}
\l| \sum_{\l|\alpha\r|=1} D_y^{\alpha} \l(\chi_\delta\l( \frac{d\l( x_0,y\r)}{t_0-t} \r)\r)\r|&\leq  - \chi_\delta' \l( \frac{d\l( x_0,y\r)}{t_0-t} \r) \sum_{\l|\alpha\r|=1}\l|\frac{D_y^{\alpha} d\l( x_0,y\r)}{t_0-t} \r|\\
&\leq - \chi_\delta' \l( \frac{d\l( x_0,y\r)}{t_0-t} \r) \frac{d\l( x_0,y\r)}{\l(t_0-t\r)^2}
\end{split}
\end{equation*}
In the last line, we used that $\frac{d\l(x_0,y\r)}{t_0-t}\geq 1$
on the support of $\chi_\delta' \l( \frac{d\l( x_0,y\r)}{t_0-t} \r)$.

Thus, we have that:
\begin{equation*}
\begin{split}
IV &\leq \int \l| \ip{\l(X_1+iX_2\r)u}{u_t} \r|\l(-\chi_\delta' \l( \frac{d\l( x_0,y\r)}{t_0-t} \r) \r) \frac{d\l( x_0,y\r)}{\l(t_0-t\r)^2} \\
&\leq \frac{1}{2}\int \l(\l| u_t\r|^2 + \l| \l(X_1+iX_2\r)u \r|^2 \r) \l(-\chi_\delta' \l( \frac{d\l( x_0,y\r)}{t_0-t} \r) \r) \frac{d\l( x_0,y\r)}{\l(t_0-t\r)^2}\\
&= \l|II\r|
\end{split}
\end{equation*}
Hence, $\l| I\r| \leq \l|II\r|$, and therefore $\frac{dE_\delta}{dt}\l( t\r)\leq 0$
for all $0\leq t<t_0$.  It follows that 
\begin{equation}\label{EqnEdeltaDecreasing}
E_\delta\l( t\r) \leq E_\delta\l(0\r)
\end{equation}
for all $0\leq t\leq t_0$.  Taking the limit of both sides of
(\ref{EqnEdeltaDecreasing}) as $\delta\rightarrow 0$ and applying
dominated convergence, we see:
$$E_0\l( t\r) \leq E_0\l(0\r)$$
for all $0\leq t\leq t_0$.

However, by our assumption on $u$, $E_0\l( 0\r)=0$.  It follows that
$E_0\l(t\r)=0$ for all $0\leq t\leq t_0$, and in particular $\partial_t u=0$
on $\Omega$.  It follows that $u\l(y,t\r)=0$ on $\Omega$.
\end{proof}
Now Theorem \ref{ThmFiniteProp} will follow immediately from the
following corollary:
\begin{cor}
For $t>0$,
$$\supp\l(K_{\cos\l(t\sqrt{\boxb}\r)}\r)\subseteq \l\{ \l(x,y\r): d\l( x,y\r) \leq t \r\}$$
\end{cor}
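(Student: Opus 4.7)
The plan is to derive the support statement from the energy estimate in Proposition \ref{PropFiniteSpeed}, applied to solutions of the wave equation built via the spectral calculus from test data of small support.

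First, for $\phi\in C_0^\infty\l(M\r)$, I will verify that $u\l(y,s\r):=\l(\cos\l(s\sqrt{\boxb}\r)\phi\r)\l(y\r)$ is a classical solution of $\partial_s^2 u+\boxb u=0$ on $M\times\R$ with $u\l(\cdot,0\r)=\phi$ and $\partial_s u\l(\cdot,0\r)=0$. Since $\boxb^k\phi\in L^2\l(M\r)$ for every $k$, the spectral theorem gives $\LpN{2}{\boxb^k u\l(\cdot,s\r)}{M}\leq \LpN{2}{\boxb^k\phi}{M}$ uniformly in $s$. Writing $u=\l(1-\pi\r)u+\pi\phi$, which uses that $\cos\l(s\sqrt{\boxb}\r)$ acts as the identity on $\ker\boxb$ and hence $\pi u=\pi\phi$, and applying Theorem \ref{ThmScaledEst} to $\l(1-\pi\r)u$, one obtains pointwise smoothness of $u$ in $y$; smoothness in $s$ then follows inductively from $\partial_s^2 u=-\boxb u$ together with the initial data, so $u\in C^\infty\l( M\times\R\r)$.

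Second, fix $\l(x_0,y_0\r)$ with $d\l(x_0,y_0\r)>t$, choose $\epsilon>0$ with $t+2\epsilon<d\l(x_0,y_0\r)$, and take $\phi\in C_0^\infty\l(M\r)$ supported in $\l\{y:d\l(y_0,y\r)\leq\epsilon\r\}$. Setting $t_0:=d\l(x_0,y_0\r)-\epsilon>t$, the triangle inequality shows that any $y$ with $d\l(x_0,y\r)\leq t_0$ satisfies $d\l(y_0,y\r)\geq\epsilon$, so $u\l(\cdot,0\r)=\phi$ vanishes on $\l\{y:d\l(x_0,y\r)\leq t_0\r\}$; combined with $\partial_s u\l(\cdot,0\r)=0$, the hypotheses of Proposition \ref{PropFiniteSpeed} hold with center $x_0$ and time $t_0$. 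The conclusion gives $u\l(x_0,t\r)=0$, i.e., $\l(\cos\l(t\sqrt{\boxb}\r)\phi\r)\l(x_0\r)=0$. The same argument applies verbatim with $x_0$ replaced by any $x$ satisfying $d\l(x,y_0\r)>t+\epsilon$, so $\cos\l(t\sqrt{\boxb}\r)\phi$ vanishes on an open neighborhood of $x_0$. Letting $\phi$ range over $C_0^\infty\l(\l\{d\l(y_0,\cdot\r)\leq\epsilon\r\}\r)$, the Schwartz kernel $K_{\cos\l(t\sqrt{\boxb}\r)}$ is seen to vanish on a neighborhood of $\l(x_0,y_0\r)$ in $M\times M$; since $\l(x_0,y_0\r)$ was arbitrary subject to $d\l(x_0,y_0\r)>t$, the support statement follows.

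The main obstacle is the regularity step: Proposition \ref{PropFiniteSpeed} demands a genuine $C^2$ solution of the wave equation, so one must upgrade the a priori abstract object $\cos\l(s\sqrt{\boxb}\r)\phi$ produced by the spectral theorem to a pointwise smooth function of $\l(y,s\r)$ before invoking the energy argument. This is exactly where Theorem \ref{ThmScaledEst} enters; after that, the proof is a direct application of the cone geometry in Proposition \ref{PropFiniteSpeed} combined with the triangle inequality.
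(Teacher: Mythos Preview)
Your approach is essentially the same as the paper's: establish that $u=\cos\l(s\sqrt{\boxb}\r)\phi$ is a classical solution via the splitting $u=\pi\phi+\l(1-\pi\r)u$ and Theorem~\ref{ThmScaledEst}, then feed it into Proposition~\ref{PropFiniteSpeed} with data supported in a small $d$-ball. The paper places $\phi$ near $x_0$ and centers the cone at $y_0$, you do the reverse, but that is immaterial.

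There is one technical slip. You invoke ``the triangle inequality'' for $d$ to conclude that $d\l(x_0,y\r)\leq d\l(x_0,y_0\r)-\epsilon$ forces $d\l(y_0,y\r)\geq\epsilon$. But Proposition~\ref{PropSmoothMetric} only asserts $d\approx\rho$, symmetry, and derivative bounds; $d$ is not claimed to be a metric, so the exact triangle inequality is unavailable. The paper avoids this by choosing $\epsilon$ via a continuity argument: since $d\l(x_0,y_0\r)>t$, one can pick $\epsilon$ small enough that $d\l(x,y\r)>t+\epsilon$ for all $x\in B_d\l(x_0,\epsilon\r)$ and $y\in B_d\l(y_0,\epsilon\r)$, which yields the needed disjointness directly. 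Replacing your triangle-inequality step with this choice of $\epsilon$ fixes the argument with no change to the overall structure.
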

\begin{proof}
For this proof, define $B_d\l( x, \delta\r)=\l\{y\in M: d\l( x,y\r) <\delta\r\}$.
Fix $x_0,y_0\in M$ and $t_1>0$ such that $d\l( x_0,y_0\r) >t_1$.  Fix $\epsilon>0$ so small that
for all $x\in B_d\l( x_0, \epsilon\r)$ and $y\in B_d\l( y_0, \epsilon\r)$, we
have $d\l( x,y\r) >t_1+\epsilon$.  We will show that for every
$\phi\in C_0^\infty\l(B_d\l(x_0, \epsilon\r)\r)$, $\psi\in C_0^\infty\l(B_d\l(y_0,\epsilon\r) \r)$,
we have:
$$\int \psi\l(z\r) \l(\cos\l(t_1\sqrt{\boxb}\r) \phi\r)\l(z\r) dz = 0 $$
and the claim will follow.

Define $u\l( x, t\r) = \cos\l( t\sqrt{\boxb}\r)\phi$.  We first claim
that $u\in C^\infty\l( M\times \R\r)$.  Note that,
\begin{equation*}
u  = \pi \phi + \cos\l( t\sqrt{\boxbt}\r) \phi =: u_1+u_2
\end{equation*}
$u_1$ is independent of $t$ and is $C^\infty$ since $\pi$ is an NIS operator.
Fix $t$ and note that $\pi u_2\l(\cdot, t\r)=0$.  Thus, since
$$\boxb^j u_2 = \cos\l( t\sqrt{\boxbt}\r) \boxb^j \phi\in L^2\l( M\r)$$
for each fixed $t$, we have that $u_2\l( t, \cdot\r)$ is in $C^\infty$
for each fixed $t$.  Moreover, since $\partial_t^{4N} u_2 = \boxb^{2N} u_2\in L^2\l(M\r)$
for every $N$ (in distribution), the usual Fourier inversion trick now
shows that $u_2\in C^\infty\l(M\times \R \r)$.  It follows that $u\in C^\infty\l( M\times \R\r)$.

Thus, we are in a position to apply Proposition \ref{PropFiniteSpeed}.
Note that $u\l(y,0\r)= 0 =\partial_t u\l( y,0\r)$ for all $y\in B_d\l( y_0, t_1+\epsilon\r)$.  Taking $t_0=t_1+\epsilon$ in Proposition \ref{PropFiniteSpeed}
and taking the $t=t_1$ slice of $\Omega$ we see that:
$$u\l( y ,t_1\r)=0$$
for all $y\in B_d\l( y_0, \epsilon\r)$.  Hence,
$$\int \psi\l(y\r) u\l( t_1,y\r) dy=0$$
completing the proof.
\end{proof}

\begin{rmk}
Note that $\cos\l( t\sqrt{\boxbt} \r)$ does not have finite 
propagation speed.  This is essential in understanding why we do not
get off-diagonal Gaussian bounds for $e^{-t\boxbt}$.
\end{rmk}

\begin{cor}\label{CorFourierTransform0}
Suppose $\widehat{F}$ is the Fourier transform of an even bounded Borel
function $F$ with $\supp \widehat{F}\subseteq \l[-r,r\r]$.  Then,
$$\supp \l(K_{F\l( \sqrt{\boxb} \r)}\r) \subseteq \l\{ \l(x,y\r) : \rp{x}{y}\leq \kappa r \r\}$$
\end{cor}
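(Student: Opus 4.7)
The plan is to express $F(\sqrt{\boxb})$ as a superposition of the wave operators $\cos(t\sqrt{\boxb})$ via Fourier inversion, and then invoke Theorem \ref{ThmFiniteProp} on each piece. Concretely, if $F$ were a Schwartz function with $\supp\widehat{F}\subseteq[-r,r]$, then evenness of $F$ (and hence of $\widehat{F}$) together with Fourier inversion gives
\[
F(\lambda) \;=\; \frac{1}{2\pi}\int_{-r}^{r}\widehat{F}(t)\cos(t\lambda)\,dt,
\]
and by the functional calculus this identity transfers to
\[
F(\sqrt{\boxb}) \;=\; \frac{1}{2\pi}\int_{-r}^{r}\widehat{F}(t)\cos\bigl(t\sqrt{\boxb}\bigr)\,dt,
\]
understood as a weak (or strong) operator integral, since $\cos(t\sqrt{\boxb})$ is uniformly bounded on $L^2(M)$ and $\widehat F$ is integrable.

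Given $\phi,\psi\in C_0^\infty(M)$ whose supports satisfy $\rho(\supp\phi,\supp\psi)>\kappa r$, I would pair the above identity and compute
\[
\ip{F(\sqrt{\boxb})\phi}{\psi} \;=\; \frac{1}{2\pi}\int_{-r}^{r}\widehat{F}(t)\,\ip{\cos(t\sqrt{\boxb})\phi}{\psi}\,dt.
\]
For each $|t|\leq r$, Theorem \ref{ThmFiniteProp} forces $\supp\bigl(\cos(t\sqrt{\boxb})\phi\bigr)\subseteq\{y:\rho(y,\supp\phi)\leq\kappa|t|\leq\kappa r\}$, which is disjoint from $\supp\psi$; hence the inner product vanishes identically in $t$, and the whole integral is zero. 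Letting $\phi,\psi$ vary, this yields the stated support property of $K_{F(\sqrt{\boxb})}$.

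The main obstacle is that $F$ is only assumed bounded Borel, so a priori $\widehat F$ is merely a compactly supported tempered distribution, and the Fourier inversion used above is not literal. I would handle this by a routine mollification: let $\eta\in C_0^\infty(\R)$ be a standard even bump of mass one supported in $[-1,1]$, set $\eta_\epsilon(t)=\epsilon^{-1}\eta(t/\epsilon)$, and put $F_\epsilon(\lambda):=F(\lambda)\,\widehat{\eta_\epsilon}(\lambda)$. Then $\widehat{F_\epsilon}=\widehat F*\eta_\epsilon$ is a smooth (Schwartz, after an additional cutoff) function supported in $[-r-\epsilon,\,r+\epsilon]$, the formula of the first paragraph applies to $F_\epsilon$, and the argument of the second paragraph yields $\supp K_{F_\epsilon(\sqrt{\boxb})}\subseteq\{\rho(x,y)\leq\kappa(r+\epsilon)\}$. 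Since $\widehat{\eta_\epsilon}(\lambda)\to 1$ boundedly and pointwise, $F_\epsilon(\sqrt{\boxb})\to F(\sqrt{\boxb})$ in the strong operator topology by dominated convergence applied to the spectral measure, so $K_{F_\epsilon(\sqrt{\boxb})}\to K_{F(\sqrt{\boxb})}$ in $C_0^\infty(M\times M)'$, and taking $\epsilon\downarrow 0$ gives the desired inclusion $\supp K_{F(\sqrt{\boxb})}\subseteq\{\rho(x,y)\leq\kappa r\}$.
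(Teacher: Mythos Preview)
Your argument is correct and follows essentially the same route as the paper, which simply defers to Lemma~3 of \cite{SikoraRieszTransformGaussianBoundsAndTheMethodOfWaveEquation}: write $F(\sqrt{\boxb})$ as a superposition of the wave operators $\cos(t\sqrt{\boxb})$ via Fourier inversion and then apply Theorem~\ref{ThmFiniteProp} termwise. Your mollification step to handle the merely bounded Borel case is a standard way of making the distributional Fourier inversion rigorous; an equally common alternative (and closer to how such lemmas are often phrased) is to observe that $\widehat F$ is a compactly supported distribution and that $t\mapsto\langle\cos(t\sqrt{\boxb})\phi,\psi\rangle$ is $C^\infty$ (this was shown in the course of proving Proposition~\ref{PropFiniteSpeed}), so one may pair $\widehat F$ directly with this smooth function and avoid the limiting argument---but both approaches are equivalent in substance.
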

\begin{proof}
This follows just as Lemma 3 of \cite{SikoraRieszTransformGaussianBoundsAndTheMethodOfWaveEquation}, using Theorem \ref{ThmFiniteProp}.
\end{proof}

\section{Off Diagonal Bounds}\label{SectionOffDiag}
	In this section, we complete the proof of Theorem \ref{ThmMainBounds}
by proving the bounds on $K_{e^{-t\boxb}}\l( x,y\r)$ when $t<\frac{\rp{x}{y}^2}{\kappa^2}$; to do so, we modify the proof of Theorem 4 of \cite{SikoraRieszTransformGaussianBoundsAndTheMethodOfWaveEquation}.  We use the same notation
as \cite{SikoraRieszTransformGaussianBoundsAndTheMethodOfWaveEquation} to
make our modifications obvious.
Fix $x_0,y_0\in M$, $t>0$, with $t<\frac{\rp{x_0}{y_0}^2}{\kappa^2}$.

Fix a function $\psi\in C^\infty\l( \R\r)$, satisfying
\begin{equation*}
\psi\l(x\r) =
\begin{cases}
0 & \text{ if $x\leq -1$,}\\
1 & \text{ if $x\geq -\frac{1}{2}$}
\end{cases}
\end{equation*}
and for $s>1$ define $\phi_s\l( x\r) = \psi\l( s\l( \l|x\r|-s\r)\r)$.
Define:
\begin{equation*}
\begin{split}
F_s\l( x \r) &= \phi_s\l( x\r) \frac{1}{\sqrt{4\pi}} e^{-\frac{x^2}{4}}\\
R_s\l( x\r) &= \l( 1- \phi_s\l( x\r)\r) \frac{1}{\sqrt{4\pi}} e^{-\frac{x^2}{4}}
\end{split}
\end{equation*}
so that $\widehat{F}_s\l( \lambda\r) + \widehat{R}_s\l( \lambda\r) = e^{-\lambda^2}$ (here $\widehat{F}_s$ denotes the Fourier transform of $F_s$)
and therefore $\widehat{F}_s\l( \sqrt{t\boxb}\r) + \widehat{R}_s\l( \sqrt{t\boxb}\r) = e^{-t\boxb}$.
In \cite{SikoraRieszTransformGaussianBoundsAndTheMethodOfWaveEquation}
equation (5.2) it is shown that for every natural number $N$ there
exists a $C$ such that for all $s>1$,
\begin{equation}\label{EqnBoundForFs}
\l|\widehat{F}_s\l(\lambda\r) \r| \leq C \frac{1}{s\l(1+\frac{\lambda^2}{s^2}\r)^N}e^{-\frac{s^2}{4}}
\end{equation}
Note, also, that $\supp\l( R_s\r) \subseteq \l[ -s+\frac{1}{2s}, s-\frac{1}{2s}\r]$ and thus if we set $s_{x_0,y_0} = \frac{\rp{x_0}{y_0}}{\kappa\sqrt{t}}$,
Corollary \ref{CorFourierTransform0} tells us that
$$D_x^{\alpha} D_y^{\beta} K_{\widehat{R}_{s_{x_0,y_0}}\l(\sqrt{t\boxb}\r)}\l( x, y\r)\bigg|_{\substack{x=x_0\\y=y_0}} =0$$
Thus,
\begin{equation}\label{EqnTwoParts}
\begin{split}
&D_x^\alpha D_y^\beta K_{e^{-t\boxb}}\l(x,y\r)\bigg|_{\substack{x=x_0\\y=y_0}}= D_x^{\alpha}D_y^{\beta}K_{\widehat{F}_{s_{x_0,y_0}}\l(\sqrt{t\boxb}\r)}\l(x,y\r)\bigg|_{\substack{x=x_0\\y=y_0}}\\
&\quad= D_x^{\alpha} D_y^{\beta} K_{\widehat{F}_{s_{x_0,y_0}}\l(\sqrt{t\boxbt}\r)}\l(x,y\r)\bigg|_{\substack{x=x_0\\y=y_0}} + \widehat{F}_{s_{x_0,y_0}}\l( 0\r) D_x^{\alpha} D_y^{\beta} K_\pi\l( x,y\r)\bigg|_{\substack{x=x_0\\y=y_0}}
\end{split}
\end{equation}
We bound these two terms separately.\footnote{The second term is the main difference
between this proof, and the one in \cite{SikoraRieszTransformGaussianBoundsAndTheMethodOfWaveEquation}.}
First, we start with the second term.  We use the bound (\ref{EqnBoundForFs})
with $\lambda=0$ and the fact that $\pi$ is an NIS operator of order $0$ to see:
\begin{equation}\label{EqnBoundSzegoPart}
\begin{split}
&\l|\widehat{F}_{s_{x_0,y_0}}\l( 0\r) D_x^{\alpha} D_y^{\beta} K_\pi\l( x,y\r)\bigg|_{\substack{x=x_0\\y=y_0}}\r| \lesssim \frac{1}{s_{x_0,y_0}} e^{-\frac{s_{x_0,y_0}^2}{4}} \frac{\rp{x_0}{y_0}^{-\l| \alpha\r|-\l|\beta\r|}}{V\l( x_0, \rp{x_0}{y_0}\r)}\\
&\qquad\approx \frac{\sqrt{t}}{\rp{x_0}{y_0}} e^{-\frac{\rp{x_0}{y_0}^2}{4\kappa^2t}} \frac{\rp{x_0}{y_0}^{-\l| \alpha\r|-\l|\beta\r|}}{V\l( x_0, \rp{x_0}{y_0}\r)}\\
&\qquad\lesssim \frac{\sqrt{t}}{\rp{x_0}{y_0}} \l(\frac{\rp{x_0}{y_0}}{t}\r)^{\l|\alpha\r|+\l|\beta\r|}\frac{1}{V\l( x_0, \rp{x_0}{y_0}\r)}e^{-\frac{\rp{x_0}{y_0}^2}{4\kappa^2t}}
\end{split}
\end{equation}
where in the last line, we have used the fact that
$$\frac{1}{\rp{x_0}{y_0}}\lesssim \frac{\rp{x_0}{y_0}}{t}$$
(\ref{EqnBoundSzegoPart}) is even better than the bound in the conclusion of
Theorem \ref{ThmMainBounds}.

We now turn to the first term in the last line of (\ref{EqnTwoParts}).
Let $J\l( \lambda\r)$ be a measurable function such that
$J\l( \lambda\r)^2 =\widehat{F}_{s_{x_0,y_0}}\l( t^{\frac{1}{2}}\lambda\r)$ (we suppress
$J$s dependence on $s_{x_0,y_0}$).  Note that, by (\ref{EqnBoundForFs}),
we have for every $N\geq 0$:
\begin{equation}\label{EqnBoundForJs}
\sup_{\lambda\geq 0} \l| J\l(\lambda\r) \l(1+ \frac{\lambda^2 t^2}{\rp{x_0}{y_0}^2}\r)^N \r|\lesssim \frac{1}{\sqrt{\rp{x_0}{y_0}t^{-\frac{1}{2}}}} e^{-\frac{\rp{x_0}{y_0}^2}{8\kappa^2 t}}
\end{equation}
Then we have, by Lemma \ref{LemmaSikIneqs} and Proposition \ref{PropOnDiagPolyBound} (taking $N$ large, depending on $\gamma$):
\begin{equation*}
\begin{split}
&\LpN{2}{D_x^{\gamma} K_{J\l(\sqrt{\boxbt}\r)}\l( x, \cdot\r)}{M} \lesssim \frac{e^{-\frac{\rp{x_0}{y_0}^2}{8\kappa t}}}{\sqrt{\rp{x_0}{y_0}t^{-\frac{1}{2}}}} \LpN{2}{D_x^\gamma K_{\l(I+\frac{t^2\boxbt}{\rp{x_0}{y_0}^2}\r)^{-N}}\l(x,\cdot\r)}{M}\\
&\qquad \lesssim \frac{e^{-\frac{\rp{x_0}{y_0}^2}{8\kappa t}}}{\sqrt{\rp{x_0}{y_0}t^{-\frac{1}{2}}}} \l(\frac{\rp{x_0}{y_0}}{t}\r)^{\l|\gamma\r|} V\l( x, \frac{t}{\rp{x_0}{y_0}}\r)^{-\frac{1}{2}}
\end{split}
\end{equation*}
Applying Lemma \ref{LemmaSikIneqs} again, we see:
\begin{equation*}
\begin{split}
&\l| D_x^{\alpha}D_y^\beta K_{\widehat{F}_{s_{x_0,y_0}}\l(\sqrt{t\boxbt}\r)}\l(x,y\r)\bigg|_{\substack{x=x_0\\y=y_0}} \r| \\
&\quad \lesssim \LpN{2}{D_x^{\alpha} K_{J\l(\sqrt{\boxbt}\r)}\l(x,\cdot\r)\bigg|_{x=x_0}}{M} \LpN{2}{D_y^{\beta} K_{J\l(\sqrt{\boxbt}\r)}\l(y,\cdot\r)\bigg|_{y=y_0}}{M}\\
&\quad \lesssim \frac{e^{-\frac{\rp{x_0}{y_0}^2}{4\kappa t}}}{\rp{x_0}{y_0}t^{-\frac{1}{2}}}
\l(\frac{\rp{x_0}{y_0}}{t}\r)^{\l| \alpha\r| + \l|\beta\r|} V\l(x_0, \frac{t}{\rp{x_0}{y_0}}\r)^{-\frac{1}{2}} V\l( y_0,\frac{t}{\rp{x_0}{y_0}}\r)^{-\frac{1}{2}}
\end{split}
\end{equation*}
Now Theorem \ref{ThmMainBounds} will follow directly from
the following lemma:
\begin{lemma}
\begin{equation*}
V\l(x_0, \frac{t}{\rp{x_0}{y_0}}\r)^{-1}, V\l( y_0,\frac{t}{\rp{x_0}{y_0}}\r)^{-1}\lesssim \l(\frac{\rp{x_0}{y_0}^2}{t}\r)^Q\frac{1}{V\l(x_0, \rp{x_0}{y_0}\r)}
\end{equation*}
\end{lemma}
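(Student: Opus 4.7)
The plan is to reduce both inequalities to direct applications of the doubling property in Proposition \ref{PropHomogType}, using the triangle inequality to swap ball centers when needed. Set $\delta = \rp{x_0}{y_0}$ and $r = t/\delta$. The desired bounds rearrange to
\begin{equation*}
V\l(x_0,\delta\r) \lesssim \l(\delta/r\r)^Q V\l(x_0, r\r), \qquad V\l(x_0,\delta\r) \lesssim \l(\delta/r\r)^Q V\l(y_0, r\r).
\end{equation*}
The hypothesis $t < \rp{x_0}{y_0}^2/\kappa^2$ translates into $\delta/r > \kappa^2$, so the ratio $\delta/r$ is bounded below by a fixed positive constant (but not necessarily by $1$ if $\kappa<1$).

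First I would treat the estimate at $x_0$. Writing $\delta = (\delta/r)\cdot r$, if $\delta/r \geq 1$ then Proposition \ref{PropHomogType} applied with $\gamma = \delta/r$ yields $V(x_0,\delta) \lesssim (\delta/r)^Q V(x_0, r)$ directly. If instead $\kappa^2 \leq \delta/r < 1$, then $r$ is only a bounded multiple of $\delta$, so two applications of Proposition \ref{PropHomogType} (with $\gamma = r/\delta$ and $\gamma = 1$) give $V(x_0,r) \approx V(x_0,\delta)$ while $(\delta/r)^Q \geq \kappa^{2Q}$ is a positive constant; the inequality follows in this edge case as well.

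For the estimate at $y_0$, I would first move the center via the triangle inequality: since $\rp{x_0}{y_0} = \delta$, we have $B(x_0,\delta) \subseteq B(y_0, 2\delta)$, hence
\begin{equation*}
V\l(x_0,\delta\r) \leq V\l(y_0, 2\delta\r) \lesssim V\l(y_0,\delta\r),
\end{equation*}
the last step being the doubling bound from Proposition \ref{PropHomogType} with $\gamma = 2$. Then applying the same case analysis as in the first step, now centered at $y_0$, yields $V(y_0,\delta) \lesssim (\delta/r)^Q V(y_0, r)$, and chaining the two inequalities gives the second claim.

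There is no real obstacle here; the lemma is a purely geometric consequence of the doubling condition, and the only subtlety is the edge case where $\kappa < 1$ lets $r$ slightly exceed $\delta$, which is absorbed by doubling since $\delta/r$ remains bounded below by $\kappa^2$.
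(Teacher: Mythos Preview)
Your proposal is correct and follows essentially the same route as the paper: apply the doubling estimate of Proposition~\ref{PropHomogType} directly for the ball centered at $x_0$, and for the ball centered at $y_0$ first swap centers via $V\l(x_0,\rp{x_0}{y_0}\r)\approx V\l(y_0,\rp{x_0}{y_0}\r)$ (which is exactly your triangle-inequality-plus-doubling step). You are simply more explicit than the paper about the harmless edge case $\kappa<1$.
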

\begin{proof}
For $V\l(x_0, \frac{t}{\rp{x_0}{y_0}}\r)^{-1}$ this follows directly from
Proposition \ref{PropHomogType}.  For $V\l( y_0,\frac{t}{\rp{x_0}{y_0}}\r)^{-1}$
this follows from Proposition \ref{PropHomogType} and the fact that
$$V\l( y_0,\rp{x_0}{y_0}\r)\approx V\l(x_0,\rp{x_0}{y_0}\r)$$
(Which is also a consequence of Proposition \ref{PropHomogType}.)
\end{proof}
Corollary \ref{CorCleanerMainBound} is a simple corollary of 
Theorem \ref{ThmMainBounds}.

%\section{Multipliers Old}
%	\input{multipliersold}

\section{Multipliers}\label{SectionMultipliers}
	In this section, we prove Theorems \ref{ThmMultipliers} and \ref{ThmMultipliersLessSmooth}.  We prove the two in tandem, as some of
the estimates needed for Theorem \ref{ThmMultipliersLessSmooth}
are sharper than those needed for Theorem \ref{ThmMultipliers},
however Theorem \ref{ThmMultipliers} will allow us to create
a convenient Littlewood-Paley square function, with which we will
complete the proof of Theorem \ref{ThmMultipliersLessSmooth}.
The arguments in this section are closely related to those
of \cite{MullerMarcinkiewiczMultipliersAndMultiParameterStructureOnHeisenbergGroupsLectureNotes}, however, since we are not in the case of a stratified
group, we are forced to deviate from those arguments (in particular,
this is why we have $\frac{Q+1}{2}$ in Theorem \ref{ThmMultipliersLessSmooth},
instead of $\frac{Q}{2}$).

\begin{prop}\label{PropMultCptSupp}
Suppose $m$ is supported in $\l[\frac{1}{4},4\r]$, $r>0$ is fixed, $\alpha,\beta$ are fixed ordered
multi-indices, $a>\frac{Q+1}{2}+\l|\alpha\r|\vee\l|\beta\r|$,
and $\LtaN{a}{m}<\infty$.  Then, for every $\frac{Q}{2}+\l|\alpha\r|\vee \l|\beta\r|< b\leq a-\frac{1}{2}$, there exists a $C=C\l(\alpha,\beta, a, b, \LtaN{a}{m}\r)$, but not depending on $r$, such that:
\begin{equation*}
\l| D_x^\alpha D_y^\beta K_{m\l(r^2\boxb\r)} \l(x,y\r)\r|\leq C \l( 1+\frac{\rp{x}{y}}{r}\r)^{-a+\frac{1}{2}+b}\frac{\l( r\vee \rp{x}{y}\r)^{-\l|\alpha\r|-\l|\beta\r|}}{V\l(x, \rp{x}{y}+r\r)}
\end{equation*}
\end{prop}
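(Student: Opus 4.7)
Since $\supp m \subseteq [1/4,4]$, the continuous representative satisfies $m(0)=0$, so $m(r^2\boxb)=m(r^2\boxbt)$. Put $g(\lambda)=m(\lambda^2)$, extended as an even function on $\R$; then $g$ is supported in $[-2,-\tfrac{1}{2}]\cup[\tfrac{1}{2},2]$, $\|g\|_{L^2_a(\R)}\lesssim \LtaN{a}{m}$, and $m(r^2\boxb)=g(r\sqrt{\boxb})$. By Fourier inversion for even functions,
\begin{equation*}
m(r^2\boxb) = (2\pi)^{-1/2}\int_\R \widehat{g}(\eta)\cos(r\eta\sqrt{\boxb})\,d\eta,
\end{equation*}
which together with Theorem \ref{ThmFiniteProp} and Corollary \ref{CorFourierTransform0} drives the proof, adapting the off-diagonal argument from Section \ref{SectionOffDiag} to a general spectrally-localized multiplier.

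Fix $(x_0,y_0)$ and pick $\phi\in C_0^\infty(\l[-2,2\r])$ with $\phi\equiv 1$ on $[-1,1]$. Set $\sigma=1+\rp{x_0}{y_0}/r$ and split $g=g_{sm}+g_{lg}$ via $\widehat{g_{sm}}(\eta)=\phi(c\eta/\sigma)\widehat{g}(\eta)$, with $c$ small enough that the support of $\widehat{g_{sm}}$, scaled by $\kappa r$, is strictly less than $\rp{x_0}{y_0}$. Corollary \ref{CorFourierTransform0} then forces $K_{g_{sm}(r\sqrt{\boxb})}(x_0,y_0)=0$, so
\begin{equation*}
K_{m(r^2\boxb)}(x_0,y_0) = K_{g_{lg}(r\sqrt{\boxbt})}(x_0,y_0) + g_{lg}(0)K_\pi(x_0,y_0).
\end{equation*}
The $\pi$-contribution satisfies the stated bound by combining the NIS kernel estimates for $K_\pi$ with $|g_{lg}(0)|\lesssim \sigma^{-(a-1/2)}\LtaN{a}{m}$, obtained from Cauchy--Schwarz on $\int_{|\eta|>\sigma/c}|\widehat{g}(\eta)|\,d\eta$ together with $g(0)=0$.

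For the remaining term $K_{g_{lg}(r\sqrt{\boxbt})}(x_0,y_0)$, I apply Lemma \ref{LemmaSikIneqs} to the three-term factorization
\begin{equation*}
g_{lg}(r\sqrt{\boxbt}) = \l(1+\l(r\vee\rp{x_0}{y_0}\r)^2\boxbt\r)^{-c_1}\cdot T_{mid}\cdot \l(1+\l(r\vee\rp{x_0}{y_0}\r)^2\boxbt\r)^{-c_2},
\end{equation*}
with $T_{mid}=g_{lg}(r\sqrt{\boxbt})\l(1+\l(r\vee\rp{x_0}{y_0}\r)^2\boxbt\r)^{c_1+c_2}$ and $4c_i > Q+2|\alpha_i|$ for $\alpha_1=\alpha$, $\alpha_2=\beta$. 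Proposition \ref{PropOnDiagPolyBound} at scale $\sqrt{t}=r\vee\rp{x_0}{y_0}$ controls the outer factors by $\l(r\vee\rp{x_0}{y_0}\r)^{-|\alpha|-|\beta|} V(x_0,\rp{x_0}{y_0}+r)^{-1}$, after identifying $V(x_0,r\vee\rp{x_0}{y_0})\approx V(y_0,r\vee\rp{x_0}{y_0})\approx V(x_0,\rp{x_0}{y_0}+r)$ via Proposition \ref{PropHomogType}. The operator norm of $T_{mid}$ is $\sup_\mu|g_{lg}(\mu)(1+\sigma^2\mu^2)^{c_1+c_2}|$; on $\supp g$ this is $\lesssim \sigma^{2(c_1+c_2)}\|g_{lg}\|_{L^\infty(\supp g)}\lesssim \sigma^{2(c_1+c_2)-(a-1/2-\epsilon)}\LtaN{a}{m}$ using the Hölder estimate $\|g_{lg}\|_{L^\infty(\supp g)}\lesssim \sigma^{-(a-1/2-\epsilon)}\|g\|_{L^2_a}$ (from $L^2_a\hookrightarrow C^{a-1/2-\epsilon}$ and the vanishing zeroth moment of the mollifier), while off $\supp g$ the Schwartz decay of $\check\phi$ absorbs the polynomial weight. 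Optimizing $c_i$ at their respective minima delivers the stated exponent on $\sigma$ throughout the range $b>Q/2+|\alpha|\vee|\beta|$.

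The main obstacle is the bookkeeping of exponents: one must choose the scale $\sqrt{t}=r\vee\rp{x_0}{y_0}$ in the outer $(1+t\boxbt)^{-c_i}$ factors (to avoid an otherwise-unavoidable volume-doubling loss of $(1+\rp{x_0}{y_0}/r)^Q$) and then verify that the $\sigma^{2(c_1+c_2)}$ growth of the weight $(1+\sigma^2\mu^2)^{c_1+c_2}$ restricted to $\supp g$, together with the sharp Hölder decay of $g_{lg}$ on $\supp g$, collapses to $\sigma^{-(a-1/2-b)}$ precisely when $b$ matches $2(c_1+c_2)$. Crucially the restriction to $\supp g$ is what makes the weight a bounded power of $\sigma$ (rather than an unbounded function of $\mu$), and the Schwartz tail of $\check\phi$ off $\supp g$ handles the rest regardless of the size of $c_i$.
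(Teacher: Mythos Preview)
Your overall architecture---Fourier-side splitting of $g(\lambda)=m(\lambda^2)$, finite propagation to kill the low-frequency piece, and the $\boxbt$ plus $\pi$ decomposition---is exactly what the paper does. The gap is in your factorization of the main term $g_{lg}(r\sqrt{\boxbt})$, and it costs you the stated range of $b$.

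With your three-factor scheme the outer resolvent powers must each satisfy the hypothesis of Proposition~\ref{PropOnDiagPolyBound}, i.e.\ $4c_1>Q+2|\alpha|$ and $4c_2>Q+2|\beta|$. The weight you then have to absorb into $T_{mid}$ is $(1+\sigma^2\mu^2)^{c_1+c_2}$, so the net exponent on $\sigma$ is $2(c_1+c_2)-(a-\tfrac12)$; hence your ``$b$'' is $2(c_1+c_2)$, whose infimum over admissible $c_i$ is $Q+|\alpha|+|\beta|$, \emph{not} $\tfrac{Q}{2}+|\alpha|\vee|\beta|$. Your closing sentence asserting the full range is therefore incorrect, and since the bound strengthens as $b$ decreases, the missing values of $b$ do not follow from the ones you obtain. (Downstream this would weaken Theorem~\ref{ThmMultipliersLessSmooth} from $a>\tfrac{Q+1}{2}$ to $a>Q+\tfrac12$.)

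The paper avoids this loss by a square-root trick: it takes a measurable $J$ with $J^2=F_s$ (your $g_{lg}$), so that the decay and the weight are \emph{merged} into each factor. One then bounds $|J(\lambda)|(1+s^2\lambda)^{b/4}\lesssim \sigma^{-(a-1/2-b)/2}$ and applies Lemma~\ref{LemmaSikIneqs} with $S_1=S_2=J(\sqrt{\boxbt})$. Proposition~\ref{PropOnDiagPolyBound} is invoked on each side with the same parameter $b$, yielding the constraints $b>\tfrac{Q}{2}+|\alpha|$ and $b>\tfrac{Q}{2}+|\beta|$ rather than a sum, which is precisely the stated range. The point is that by putting all of the multiplier into the middle and leaving pure resolvents outside, you pay the $Q/2$ entry fee of Proposition~\ref{PropOnDiagPolyBound} twice; the square root lets you pay it once.

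Two smaller remarks. First, your uniform choice $\sigma=1+\rho(x_0,y_0)/r$ is neat, but when $\rho(x_0,y_0)\lesssim r$ no choice of $c$ makes the support of $\widehat{g_{sm}}$ scaled by $\kappa r$ strictly smaller than $\rho(x_0,y_0)$; in that regime one simply skips the splitting and bounds $g(r\sqrt{\boxbt})$ directly (your factorization with $g$ in place of $g_{lg}$ already gives this, since $\sigma\approx 1$ there). Second, your H\"older bound $\|g_{lg}\|_{L^\infty}\lesssim\sigma^{-(a-1/2-\epsilon)}$ is weaker than necessary: Cauchy--Schwarz on $\int_{|\eta|\gtrsim\sigma}|\widehat g(\eta)|\,d\eta$ gives $\sigma^{-(a-1/2)}$ uniformly in $\mu$, with no $\epsilon$ loss.
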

\begin{proof}
Fix $x_0,y_0\in M$.  We wish to bound:
\begin{equation*}
D_x^\alpha D_y^\beta K_{m\l(r^2\boxb\r)}\l(x,y\r)\bigg|_{\substack{x=x_0\\ y=y_0}}
\end{equation*}
We begin with the harder case $\rp{x_0}{y_0}\geq r$.  Define $\psi\l( \lambda\r) = m\l( \lambda^2\r)$, so that $\LtaN{a}{\psi}\lesssim \LtaN{a}{m}$ (due
to the support of $m$).  Let $\psi_r\l(\lambda\r) = \psi\l( r\lambda\r)$,
so that $\psi_r\l( \boxb\r) = m\l( r^2\boxb\r)$, and $\widehat{\psi_r}\l( \xi\r) = \frac{1}{r}\widehat{\psi}\l( \frac{\xi}{r}\r)$.  Let $\phi\in C^\infty\l( \R\r)$ be such that:
\begin{equation*}
\phi\l( \xi \r) =
\begin{cases}
0 & \text{ if $\l|\xi\r|\leq \frac{1}{4}$}\\
1 & \text{ if $\l|\xi\r|\geq \frac{1}{2}$}
\end{cases}
\end{equation*}
For $s>0$, define:
\begin{equation*}
\widehat{F}_s\l( \xi\r) = \phi\l( \frac{\xi}{s}\r) \frac{1}{r}\widehat{\psi}\l( \frac{\xi}{r}\r)
\end{equation*}
where we have suppressed the dependence of $F$ on $r$.  We will use the following elementary fact:  for all $0\leq \bt\leq a-\frac{1}{2}$,
\begin{equation}\label{EqnMultiplierBoundF}
\sup_{s>0,r>0,\lambda>0} \l(1+\lambda s \r)^{\bt} \l(1+\frac{s}{r}\r)^{a-\frac{1}{2}-\bt}\l| F_s\l(\lambda\r) \r|\lesssim 1
\end{equation}
Here, the implicit constant depends on the same parameters
as in the statement of the proposition.  We leave the proof of (\ref{EqnMultiplierBoundF}) to the interested reader.

Set $s = \frac{\rp{x_0}{y_0}}{\kappa}$, so that $\frac{s}{r}\gtrsim 1$.
Note that, by the definition of $\widehat{F}_s$, Theorem \ref{ThmFiniteProp},
and Corollary \ref{CorFourierTransform0}, we have:
\begin{equation}\label{EqnMutliplerTwoParts}
\begin{split}
&D_x^\alpha D_y^\beta K_{m\l(r^2\boxb\r)}\l( x,y\r)\bigg|_{\substack{x=x_0\\y=y_0}} = D_x^\alpha D_y^\beta K_{F_s\l(\sqrt{\boxb}\r)}\l(x,y\r)\bigg|_{\substack{x=x_0\\y=y_0}}
\\ & \qquad =D_x^{\alpha}D_y^\beta K_{F_s\l(\sqrt{\boxbt}\r)}\l(x,y\r)\bigg|_{\substack{x=x_0\\y=y_0}} + F_s\l( 0\r) D_x^{\alpha}D_y^\beta K_\pi\l(x,y\r)\bigg|_{\substack{x=x_0\\y=y_0}}
\end{split}
\end{equation}
We bound the two terms in the last line of (\ref{EqnMutliplerTwoParts}) separately.
For the second term, we apply (\ref{EqnMultiplierBoundF}) (with $\bt=0$) and the fact
that $\pi$ is an NIS operator of order $0$ to see:
\begin{equation*}
\l|F_s\l( 0\r) D_x^\alpha D_y^\beta K_\pi\l(x,y\r)\bigg|_{\substack{x=x_0\\y=y_0}} \r| \lesssim \l( 1+\frac{\rp{x_0}{y_0}}{r}\r)^{-a+\frac{1}{2}} \frac{\rp{x_0}{y_0}^{-\l|\alpha\r|-\l|\beta\r|}}{V\l(x,\rp{x_0}{y_0}\r)}
\end{equation*}
which, in light of the fact that $\rp{x_0}{y_0}\geq r$, is at least as good as
the bound in the statement of the proposition.
We now turn to the first term in the last line of (\ref{EqnMutliplerTwoParts}).
We let $J_s\l(\lambda\r)$ be a measurable function such that $J_s\l(\lambda\r)^2=F_s\l(\lambda\r)$ (we again suppress the dependence on $r$, and note that
all of our bounds will be uniform in $r$); we now have, from (\ref{EqnMultiplierBoundF}), with $\bt=b$:
\begin{equation*}
\sup_{s>0,r>0,\lambda>0} \l(1+\lambda s\r)^{\frac{b}{2}} \l(1+\frac{s}{r}\r)^{\frac{a}{2}-\frac{1}{4}-\frac{b}{2}} \l| J_s\l(\lambda\r)\r| \lesssim 1
\end{equation*}
Proceeding just as in the proof
of off-diagonal estimates in Theorem \ref{ThmMainBounds}, we see:
\begin{equation*}
\begin{split}
\LpN{2}{D_x^\alpha K_{J_s\l(\sqrt{\boxbt}\r)}\l(x,\cdot\r)}{M} &\lesssim \l(1+\frac{s}{r}\r)^{-\frac{a}{2}+\frac{1}{4}+\frac{b}{2}} \LpN{2}{D_x^{\alpha}K_{\l(1+s^2\boxbt\r)^{-\frac{b}{2}}}\l(x,\cdot\r)}{M} \\
&\lesssim \l(1+\frac{\rp{x_0}{y_0}}{r} \r)^{-\frac{a}{2}+\frac{1}{4}+\frac{b}{2}} \frac{\rp{x_0}{y_0}^{-\l|\alpha\r|}}{V\l(x, \rp{x_0}{y_0}\r)}
\end{split}
\end{equation*}
where in the last step, we have applied Proposition \ref{PropOnDiagPolyBound},
using our assumption on $b$.  We have a similar inequality for
$$\LpN{2}{D_y^\beta K_{J_s\l(\sqrt{\boxbt}\r)}\l(\cdot,y\r)}{M}$$
Thus, we have:
\begin{equation*}
\begin{split}
&\l| D_x^{\alpha} D_y^\beta K_{F_s\l(\sqrt{\boxbt} \r)}\l(x,y\r)\bigg|_{\substack{x=x_0\\y=y_0}}\r| \\
&\quad \lesssim \LpN{2}{D_x^{\alpha} K_{J_s\l(\sqrt{\boxbt}\r)}\l(x,\cdot\r)\bigg|_{x=x_0}}{M}
\LpN{2}{D_y^{\beta} K_{J_s\l(\sqrt{\boxbt}\r)}\l(\cdot,y\r)\bigg|_{y=y_0}}{M}\\
&\quad \lesssim \frac{\rp{x_0}{y_0}^{-\l|\alpha\r|-\l|\beta\r|}}{V\l(x_0,\rp{x_0}{y_0}\r)^{\frac{1}{2}}V\l(y_0,\rp{y_0}{y_0}\r)^{\frac{1}{2}}} \l(1+\frac{\rp{x_0}{y_0}}{r}\r)^{-a+\frac{1}{2}+b}
\end{split}
\end{equation*}
Using that $V\l(y_0,\rp{x_0}{y_0}\r)\approx V\l( x_0, \rp{x_0}{y_0}\r)\approx V\l( x_0, r+\rp{x_0}{y_0}\r)$
completes the proof of the bound in the case $\rp{x_0}{y_0}\geq r$.

We now turn to the case when $\rp{x_0}{y_0}\leq r$.  This will
follow in much the same manner as the on-diagonal bounds in Section
\ref{SectionOnDiag}, and we merely sketch the proof.  Let $j\l(\lambda\r)$
be a measurable function such that $j\l(\lambda\r)^2 = m\l( r^2\lambda\r)$.
Note, that by the compact support of $m$, we have for every $N$,
\begin{equation*}
\sup_{\lambda>0} \l(1+r^2\lambda\r)^N \l|j\l(\lambda\r)\r|\lesssim 1
\end{equation*}
with constants independent of $r$.  Here we have used that (by the
Sobolev embedding theorem) $m$ (and therefore $j$) is bounded.
This bound is the point where Remark \ref{RmkUseContVersion}
is essential.
Note also that $m\l(0\r)=0=j\l(0\r)$, and therefore $j\l(\boxb\r)=j\l(\boxbt\r)$.
Thus, we have:
\begin{equation*}
\begin{split}
&\l| D_x^\alpha D_y^\beta K_{m\l(r^2\boxb \r)}\l(x,y\r)\bigg|_{\substack{x=x_0\\y=y_0}}\r|\\
&\quad \leq \LpN{2}{D_x^\alpha K_{j\l(\boxbt\r)}\l(x,\cdot\r)\bigg|_{x=x_0}}{M} \LpN{2}{D_y^\beta K_{j\l(\boxbt\r)}\l(\cdot,y\r)\bigg|_{y=y_0}}{M} \\
&\quad \lesssim \LpN{2}{D_x^\alpha K_{\l(1+r^2\boxbt\r)^{-N}}\l(x,\cdot\r)\bigg|_{x=x_0}}{M}
\LpN{2}{D_y^\beta K_{\l(1+r^2\boxbt\r)^{-N}}\l(\cdot,y\r)\bigg|_{y=y_0}}{M}\\
&\quad\lesssim \frac{r^{-\l|\alpha\r|-\l|\beta\r|}}{V\l(x_0,r\r)^{\frac{1}{2}}V\l(y_0,r\r)^{\frac{1}{2}}} \\
&\quad\lesssim \frac{r^{-\l|\alpha\r|-\l|\beta\r|}}{V\l(x_0,r\r)}
\end{split}
\end{equation*}
Where in the last line, we have used that $V\l(y_0,r\r)\approx V\l(x_0,r\r)$,
since $r\geq \rp{x_0}{y_0}$.  This completes the proof of the proposition, since
$$V\l( x_0,r\r) \approx V\l( x_0, \rp{x_0}{y_0}+r\r)$$
\end{proof}

Proposition \ref{PropMultCptSupp} motivates the following definition:
\begin{defn}\label{DefnElemOps}
Let $r>0$, and $K\in C^{\infty}\l( M\times M\r)$.  We say
$K$ is a pre-$r$-elementary kernel if, for all $N>0$:
\begin{equation*}
\l| D_x^{\alpha} D_y^{\beta} K\l(x,y\r) \r| \leq C_{N,\alpha,\beta}
\l(1+ \frac{\rp{x}{y}}{r}\r)^{-N} \frac{r^{-\l|\alpha\r|-\l|\beta\r|}}{V\l( x, \rp{x}{y} + r\r)}
\end{equation*}
If $S\subset C^\infty\l(M\times M\r)\times \l(0,\infty\r)$ is a set
of pairs $\l( K,r\r)$ where $K$ is a pre-$r$-elementary kernel, we say
the $K$s are uniformly pre-$r$-elementary kernels if the constants
$C_{N,\alpha,\beta}$ can be chosen independently of $\l( K,r\r)\in S$.
\end{defn}

\begin{rmk}
The ``pre'' in the definition above is put there so as to not
conflict with the similar definition in \cite{StreetAnAlgebra}.
\end{rmk}

Next we prove an analog of Lemma 6.36 of
\cite{FollandSteinHardySpacesOnHomogeneousGroups}:
\begin{prop}\label{PropScalingElem}
Suppose $m\in \mathcal{S}\l( \l[0,\infty\r)\r)$, $m\l(0\r)=0$, and $r>0$, then $K_{m\l(r^2\boxb\r)}$
is a pre-$r$-elementary kernel.  Moreover, as $r$ ranges over $\l(0,\infty\r)$
and $m$ ranges over a bounded subset of $\mathcal{S}\l( \l[0,\infty\r)\r)$,
the $K_{m\l(r^2\boxb\r)}$ are uniformly pre-$r$-elementary kernels.
\end{prop}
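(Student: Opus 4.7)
The strategy is a spectral Littlewood--Paley decomposition that reduces everything to Proposition \ref{PropMultCptSupp}. Fix $\eta\in C_0^\infty((1/4,4))$ with $\sum_{k\in\Z}\eta(4^{-k}\lambda)\equiv 1$ on $(0,\infty)$, and decompose $m=\sum_{k\in\Z} m_k$, where $m_k(\lambda):=m(\lambda)\,\eta(4^{-k}\lambda)$. Since $m(0)=0$ and $m_k(0)=0$, the spectral identity $\sum_k m_k(r^2\boxb)=m(r^2\boxb)$ holds, and this will be upgraded to a kernel-level sum once the individual terms are controlled uniformly.

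Rescale by setting $\tilde m_k(\lambda):=m_k(4^k\lambda)$, which is supported in $[1/4,4]$ and satisfies $m_k(r^2\boxb)=\tilde m_k(r_k^2\boxb)$ with $r_k:=2^{-k}r$. The first main input is a Sobolev-norm estimate for $\tilde m_k$: for every $a\geq 0$ and every $N\geq 0$,
\[
\LtaN{a}{\tilde m_k}\lesssim \min\bigl(4^{k},\,4^{-kN}\bigr),
\]
uniformly as $m$ ranges over a bounded subset of $\mathcal{S}([0,\infty))$. The bound $4^{-kN}$ as $k\to+\infty$ follows from the rapid decay of $m$ and all its derivatives at infinity, while the bound $4^{k}$ as $k\to-\infty$ is precisely where the hypothesis $m(0)=0$ enters: smoothness of $m$ together with $m(0)=0$ forces $m(\lambda)=O(\lambda)$ at the origin with bounded higher derivatives.

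Next, apply Proposition \ref{PropMultCptSupp} to each $\tilde m_k(r_k^2\boxb)$. Given $\alpha,\beta$ and a target decay exponent $N_0$ for the pre-$r$-elementary bound, choose the parameters $a$ and $b$ satisfying the constraints of Proposition \ref{PropMultCptSupp} with $M:=a-\tfrac12-b$ as large as we like. This yields
\[
\bigl|D_x^\alpha D_y^\beta K_{\tilde m_k(r_k^2\boxb)}(x,y)\bigr|\lesssim \min(4^k,4^{-kN})\Bigl(1+\frac{\rp{x}{y}}{r_k}\Bigr)^{-M}\frac{(r_k\vee\rp{x}{y})^{-|\alpha|-|\beta|}}{V(x,\rp{x}{y}+r_k)}.
\]

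The final step is to sum these estimates over $k\in\Z$ and match the pre-$r$-elementary template. Split into $k\geq 0$ (where $r_k\leq r$) and $k<0$ (where $r_k>r$). In each range, use the doubling property (Proposition \ref{PropHomogType}) to convert $V(x,\rp{x}{y}+r_k)^{-1}$ into a polynomial factor times $V(x,\rp{x}{y}+r)^{-1}$; use $r_k^{-|\alpha|-|\beta|}=2^{k(|\alpha|+|\beta|)}r^{-|\alpha|-|\beta|}$; and absorb $(1+\rp{x}{y}/r_k)^{-M}$ against $(1+\rp{x}{y}/r)^{-N_0}$. The case $k\geq 0$ is direct since $(1+2^k\rp{x}{y}/r)^{-M}\leq (1+\rp{x}{y}/r)^{-M}$; for $k<0$ one splits the sum at $k\sim -\log_2(\rp{x}{y}/r)$ and uses the spatial decay factor only in the regime where $2^k\rp{x}{y}/r\gtrsim 1$. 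The resulting geometric series converges because the twin factor $\min(4^k,4^{-kN})$ dominates the polynomial growths $2^{kQ}$, $2^{k(|\alpha|+|\beta|)}$, $2^{kM}$ that the doubling and scaling arguments introduce. The main obstacle is the bookkeeping in this last step: one must verify that a single choice of $N$ and $M$ handles all $k$ simultaneously and that the constants depend only on finitely many Schwartz seminorms of $m$, which then delivers the claimed uniformity over bounded subsets of $\mathcal{S}([0,\infty))$.
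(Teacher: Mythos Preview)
Your dyadic decomposition is more elaborate than what the paper does. For $m\in C_0^\infty\bigl(\bigl(\tfrac14,4\bigr)\bigr)$ (the only case actually used later), the paper simply applies Proposition~\ref{PropMultCptSupp} directly with $a\gg b\gg 0$; no decomposition is needed. For general Schwartz $m$ the paper does \emph{not} decompose either: it indicates that one should rerun the wave-equation argument of Proposition~\ref{PropMultCptSupp}, now using that $\psi(\lambda)=m(\lambda^2)$ is Schwartz so that $\widehat\psi$ has rapid decay, which makes the exponent $a$ in \eqref{EqnMultiplierBoundF} arbitrarily large and hence gives arbitrary $(1+\rho/r)^{-N}$ directly.

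Your route, by contrast, has a real gap in the $k<0$ range. The hypothesis $m(0)=0$ gives only first-order vanishing, so your Sobolev bound $\LtaN{a}{\tilde m_k}\lesssim 4^{k}$ for $k<0$ is sharp and cannot be improved to $4^{kN}$. Now fix $\rho>r$, write $s=\rho/r$, and look at the terms with $k\le k_0:=-\log_2 s$. For these $k$ one has $r_k\ge\rho$, so $(1+\rho/r_k)^{-M}\approx 1$ and Proposition~\ref{PropMultCptSupp} yields essentially $|K_k|\lesssim 4^{k}\,V(x,r_k)^{-1}\,r_k^{-|\alpha|-|\beta|}$. Summing gives at best a factor $\sum_{k\le k_0}4^{k}\approx 4^{k_0}=s^{-2}$; the intermediate range $k_0<k<0$ contributes the same order regardless of how large $M$ is (the geometric series in $2^{k(2-M)}$ is dominated by $k\approx k_0$). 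So after summation you obtain only $(1+\rho/r)^{-2}$ decay (plus a bounded gain from the derivative and lower-doubling factors), not $(1+\rho/r)^{-N}$ for every $N$ as Definition~\ref{DefnElemOps} requires. The ``twin factor dominates the polynomial growths'' claim is therefore false on the $k<0$ side: the quantity to be dominated is not a power of $2^{k}$ but the target decay $(1+\rho/r)^{-N_0}$, and a single power $4^{k}$ cannot manufacture arbitrary $N_0$.

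To salvage the general case along the paper's lines, drop the spectral dyadic decomposition and instead bound $F_s(\lambda)$ directly from the rapid decay of $\widehat\psi$; this is where the full Schwartz regularity of $m$ (not merely $m(0)=0$) enters and produces arbitrary powers of $(1+s/r)^{-1}$.
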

\begin{proof}
For $m\in C_0^\infty\l(\l(\frac{1}{4},4\r)\r)$ (which is the only
case we shall use in this paper), the result follows
immediately from Proposition \ref{PropMultCptSupp} (by taking $a>>b>>0$).  For the general
case, a proof similar to the one in Proposition \ref{PropMultCptSupp} works,
merely by keeping track of the decay of $m$ at $\infty$.  Since
we do not use this, we leave the details to the reader.
\end{proof}

\begin{lemma}\label{LemmaSumElemOps}
Fix $N_0\in \Z$.  Suppose for each $j\in \Z$, $j\leq N_0$, $K_j$ is a pre-$2^j$-elementary kernel, uniformly
in $j$.  Then,
$K\l(x,y\r):=\sum_{j\leq N_0} K_j\l( x,y\r)$ converges in $C^{\infty}$ off of the
diagonal of $M\times M$.  Moreover, the function $K$ satisfies
the estimates of part 2 of Definition \ref{DefnNisOps}.
\end{lemma}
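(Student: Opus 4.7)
The plan is to fix $x \ne y$, set $\delta := \rp{x}{y} > 0$, and estimate the sum $\sum_{j \leq N_0} D_x^\alpha D_y^\beta K_j(x,y)$ by splitting it at the scale $2^j \approx \delta$. For the ``small-scale'' piece ($2^j \leq \delta$) I would exploit the decay $(1+\delta/2^j)^{-N}$: since $\delta + 2^j \approx \delta$ there, the uniform pre-$2^j$-elementary estimate gives $|D_x^\alpha D_y^\beta K_j(x,y)| \lesssim (2^j/\delta)^N\, 2^{-j(|\alpha|+|\beta|)}/V(x,\delta)$, and choosing $N > |\alpha|+|\beta|$ turns this into a convergent geometric series in $2^j$ dominated by its largest term, yielding $\lesssim \delta^{-|\alpha|-|\beta|}/V(x,\delta)$.

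The ``large-scale'' piece ($2^j > \delta$, $j \leq N_0$) is the genuine difficulty, because there $\delta + 2^j \approx 2^j$ and the bound becomes $2^{-j(|\alpha|+|\beta|)}/V(x,2^j)$, which in the worst case $|\alpha|+|\beta|=0$ gives a sum $\sum_{\delta < 2^j \leq 2^{N_0}} 1/V(x,2^j)$ that does not converge by gross estimates. The key device is the reverse doubling half of Proposition \ref{PropHomogType}: for $\delta \leq 2^j \leq \delta_0$ one has $V(x,\delta) \lesssim (\delta/2^j)^q V(x,2^j)$, so $1/V(x,2^j) \lesssim (\delta/2^j)^q / V(x,\delta)$, and the summand is bounded by $(\delta/2^j)^q\, 2^{-j(|\alpha|+|\beta|)}/V(x,\delta)$. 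Because $q > 0$, this is a convergent geometric sum in $2^{-j}$ whose dominant term (at $2^j \approx \delta$) gives the claimed $\delta^{-|\alpha|-|\beta|}/V(x,\delta)$. The remaining range $2^j > \delta_0$ (which appears only when $\delta_0 < 2^{N_0}$) contains only finitely many $j$, each contributing $O(1)$ since $V(x,2^j) \gtrsim V(x,\delta_0) \approx 1$; this constant is absorbed into $\delta^{-|\alpha|-|\beta|}/V(x,\delta)$, which is itself bounded below in that regime.

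Combining the two pieces yields exactly the estimate required in part 2 of Definition \ref{DefnNisOps}, uniformly in $N_0$ (since the tails are controlled independently of $N_0$). For the $C^\infty$-convergence claim, the same estimates applied to $D_x^\alpha D_y^\beta K_j$ for arbitrary multi-indices show that the partial sums form a Cauchy sequence in $C^\infty$ on every compact subset of $(M \times M) \setminus \mathrm{diag}$, so the sum converges there in $C^\infty$ and term-by-term differentiation is justified. The main obstacle, as noted, is the logarithmically divergent summation at the largest available scales, which is only defeated by invoking the lower Ahlfors regularity exponent $q$ from Proposition \ref{PropHomogType}; once that is applied, the rest of the proof is a routine geometric-series bookkeeping.
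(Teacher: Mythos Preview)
Your argument is correct and matches the paper's proof essentially line for line: the same three-range decomposition ($2^j \leq \delta$, $\delta \leq 2^j \leq \delta_0$, $\delta_0 \leq 2^j \leq 2^{N_0}$), the same use of the reverse-doubling exponent $q$ from Proposition~\ref{PropHomogType} in the middle range, and the same disposal of the top range as a finite sum of smooth functions. One small slip: the estimates are uniform in the \emph{lower} partial-sum cutoff (what the paper calls $N_1$), not in $N_0$---the constant in the third range genuinely depends on $N_0$---but since $N_0$ is fixed in the statement this does not affect the argument.
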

\begin{proof}
Fix $x,y\in M\times M$, $x\ne y$.
We will show that the sum:
\begin{equation*}
\sum_{-N_1\leq j \leq N_0} \l|D_x^{\alpha} D_y^{\beta} K_j\l( x,y\r)\r|
\end{equation*}
satisfies the desired estimates uniformly in $N_1$.  The result will
then follow immediately.

Consider, suppressing the $-N_1\leq j\leq N_0$, and writing $a=\l|\alpha\r|+\l|\beta\r|$, and $\delta=\rp{x}{y}$,
\begin{equation}\label{EqnSumElemOps1}
\begin{split}
\sum_j \l|D_x^{\alpha} D_y^{\beta} K_j\l( x,y\r)\r| \lesssim \sum_j \l( 1+ \frac{\delta}{2^j} \r)^{-N} \frac{2^{-ja}}{V\l( x, \delta+2^j\r)}
\end{split}
\end{equation}
We separate (\ref{EqnSumElemOps1}) into three sums.  For the first we take $N=0$ and recall the numbers $\delta_0$ and $q$ from Proposition \ref{PropHomogType}:
\begin{equation*}
\begin{split}
\sum_{\delta_0\geq 2^j\geq \delta} \frac{2^{-ja}}{V\l( x, \delta+2^j\r)} &\lesssim \sum_{\delta_0\geq 2^j\geq \delta}\frac{2^{-ja}}{V\l(x,2^j\r)}\\
&\lesssim \sum_{\delta_0\geq 2^j \geq \delta} \l(\frac{\delta}{2^j}\r)^q\frac{2^{-ja}}{V\l(x, \delta\r)}\\
&\lesssim \frac{\delta^{-a}}{V\l(x,\delta\r)}
\end{split}
\end{equation*}
which is the desired bound.  Turning to the second sum:
\begin{equation*}
\begin{split}
\sum_{2^j\leq \delta} \l(1+\frac{\delta}{2^j}\r)^{-N}\frac{2^{-ja}}{V\l( x, \delta+2^j\r)} &\lesssim \sum_{2^j\leq \delta} \l(\frac{2^j}{\delta}\r)^N \frac{2^{-ja}}{V\l(x,\delta\r)}\\
&\lesssim \frac{\delta^{-a}}{V\l(x,\delta\r)}
\end{split}
\end{equation*}
for $N$ sufficiently large.

Finally, the term
$$\sum_{\delta_0\leq 2^j\leq 2^{N_0}} K_j\l( x, y\r)$$
is just a finite sum of $C^\infty$ functions and so satisfies the desired
bounds trivially.
\end{proof}

\begin{proof}[Proof of Theorem \ref{ThmMultipliers}]
Since $m\l( \boxb\r) = m\l( \boxbt\r) + m\l(0\r) \pi$, and
$\pi$ is an NIS operator of order $0$, we need only verify
that $m\l( \boxbt\r)$ is an NIS operator of order $0$.
Property 3 follows just as in the case of $e^{-t\boxbt}$.
The main point is property 2.  Let $\phi\in C_0^\infty\l(\R\r)$
be such that
\begin{equation*}
\begin{split}
\phi\l( x\r) =
\begin{cases}
0 & \text{ if $\l|x\r|\geq 2$,}\\
1 & \text{ if $\l|x\r|\leq 1$}
\end{cases}
\end{split}
\end{equation*}
let $\psi = \phi\l(x\r) - \phi\l( 2x\r)$, and let $m_j\l(\lambda\r) = \psi\l( 2^j \lambda\r) m\l( \lambda\r)$.
Thus, we have that, for $\lambda\ne 0$, $m\l( \lambda\r) = \sum_{j\in \Z} m_j\l( \lambda\r)$, 
and consequently, $\sum_{j\in \Z} m_j\l( \boxb\r) = m\l( \boxbt\r)$, 
with convergence in the strong operator topology.  Note, that since $0$ is an
 isolated point of the spectrum
of $\boxb$ (this follows from the easily provable fact that $\boxbt^{-1}$ is compact), we have that there exists an $N_0$ such that $m_j\l( \boxb\r) =0$
for $j> N_0$.
By Proposition \ref{PropScalingElem}, we have that $m_j\l( \boxb\r)$
is a pre-$2^j$-elementary operator, uniformly in $j$.
Lemma \ref{LemmaSumElemOps} then shows that
$$m\l(\boxbt\r) = \sum_{j\leq N_0} m_j\l( \boxb\r)$$
satisfies property 2 of the definition of NIS operators.  Property 1
follows from Remark \ref{RmkNisDefnsEquiv} and the fact that
$$\sum_{-N\leq j\leq N_0} K_{m_j\l( \boxb\r)}\in C^\infty\l( M\times M\r)$$
for every $N$.  Finally, property 4 follows immediately from what
we have already done.
\end{proof}

We now turn to constructing a Littlewood-Paley square function
which will help us prove Theorem \ref{ThmMultipliersLessSmooth}.
Define $\phi,\psi\in C_0^\infty\l(\R\r)$ as in the proof of Theorem \ref{ThmMultipliers}.
That is $\phi\l(x\r)=1$ if $\l|x\r|\leq 1$, and $\phi\l( x\r)=0$ if $\l|x\r|\geq 2$, 
and $\psi\l( x\r) = \phi\l( x\r) -\phi\l( 2x\r)$; furthermore, we assume
that $\psi$ is real and even.  
Let $\psi_j\l( \lambda\r) = \psi\l( 2^j \lambda\r)$, so that
$\sum_j \psi_j\l(\lambda\r) =1$ for $\lambda\ne 0$.  Define:
$$\psit\l(\lambda\r) = \frac{\psi\l(\lambda\r)}{\sum_{j\in \Z} \l| \psi_j\l( \lambda\r) \r|^2}$$
Thus, if $\psit_j\l(\lambda\r) = \psit\l( 2^j \lambda\r)$, we have:
\begin{equation}\label{EqnSumPsij}
\sum_{j} \psit_j\l( \boxb\r) \psi_j\l(\boxb\r) =1-\pi
\end{equation}

Hence, for $f\in C^\infty\l( M\r)$, if we define
\begin{equation*}
%\begin{split}
\Lambda\l( f\r) = \l(\sum_j \l| \psi_j\l(\boxb\r)f \r|^2\r)^{\frac{1}{2}},\qquad
\Lambdat\l( f\r) = \l(\sum_j \l| \psit_j\l(\boxb\r)f \r|^2\r)^{\frac{1}{2}}
%\end{split}
\end{equation*}
We have, for all $1<p<\infty$:
\begin{equation}\label{EqnSquareLpBound}
\LpN{p}{\Lambda\l(f\r)}{M}\lesssim \LpN{p}{f}{M}, \qquad \LpN{p}{\Lambdat\l(f\r)}{M}\lesssim \LpN{p}{f}{M}
\end{equation}
(\ref{EqnSquareLpBound}) follows from standard arguments.  Indeed, for
any sequence $\epsilon_j$ ($j\in \Z$) of $-1$s and $1$s, we have that:
$$\sum_{j} \epsilon_j \psi_j\l( \boxb\r)$$
is bounded on $L^p$, since it is equal to an NIS operator of order $0$,
just as in the proof of Theorem \ref{ThmMultipliers}.  The result now
follows from the standard trick of taking the $\epsilon_j$s to be
iid random variables of mean $0$ taking values of $\pm 1$.  See
Chapter 4, Section 5 of \cite{SteinSingularIntegralsAndDifferentiablilityPropertiesOfFunctions} and page 267 of \cite{SteinHarmonicAnalysis}.

It now follows, again from standard arguments (\cite{SteinSingularIntegralsAndDifferentiablilityPropertiesOfFunctions,SteinHarmonicAnalysis}), that for $1<p<\infty$:
\begin{equation}\label{EqnSquareLpApprox}
\LpN{p}{\Lambda\l(f\r)}{M} + \LpN{p}{\pi f}{M} \approx \LpN{p}{f}{M}
\end{equation}
for $\lambda\ne 0$.  To see this, it suffices to see that for $f$
such that $\pi f=0$,
\begin{equation*}
\LpN{p}{\Lambda\l(f\r)}{M}\approx \LpN{p}{f}{M}
\end{equation*}
and this follows just as in Chapter 4, Section 5.3.1 of \cite{SteinSingularIntegralsAndDifferentiablilityPropertiesOfFunctions}
by using (\ref{EqnSumPsij}).

Define the maximal function:
\begin{equation*}
\sM\l( f\r)\l( x\r) = \sup_{\delta>0} \frac{1}{V\l(x,\delta\r)} \int_{B\l(x,\delta\r)} \l|f\l(y\r)\r| dy
\end{equation*}
We have:
\begin{lemma}\label{LemmaBoundElemByMax}
Suppose $a>\frac{Q+1}{2}$, $m$ is supported in $\l[ \frac{1}{4},4 \r]$, $r>0$
is fixed, and $\LtaN{a}{m} <\infty$.  Then, there exists a $C=C\l( a,\LtaN{a}{m}\r)$, but not depending on $r$, such that:
\begin{equation*}
\l| m\l(r^2 \boxb\r) f\l( x\r)\r| \leq C \sM\l( f\r)\l(x\r)
\end{equation*}
\end{lemma}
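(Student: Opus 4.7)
The plan is to extract a pointwise bound on $K_{m\l(r^2\boxb\r)}$ from Proposition \ref{PropMultCptSupp}, and then dominate $m\l(r^2\boxb\r)f\l(x\r)$ by $\sM\l(f\r)\l(x\r)$ using a standard annular decomposition against the kernel. Since $a > \frac{Q+1}{2}$ gives $a - \frac{1}{2} > \frac{Q}{2}$, I can choose $b$ with $\frac{Q}{2} < b < a - \frac{1}{2}$, so that $N := a - \frac{1}{2} - b > 0$. Applying Proposition \ref{PropMultCptSupp} with $\alpha = \beta = 0$ then yields
\begin{equation*}
\l| K_{m\l(r^2\boxb\r)}\l(x,y\r) \r| \lesssim \l( 1 + \frac{\rp{x}{y}}{r} \r)^{-N} \frac{1}{V\l( x, \rp{x}{y} + r \r)},
\end{equation*}
with an implicit constant depending only on $a$ and $\LtaN{a}{m}$, and in particular independent of $r$.

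Writing $m\l(r^2\boxb\r)f\l(x\r) = \int_M K_{m\l(r^2\boxb\r)}\l(x,y\r) f\l(y\r)\, dy$, I split $M$ into $B\l(x,r\r)$ and the annular shells $A_k := B\l(x, 2^{k+1}r\r) \setminus B\l(x, 2^k r\r)$ for $k \geq 0$. On $B\l(x,r\r)$ the kernel bound gives $\l|K_{m\l(r^2\boxb\r)}\l(x,y\r)\r| \lesssim V\l(x,r\r)^{-1}$, so that piece contributes at most a constant multiple of $\sM\l(f\r)\l(x\r)$. On $A_k$ we have $\rp{x}{y} \approx 2^k r$, so $1 + \rp{x}{y}/r \approx 2^k$ and $V\l(x, \rp{x}{y} + r\r) \approx V\l(x, 2^k r\r)$, and therefore
\begin{equation*}
\int_{A_k} \l|K_{m\l(r^2\boxb\r)}\l(x,y\r)\r|\, \l|f\l(y\r)\r|\, dy \lesssim \frac{2^{-kN}}{V\l(x, 2^k r\r)} \int_{B\l(x, 2^{k+1}r\r)} \l|f\l(y\r)\r|\, dy \lesssim 2^{-kN}\, \sM\l(f\r)\l(x\r),
\end{equation*}
where the last inequality uses the doubling estimate $V\l(x, 2^{k+1}r\r) \lesssim V\l(x, 2^k r\r)$ from Proposition \ref{PropHomogType}. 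Summing over $k \geq 0$ yields a convergent geometric series since $N > 0$, giving the claimed pointwise bound.

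The only genuine obstacle is verifying that Proposition \ref{PropMultCptSupp} can be invoked with a strictly negative exponent on the $\l(1 + \rp{x}{y}/r\r)$ factor; this is precisely what the hypothesis $a > \frac{Q+1}{2}$ affords, providing the slack to pick $b$ strictly between $\frac{Q}{2}$ and $a - \frac{1}{2}$. Everything else is a straightforward annular decomposition, and in particular no further smoothness of $m$ beyond $\LtaN{a}{m} < \infty$ is required.
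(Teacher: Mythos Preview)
Your proof is correct and follows essentially the same approach as the paper: both fix $b$ with $\frac{Q}{2}<b<a-\frac{1}{2}$, invoke Proposition \ref{PropMultCptSupp} with $\alpha=\beta=0$ to obtain the kernel bound $\l(1+\rp{x}{y}/r\r)^{-\epsilon}V\l(x,\rp{x}{y}+r\r)^{-1}$ with $\epsilon=a-\frac{1}{2}-b>0$, and then dominate the integral by a dyadic sum of maximal function terms. The only cosmetic difference is that the paper sums over dyadic balls $\{2^j\geq r\}$ rather than your explicit annuli $A_k$, but the two decompositions are equivalent.
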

\begin{proof}
Fix $b$ such that $\frac{Q}{2}<b<a-\frac{1}{2}$, and define $\epsilon=a-\frac{1}{2}-b>0$.
Applying Proposition \ref{PropMultCptSupp}, we have that:
\begin{equation*}
\l|K_{m\l(r^2\boxb\r)}\l(x,y\r)\r|\lesssim \l(1+\frac{\rp{x}{y}}{r}\r)^{-\epsilon} \frac{1}{V\l(x,r+\rp{x}{y}\r)}
\end{equation*}
Hence,
\begin{equation*}
\begin{split}
\l|m\l(r^2 \boxb\r)f \l(x\r)\r| &\lesssim \sum_{2^j\geq r} \int_{\rp{x}{y}\leq 2^j} \l(\frac{2^j}{r}\r)^{-\epsilon} \frac{1}{V\l(x,2^j\r)} \l|f\l( y\r)\r|\: dy \\
&\lesssim \sum_{2^j\geq r} \l(\frac{2^j}{r}\r)^{-\epsilon}\sM\l( f\r)\l( x\r)\\
&\lesssim \sM\l(f\r)\l(x\r)
\end{split}
\end{equation*}
\end{proof}

\begin{proof}[Proof of Theorem \ref{ThmMultipliersLessSmooth}]
Take $m$ as in the statement of Theorem \ref{ThmMultipliersLessSmooth}.
We know that $m\l(0\r) \pi$ is bounded on $L^p$, and so it suffices
to show that $m\l( \boxbt\r)$ is bounded on $L^p$.  
The proof will follow from a standard Littlewood-Paley
decomposition, which we sketch.  Fix $f\in C^\infty\l(M\r)$,
and define $F_j=\psi_j\l( \boxb\r) m\l( \boxbt\r) f$.  Note that,
since $\psi_j\l( \boxb\r) \psit_k\l( \boxb\r) =0$ unless $\l|j-k\r|\leq 1$,
we have (applying Lemma \ref{LemmaBoundElemByMax}):
\begin{equation*}
\begin{split}
\l| F_j\r| &= \l| \sum_{k=-1}^1 \psi_j\l(\boxb\r)m\l(\boxbt\r) \psit_{j+k}\l(\boxb\r)\psi_{j+k}\l(\boxb\r)f\r|\\
&\lesssim \sum_{k=-1}^1 \sM\l( \psi_{j+k}\l(\boxb\r) f\r)\\
&\lesssim \l(\sum_{k=-1}^1 \l(\sM\l(\psi_{j+k}\l(\boxb\r) f\r)^2\r)\r)^{\frac{1}{2}}
\end{split}
\end{equation*}
And thus, since $\pi m\l(\boxbt\r)=0$, we have:
\begin{equation*}
\begin{split}
\LpN{p}{m\l(\boxbt\r) f}{M} &\approx \LpN{p}{\l(\sum_j\l|F_j\r|^2\r)^{\frac{1}{2}}}{M}\\
&\lesssim \LpN{p}{\l(\sum_j \sum_{k=-1}^1 \sM\l(\psi_{j+k}\l(\boxb\r)f\r)^2\r)^{\frac{1}{2}}}{M}\\
&\lesssim \LpN{p}{\l(\sum_j \sM\l(\psi_{j}\l(\boxb\r)f\r)^2\r)^{\frac{1}{2}}}{M}\\
&\lesssim \LpN{p}{\Lambda\l(f\r)}{M}\\
&\lesssim \LpN{p}{f}{M}
\end{split}
\end{equation*}
where we have used the vector-valued inequality for $\sM$.  See
Chapter 2, Section 1 of \cite{SteinHarmonicAnalysis}.
\end{proof}

\section{Other Examples}\label{SectionOtherExamples}
	In this section we state some results with $\boxb$ replaced by
other operators.  In each case $\pi$ will denote the
orthogonal projection onto the $L^2$ kernel of the operator in
question.  All of the proofs of the below results are similar
to the proofs above, and we therefore confine ourselves to
brief comments on the necessary changes.

	\subsection{Example: A Generalization of Theorem 4 of \cite{SikoraRieszTransformGaussianBoundsAndTheMethodOfWaveEquation}}\label{ExampleSik}
		In this example, we discuss how the above methods may be applied
in the general situation of \cite{SikoraRieszTransformGaussianBoundsAndTheMethodOfWaveEquation}, except that we allow the infinitesimal generator
of the heat semi-group to have \it non-trivial \rm $L^2$ kernel.

We first quickly review the setup of that paper, though we refer
the reader there for more rigorous details.  Let $X$ be a 
metric measurable space with metric $\rho$, and let $\mu$ be a Borel measure on $X$.
We define $B\l( x, \delta\r)$ and $V\l( x,\delta\r)$ as above, but
with $\mu$ in place of Lebesgue measure.  We suppose that:
$$V\l( x, \gamma \delta\r) \lesssim \gamma^Q V\l( x, \delta\r)$$
for all $\gamma\geq 1$.

We suppose $TX$ is a continuous vector bundle with base $X$ (with
fibers $\C^d$), and scalar product $\l( \cdot, \cdot\r)_x$.  We
define the space $L^2\l( TX,\mu\r)$ of \it sections \rm of $TX$
in the usual way.  Now, suppose that $\sL$ is a, possibly unbounded, self-adjoint
positive semi-definite operator acting on $L^2\l( TX, \mu\r)$.
For bounded Borel measurable functions $m$, we define
$m\l( \sL\r)$ and $m\l( \sLt\r)$ analogous to the definitions
earlier in the paper for $\boxb$.  Let $\pi$ be the orthogonal projection
onto the $L^2$ kernel of $\sL$. 

Suppose that, for bounded continuous sections $\phi, \psi\in L^2\l( TX,\mu\r)$ with disjoint support, we have that:
$$\l< \psi, \pi\phi \r>=\int \l(\psi\l(x\r), K_\pi \l(x,y\r) \phi\l(y\r)\r)_x dy dx $$
for a measurable function $K_\pi$ defined on $X\times X$ without
the diagonal, and taking values in $Hom\l( T_yX, T_xX\r)$.
We suppose that, for $x\ne y$:
$$\l|K_\pi\l( x,y\r)\r|\lesssim \frac{1}{V\l(x,\rp{x}{y}\r)}$$
where $\l| \cdot\r|$ denotes the operator norm on $Hom\l( T_yX, T_xX\r)$.

We suppose that $e^{-t\sLt}$ satisfies the on-diagonal estimate:
\begin{equation*}
\LpN{2}{\l| K_{e^{-t\sLt}}\l(x,\cdot\r) \r|}{M} \lesssim V\l(x, \sqrt{t}\r)^{-\frac{1}{2}}
\end{equation*}
and that $\cos\l(t\sqrt{\sL}\r)$ has finite propagation
speed, in the sense used in \cite{SikoraRieszTransformGaussianBoundsAndTheMethodOfWaveEquation}.  Informally, that:
$$\supp\l(K_{\cos\l(t\sqrt{\sL}\r)}\r)\subseteq \l\{\l(x,y\r): \rp{x}{y}\leq t \r\}$$

Then, $e^{-t\sL}$ satisfies the off diagonal estimates, for $t<\rp{x}{y}^2$,
\begin{equation*}
\l| K_{e^{-t\sL}}\l(x,y\r) \r|\lesssim V\l( x, \rp{x}{y}\r)^{-1} \l(\frac{\rp{x}{y}^2}{t}\r)^{Q-\frac{1}{2}} e^{-\frac{\rp{x}{y}^2}{4t}}
\end{equation*}
The proof of this fact follows just by putting together the methods
of \cite{SikoraRieszTransformGaussianBoundsAndTheMethodOfWaveEquation}
and the methods of this paper.
\begin{rmk}
Actually, \cite{SikoraRieszTransformGaussianBoundsAndTheMethodOfWaveEquation}.
has a slightly better bound.  This is due to the fact that there
is a slight difficulty meshing the bounds for the heat kernel
with those for $\pi$.
\end{rmk}

	\subsection{Example: Pseudoconvex CR Manifolds of Finite Type}\label{ExamplePseudoConvex}
		In this example, we let $M$ be a compact pseudoconvex CR manifold
of dimension $2n-1$ ($n\geq 3$), and we assume that the range of $\dbarb$ (as an operator on $L^2\l(M\r)$) is closed, and we assume that $M$ is of
finite commutator type.
%We assume that condition $D\l(q\r)$ holds in a neighborhood
%$U'\subset M$ of finite commutator type, and we fix $U\subset \subset U'$.
%This is the setup of \cite{KoenigOnMaixmalSobolevAndHolderEstimates},
%and we refer the reader there for full details.  
Let $x_0\in M$ be a fixed base point, and let $U$ be a neighborhood of $x_0$.
We think of $U$ as small and may shrink it throughout the discussion.
Fix a local basis $L_1,\ldots,L_{n-1}$ for $T^{1,0}$ on $U$ (which
we may do by making $U$ small enough).  Fix a Hermitian metric
on $\C TM$ such that $L_1,\ldots, L_{n-1}$ are orthonormal.

Put $X_j= \Re\l(L_j\r), X_{j+n-1}=\Im\l( L_j\r)$, by assumption
the $X_k$s along with their commutators up to a certain fixed
order span the tangent space $TU$; we use these vector fields
to define a metric $\rho$ as in Section \ref{SectionGeomOfM}
and define $D^\alpha$ for an ordered multi-index $\alpha$ in terms
of these vector fields, as well.  In addition, we assume that
condition $D\l( q\r)$ holds on $U$.
This is the setup of \cite{KoenigOnMaixmalSobolevAndHolderEstimates},
and we refer the reader there for more details.

We use a definition from \cite{KoenigOnMaixmalSobolevAndHolderEstimates}:
\begin{defn}
An operator $T$ on functions $f\in C^\infty \l(M\r)$ is said to be an NIS
operator smoothing of order $r$ in $U$ if $T$ satisfies the properties
of Definition \ref{DefnNisOps} except for the following modifications:
\begin{itemize}
\item In property 2, we only consider $x,y\in U$.
\item In property 3, we only consider $x$ and $\delta$ such that
$B\l(x,\delta\r)\subset U$.
\end{itemize}
This definition extends to operators on forms in the obvious way;
see \cite{KoenigOnMaixmalSobolevAndHolderEstimates}, page 158.
\end{defn}

Consider the operator $\dbarb$ acting on $\l(0,q\r)$ forms.  We define
the operator $\sL = \dbarb^{*}\dbarb$ acting on $\l(0,q\r)$ forms
via the Hermitian product that we fixed above (here $\dbarb^{*}$ is
acting on $\l(0,q+1\r)$ forms).  Let $\pi$ be the projection onto
the $L^2$ kernel of $\sL$.  It is shown in \cite{KoenigOnMaixmalSobolevAndHolderEstimates} that $\pi$ is an NIS operator of order $0$ in $U$
and that the relative fundamental solution $\sLt^{-1}$ is an NIS
operator of order $2$ in $U$.

One may write $\dbarb$ and $\dbarb^{*}$ in terms of $\overline{L_j}$ (see
(2.6) and (2.7) of \cite{KoenigOnMaixmalSobolevAndHolderEstimates})
and with this a proof almost exactly the same as the one above for Theorem \ref{ThmFiniteProp}
shows that
$$\supp \l(K_{\cos\l(t\sqrt{\sL}\r)}\r)\cap U\times U \subseteq\l\{ \l(x,y\r)\in U\times U: \rp{x}{y}\leq \kappa t \r\}$$
for some fixed constant $\kappa$.  The results in Section \ref{SectionResults}
hold with the following modifications:
\begin{itemize}
\item The bounds in Theorem \ref{ThmMainBounds} and Corollary \ref{CorCleanerMainBound} hold for $x,y\in U$.
\item In Theorem \ref{ThmHeatOpsAreNIS} and Theorem \ref{ThmMultipliers}
``NIS operators'' must be replaced with ``NIS operators in $U$.''
\item In Theorem \ref{ThmMultipliersLessSmooth}, we consider $m\l( \sL\r)$
taking $L^p\l(U\r)\rightarrow L^p\l(U\r)$.
\end{itemize}
The proofs are essentially the same as the ones in this paper, however
one must work with operators on forms as in \cite{SikoraRieszTransformGaussianBoundsAndTheMethodOfWaveEquation} and Example \ref{ExampleSik}.  We 
leave the details to the reader.

	\subsection{Example: Polynomial Model Domains}\label{ExamplePolyModel}
		In this example, we discuss the other case treated in
\cite{NagelSteinTheBoxbHeatEquationOnPseudoconvexManifoldsOfFiniteType}.
In this case, there is a subharmonic, nonharmonic polynomial $h:\C\rightarrow\R$
such that
$$M=\l\{ \l( z,w\r)\in \C^2: \Im\l( w\r)=h\l(z\r) \r\}$$
We define $\boxb$ as in that reference, and refer the reader
there for details.  All of the results in Section \ref{SectionResults}
hold without any changes in the statement of the results, however,
we must address a few differences in the proofs.

The analog of Theorem \ref{ThmFiniteProp} follows just as before,
just by using the smooth metric constructed for this case
in Section 4 of \cite{NagelSteinDifferentiableControlMetricsAndScaledBumpFunctions}.  %Alternatively, one can use the methods of \cite{MelrosePropagationForTheWaveGroupOfAPositiveSubellipticSecondOrderDifferentialOperator}
%along with a homogeneity argument.

The main differences between this case and the case treated above are (the closely related facts)
that $\boxbt^{-1}$ is not bounded on $L^2$ and that the spectrum
of $\boxb$ is not discrete.  The fact that $\boxbt^{-1}$ is not
bounded on $L^2$ can be worked around by using that we may take
$R_0=\infty$ in Theorem \ref{ThmScaledEst}.

That the spectrum of $\boxb$ is not discrete forces us to have a
replacement for Lemma \ref{LemmaSumElemOps}.  Indeed, we replace it
with the same result but with $N_0=\infty$.  This is proven
in a similar manner, by using the fact that (in this case) 
we may take $\delta_0$ in Proposition \ref{PropHomogType} to
be $\infty$.
%there exists a $q>0$
%such that
%$$V\l( x , 2^j\r) \gtrsim 2^{jq} V\l( x, 1\r)$$
%for every $j$.  
At this point, all of the proofs go through
with only minor changes.  We leave the details to the reader.
For some related results, see \cite{RaichPointwiseEstimatesForRelativeFundamentalSolutionsOfHeatEquations}.

%	\subsection{Example: Sublaplacian (and other operators) on a Compact Manifold}
%		\input{exsublapcpt}

	\subsection{Example: Operators on a Compact Manifold, Defined by Vector Fields}\label{ExampleCptMfld}
		For this example, let $M$ be a compact Riemannian manifold, and let $X_1,\ldots, X_n$ be vector fields satisfying H\"ormander's condition.
Let $\sL$ be an second order, self adjoint, polynomial in the
vector fields $X_1,\ldots,X_n$.  Using these vector fields, we obtain a 
metric $\rho$, as in Section \ref{SectionGeomOfM}.
We assume that $\sL$ has the following properties:
\begin{itemize}
\item The $\sL$ wave operator, $\cos\l( t\sqrt{\sL}\r)$, has finite
propagation speed.  That is, it satisfies the conclusion of
Theorem \ref{ThmFiniteProp}.
\item The relative fundamental solution, $\sLt^{-1}$, of $\sL$
is an NIS operator of order $2$.
\end{itemize}
Then, all of the results in Section \ref{SectionResults} remain true
with $\sL$ in place of $\boxb$, with essentially the same proofs.
\begin{rmk}
Actually, that $\sL$ be of second order is inessential.  We leave
such generalizations to the reader.
\end{rmk}

In particular, all of the proofs in this paper work with $\sL$ equal
to the sublaplacian:
$$\sL=X_1^{*}X_1+\cdots+X_n^{*}X_n$$
where we identify $\sL$ with its Friedrich extension.  
The finite propagation speed follows just as in the proof of
Theorem \ref{ThmFiniteProp}.
In this case,
$\pi$ is just the projection onto the constant functions.
Note that, since $e^{-t\sL}= e^{-t\sLt}+\pi$, bounds for $e^{-t\sL}$
and $e^{-t\sLt}$ are essentially the same.  In the case
of the sublaplacian, though, most of the results in this paper
are quite well known.  For instance, Theorem \ref{ThmFiniteProp}
can be found in \cite{MelrosePropagationForTheWaveGroupOfAPositiveSubellipticSecondOrderDifferentialOperator}, Corollary \ref{CorCleanerMainBound}
can be found in \cite{JerisonSanchezCalleEstimatesForTheHeatKernelForASumeOfSquares}, and Theorem \ref{ThmHeatOpsAreNIS} was implicitly proven in \cite{NagelSteinTheBoxbHeatEquationOnPseudoconvexManifoldsOfFiniteType}.

%	\subsection{Example: Quasi-homogeneous Vector Fields}
%		\input{exquasihomogold}

	\subsection{Example: Quasi-homogeneous Vector Fields}\label{ExampleQuasiHomog}
		In this example, we let $X_1,\ldots, X_n$ be vector fields on $M=\R^d$
satisfying H\"ormander's condition, and which are homogeneous
of degree $1$ with respect to a one parameter family of dilations
on $\R^d$.  An example would be the left invariant vector fields of
degree $1$ on a stratified group.  Let $\sL$ be a second order,
self-adjoint, homogeneous polynomial in $X_1,\ldots,X_n$, and
assume that $\sL$ satisfies the same two assumptions as in
Example \ref{ExampleCptMfld}.  Then, all of the results in
Section \ref{SectionResults} go through with proofs almost
exactly the same as Example \ref{ExamplePolyModel}.  %In fact,
%Example \ref{ExamplePolyModel} is a special case of this.

Just as in Example \ref{ExampleCptMfld}, the sublaplacian:
$$\sL=-X_1^2-\cdots-X_n^2$$
is a special case of this.  The finite propagation
of the wave equation may be verified for $x,y$ in a fixed compact
neighborhood of $0$ by the same proof as in Theorem \ref{ThmFiniteProp}, and
then extended to all $x,y$ by homogeneity.  
In this case, $\pi=0$, and so
$e^{-t\sL}$ and $e^{-t\sLt}$ satisfy the same bounds.  Just
as in Example \ref{ExampleCptMfld}, these results are well known.
In particular, in the case of the sublaplacian on a stratified group,
all of the results in this paper can be improved, and are quite well known.  See
\cite{FollandSteinHardySpacesOnHomogeneousGroups,ChristLpBoundsForSpectralMultipliersOnNilpotentGroups,AlexopoulosSpectralMultipliersOnLigeGroupsOfPolynomialGrowth}, and references therein.

\bibliographystyle{amsalpha}

\bibliography{heat}

\end{document}